\documentclass[10pt,a4paper,reqno]{amsart} 
%%%%%%%%%%%%%%%%%%%%%%%%%%%%%%%%%%%%%%%%%%%%%%%%%%%%%%%%%%%%%%%%%
% Packages
%%%%%%%%%%%%%%%%%%%%%%%%%%%%%%%%%%%%%%%%%%%%%%%%%%%%%%%%%%%%%%%%%

\usepackage{amsmath}
\usepackage{bbm}
\usepackage{mathtools}
\usepackage{amssymb}
\usepackage{graphicx}
\usepackage{color}
\usepackage{latexsym}
\usepackage{comment}
\usepackage[english]{babel} 
\usepackage{stmaryrd}
\usepackage{amssymb} 
\usepackage[mathcal]{eucal}
\usepackage{fancybox}
\usepackage{pifont, xcolor, tikz}
\usepackage{palatino}

\newcommand{\pd}[2]{\frac{\partial#1}{\partial#2}}

\newcommand{\fig}[3]{
\begin{figure}[ht!]
\begin{center}
 \includegraphics #1
 \end{center}
\vspace{-7pt}
\caption{ #2}
\label{#3}
\end{figure}
}

\newcommand{\ens}[1]{\left\{#1\right\}}

\newtheorem{prop}{Proposition}

\newtheorem{lem}[prop]{Lemma}
\newtheorem{defi}[prop]{Definition}

\newtheorem{theo}[prop]{Theorem}

\newtheorem{cor}[prop]{Corollary}

%-------------------------------------------------
% tikz classes
%-------------------------------------------------
\usetikzlibrary{positioning}
\usetikzlibrary{decorations.pathreplacing}

% Define "partition" as a tikz style:
\tikzset{partition/.style={fill,circle,inner sep=1pt},
         part/.style={baseline=0,scale=0.5,bend left=45},
         partlabel/.style={below}}
\usetikzlibrary{shapes,arrows}
\tikzstyle{pnt}=[draw,ellipse,fill,inner sep=1pt]

%%%%%%%%%%%%%%%%%%%%%%%%%%%%%%%%%%%%%%%%%%%%%%%%%%%%
%     New style for sections and subsections
%%%%%%%%%%%%%%%%%%%%%%%%%%%%%%%%%%%%%%%%%%%%%%%%%%%%%
\catcode`\@=11
\def\section{\@startsection{section}{1}%
 \z@{.7\linespacing\@plus\linespacing}{.5\linespacing}%
 {\normalfont\bfseries\scshape\centering}}

\def\subsection{\@startsection{subsection}{2}%
  \z@{.5\linespacing\@plus\linespacing}{.5\linespacing}%
  {\normalfont\bfseries\scshape}}

\def\subsubsection{\@startsection{subsubsection}{3}%
 \z@{.5\linespacing\@plus\linespacing}{-.5em}%{.5\linespacing}%
  {\normalfont\bfseries\itshape}}
\catcode`\@=12

%%%%%%%%%%%%%%%%%%%%%%%%%%%%%%%%%%%%%%%%%%%%%%%%%%
% The size of the page
%%%%%%%%%%%%%%%%%%%%%%%%%%%%%%%%%%%%%%%%%%%%%%%%%%
%
\addtolength{\textheight}{-1mm} \topmargin5mm
\addtolength{\textwidth}{20mm} 
\hoffset -6mm

\parskip = 4pt
\addtolength{\voffset}{-0.3cm}

%%%%%%%%%%%%%%%%%%%%%%%%%%%%%%%%%%%%%%%%%%%%%%%%%%%%%%%%%%%%

\selectlanguage{english}
\title{Terminal chords in connected chord diagrams} 
\author{Julien Courtiel and Karen Yeats}
\thanks{JC is supported by a PIMS postdoctoral fellowship; KY is supported by an NSERC discovery grant. JC is grateful to Andrea Sportiello, Valentin Bonzom and Olivier Bodini for interesting discussions.}

\begin{document}
\maketitle

	\begin{abstract}		
	Rooted connected chord diagrams
%	, or equivalently connected matchings of $\{1,\dots, 2n\}$, 
	form a nice class of combinatorial objects.  Recently they were shown to index solutions to certain Dyson-Schwinger equations in quantum field theory.  Key to this indexing role are certain special chords which are called terminal chords.  Terminal chords provide a number of combinatorially interesting parameters on rooted connected chord diagrams which have not been studied previously.  Understanding these parameters better has implications for quantum field theory.		
			
	Specifically, we show that the distributions of the number of terminal chords and the number of adjacent terminal chords are asymptotically Gaussian with logarithmic means, and we prove that the average index of the first terminal chord is $2n/3$.  Furthermore, we obtain a method to determine any next-to${}^i$ leading log expansion of the solution to these Dyson-Schwinger equations, and have asymptotic information about the coefficients of the log expansions.  		
	\end{abstract}

\section{Introduction}
In this paper we are interested in looking at the asymptotic behaviour of some rich and interesting, but somewhat unusual parameters on the combinatorial class of rooted connected chord diagrams.  Specifically, we are interested in certain chords known as \emph{terminal chords} which form the base case for a recursive decomposition of rooted connected chord diagrams and the indices of the terminal chords in a recursive ordering of the chords.  The reason for investigating these parameters is that they arose in \cite{MYchord} in series solutions to certain Dyson-Schwinger equations in quantum field theory.  In order to derive meaningful physics from these series solutions we need to better understand the asymptotics of these parameters.  The present paper is a first step towards this understanding.  Furthermore the combinatorics of these objects is interesting in its own right and these particular parameters are largely uninvestigated so far.

\subsection{Combinatorial setting}

Before explaining the physics context, let us set up what we need for chord diagrams. 

\begin{defi} A \emph{perfect matching} of a finite set $S$ is a set of pairs of $S$ such that every element of $S$ is in exactly one pair.  A \emph{chord diagram} with $n$ chords is a perfect matching of $\{1,2,\ldots, 2n\}$.   The \emph{root chord} of a chord diagram is the pair including $1$.
\end{defi}

As implied by the name, it is convenient to represent chord diagrams with dots and chords. Two conventions coexist in the literature: the circular one and the linear one. They respectively consist in drawing points $1,2,\ldots,2n$ on a circle in counterclockwise order, or on a line from left to right, and joining by a chord every two elements belonging to the same pair. The matching $\ens{\{1,4\},\{2,6\},\{3,5\}}$ has been drawn in these two ways in Figure \ref{fig:convention}.  The circular convention has been used in the previous papers, like \cite{MYchord}, but we are going to adopt here the linear convention for the rest of the document.

\fig{{C1}\quad \quad \quad \begin{tikzpicture}[scale=0.7]
\foreach \x in {1,2,...,6}{
\node[pnt,label=below:{$\x$}](\x) at (\x,0){};}
\draw(1)  to [bend left=45] (4);
\draw(3)  to [bend left=45] (5);
\draw(2)  to [bend left=45] (6);
\end{tikzpicture}}{\textit{Left:} circular convention. \textit{Right:} linear convention.}{fig:convention}

\begin{defi}
 The \emph{oriented intersection graph} of a  chord diagram $C$ is the digraph with a vertex for each chord of $C$ and an oriented edge from chord $\{a,b\}$ to chord $\{c,d\}$ whenever $a<c<b<d$.
  A chord diagram is \emph{connected} if its oriented intersection graph is connected.
   A chord is \emph{terminal} if its vertex in the oriented intersection graph has no outgoing edges.
\end{defi}
For instance, the oriented intersection graph of the chord diagram of Figure~\ref{fig:convention} is the tree where $\ens{1,4}$ is the root vertex, and $\ens{2,6},\ens{3,5}$ its two children.
This chord diagram is connected, and the terminal chords are $\{2,6\}$ and $\{3,5\}$.

The chords inherit an order by the smaller of their endpoints. 
% We call this the \emph{counterclockwise order} but 
This is not the order that we want to be working with.

\begin{defi}
The \emph{intersection order} of the chords of a rooted connected chord diagram $C$ is defined as follows.
\begin{itemize}
  \item The root chord of $C$ is the first chord in the intersection order.
  \item Remove the root chord of $C$ and let $C_1, C_2, \ldots, C_n$ be the connected components of the result ordered by their first vertex.
  \item For the intersection order of $C$, after the root chord come all the chords of $C_1$ ordered inductively in the intersection order, then all the chords of $C_2$ ordered by intersection order, and so on.
\end{itemize}
\end{defi}

The chord diagram of Figure \ref{fig:intersection} is an example of a chord diagram where the intersection order is different from the order by the smaller of their endpoints.

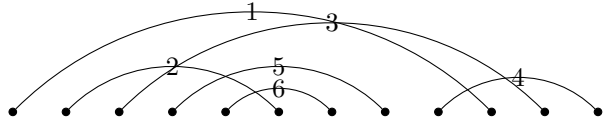
\begin{figure}[ht!]
\begin{center}
\begin{tikzpicture}[scale=0.7]
\foreach \x in {1,2,...,12}{
\node[pnt](\x) at (\x,0){};}
\draw(1)  to [bend left=45]  node [midway] {$1$}  (10);
\draw(3)  to [bend left=45] node [midway] {$3$} (11);
\draw(5)  to [bend left=45] node [midway] {$6$} (7);
\draw(2)  to [bend left=45] node [midway] {$2$} (6);
\draw(9)  to [bend left=45] node [midway] {$4$} (12);
\draw(4)  to [bend left=45] node [midway] {$5$} (8);
\end{tikzpicture}
% \quad \quad \quad \begin{tikzpicture}[scale=0.7]
%\foreach \x in {1,2,...,8}{
%\node[pnt](\x) at (\x,0){};}

%\draw(1)  to [bend left=45] (4);
%\draw(3)  to [bend left=45] (6);
%\draw(5)  to [bend left=45] (7);
%\draw(2)  to [bend left=45] (8);
%\end{tikzpicture}
 \end{center}
\vspace{-7pt}
\caption{Example of a connected chord diagram and its intersection order.}
\label{fig:intersection}
\end{figure}

Our primary interest is in the terminal chords and their indices in intersection order.  We are interested in questions such as
\begin{itemize}
  \item How many terminal chords does a chord diagram have?
  \item What is the index of the first terminal chord?
  \item How many pairs of terminal chords are adjacent in the intersection order?
  \item What can we say about the gaps between indices of successive terminal chords in intersection order?
\end{itemize}
Now we are going to explain why these questions are relevant from a physical point of view.

\subsection{Physical background}

Dyson-Schwinger equations are an important class of equations in quantum field theory.  They are the quantum analogues of the classical equations of motion.  They are usually written as integral equations and their recursive structure mirrors the decomposition of Feynman graphs into subgraphs.  

Because of this recursive structure there is another, more combinatorial way to think about them.  Namely, they are functional equations for a sort of weighted generating function.  More specifically, the Green functions of a quantum field theory can be thought of as the sum over all Feynman graphs of the theory satisfying certain properties (for example graphs which are 1 particle irreducible -- that is 2-edge-connected -- and have a fixed set of external edges) and weighted by their Feynman integrals.  Thus the Green functions are weighted generating functions of Feynman graphs with highly nontrivial weights.  The Green functions are solutions to the Dyson-Schwinger equations, or, viewed the other way around, the Dyson-Schwinger equations are certain functional equations for these weighted generating functions.

\medskip

In \cite{MYchord} one of the authors along with Nicolas Marie looked at one particular family of Dyson-Schwinger equations given below in \eqref{DSE}.  This family of Dyson-Schwinger equations corresponds to the physical situation where we consider all graphs made by inserting a fixed one loop propagator graph into itself in one insertion place.  Combinatorially this means that the graphs we are interested in are in bijection with plane rooted trees.
% given by the specification
%\[
%  \mathcal{G} = \mathcal{Z}\times \textsc{Seq}(\mathcal{G})
%\]   
%***Julien, please rephrase this however you like your specifications best.*** 
For instance, inserting the graph
\[
\includegraphics{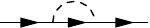}
\]
into itself in all possible ways gives a class of graphs which fits into this situation.  One example from this class is 
\[
\includegraphics{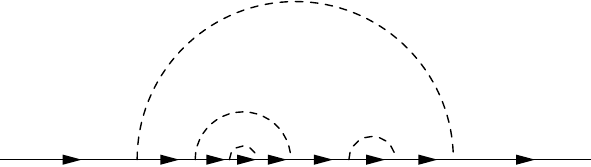}
\]
which corresponds to the rooted tree
\[
\includegraphics{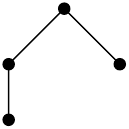}.
\]

The Dyson-Schwinger equations considered in \cite{MYchord} are those which can be written in the following form
\begin{equation}\label{DSE}
 G(x,L) = 1-xG\left(x,\frac{\partial}{\partial (-\rho)}\right)^{-1} (e^{-L\rho} -1)F(\rho) |_{\rho=0}
\end{equation}
where $F(\rho)$ is the Laurent expansion of a regularized Feynman integral for the one loop graph which generated the graph class in question.  Note that $G(x,\partial/\partial (-\rho))^{-1}$ is acting as a differential operator on $(e^{-L\rho} -1)F(\rho)$.  Analytically there are subtleties since such an operator is only a pseudo-differential operator.  However, we are concerned here solely with series and so interpreting \eqref{DSE} as an equation in formal series everything is well-defined.

For the specific example graphs given above, viewed in Yukawa theory, the equation was solved by Broadhurst and Kreimer in \cite{bkerfc}.  In this case $F(\rho) = 1/(\rho(1-\rho))$.  They didn't write the Dyson-Schwinger equation in the form of \eqref{DSE} but rather in a more usual physical form as an integral equation (see Examples 3.5 and 3.7 of \cite{Ymem} for how to convert to the form above).  However, for the present purposes, we can simply take \eqref{DSE} as the starting point.

\medskip

The main result of \cite{MYchord} is a series solution to \eqref{DSE} indexed by rooted connected chord diagrams.  

\begin{theo}[Theorem 4.13 of \cite{MYchord}]
Suppose $F(\rho) = \sum_{i\geq 0} f_i \rho^{i-1}$.  Given a rooted connected chord diagram $C$ let the indices of its terminal chords in intersection order be $b(C) = t_1 < t_2 < \cdots < t_k$.
Then
\begin{equation}\label{sol}
G(x,L) = 1 - \sum_{i \geq 1}\frac{(-L)^i}{i!}\sum_{\substack{C\\b(C) \geq i}}x^{|C|}f_{b(C)-i}f_0^{|C|-k}\prod_{j = 2}^{k} f_{t_j-t_{j-1}}
\end{equation}
solves \eqref{DSE}, where the sum is over rooted connected chord diagrams with the indicated restriction.
\label{theo:MYchord}
\end{theo}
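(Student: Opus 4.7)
The plan is to substitute \eqref{sol} into \eqref{DSE} and verify equality, exploiting the recursive decomposition of rooted connected chord diagrams. Uniqueness of a formal solution with $G(x,L) = 1 + O(x)$ is clear by induction on the degree in $x$, so only existence needs to be proved.

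First I would unpack the right-hand side of \eqref{DSE}. A direct computation using $(\partial/\partial(-\rho))^m \rho^n|_{\rho=0} = (-1)^m m!\,\delta_{n,m}$ gives
\begin{equation*}
\left(\frac{\partial}{\partial(-\rho)}\right)^m \bigl[(e^{-L\rho}-1)F(\rho)\bigr]\bigg|_{\rho=0} = (-1)^m m! \sum_{i=1}^{m+1}\frac{(-L)^i}{i!}f_{m+1-i}.
\end{equation*}
Writing $G^{-1}(x,L) = \sum_{m\geq 0}g_m(x) L^m$ and treating the coefficients $f_0, f_1, \ldots$ as independent formal variables, comparing coefficients of $L^i$ in the DSE reduces the theorem to the family of identities
\begin{equation*}
\sum_{\substack{C\\b(C) = m+1}} x^{|C|-1} f_0^{|C|-k} \prod_{j=2}^{k} f_{t_j-t_{j-1}} = (-1)^m m!\,g_m(x) \qquad (m\geq 0),
\end{equation*}
with the sum over rooted connected chord diagrams whose first terminal chord has position $m+1$ in intersection order.

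Second, I would expand $G^{-1} = \sum_{p\geq 0}(1-G)^p$ and substitute \eqref{sol} for each factor. This realises $(-1)^m m!\,g_m(x)$ as a sum over ordered tuples $(C_1, \ldots, C_p)$ of rooted connected chord diagrams together with integers $i_1, \ldots, i_p \geq 1$ satisfying $i_s \leq b(C_s)$ and $\sum_s i_s = m$. On the combinatorial side, removing the root chord of a rooted connected chord diagram $C$ with $|C| \geq 2$ produces an ordered sequence of connected components $(C_1, \ldots, C_p)$, $p \geq 1$, with $|C| = 1 + \sum|C_s|$ and $b(C) = 1 + b(C_1)$. The left-hand side can thus be reorganised as a sum over the same tuples.

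The main difficulty is to show that the two sums match term by term. Gaps $t_j - t_{j-1}$ lying inside a single component $C_s$ are accounted for by induction on $|C|$, but at ``junctions'' between consecutive components $C_{s-1}$ and $C_s$ the gap must be produced by $i_s$ together with $|C_{s-1}|$ and $b(C_s)$; concurrently, the factors $f_0^{|C|-k}$ for the non-terminal chords must come out of the $i_s$ powers of $-L$ and the exponent constraints $i_s \leq b(C_s)$. Verifying that these shifts conspire correctly is the combinatorial crux; once established, induction on $|C|$ closes the argument.
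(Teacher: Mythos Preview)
This theorem is not proved in the present paper; it is quoted from \cite{MYchord} as background and used as a starting point, so there is no proof here to compare your attempt against.

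On the substance: your overall strategy---substitute the chord-diagram expansion, expand $G^{-1}=\sum_{p\geq 0}(1-G)^p$, and match against the decomposition of a rooted connected chord diagram by removing the root chord---is the natural one, and your reduction to the family of identities
\[
\sum_{b(C)=m+1} x^{|C|-1} f_0^{|C|-k}\prod_{j=2}^{k} f_{t_j-t_{j-1}} \;=\; (-1)^m m!\,g_m(x)
\]
is correct. The gap is exactly where you locate it, and it is genuine. The map $C\mapsto (C_1,\ldots,C_p)$ given by root removal is many-to-one: reconstructing $C$ requires specifying how the root chord threads through the components, so the left-hand side carries an implicit multiplicity that must be matched against the multinomial weight $m!/\prod_s i_s!$ and the constraints $i_s\leq b(C_s)$ appearing on the right. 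Moreover, a direct comparison of the $f$-monomials does not line up term by term: on the left the junction between $C_{s-1}$ and $C_s$ contributes the factor $f_{b(C_s)}$ (the gap between the last chord of $C_{s-1}$ and the first terminal chord of $C_s$), whereas on the right the factor attached to $C_s$ is $f_{b(C_s)-i_s}$ with $i_s\geq 1$, so the naive correspondence fails and a non-obvious reorganisation is needed. In \cite{MYchord} this is carried out not through bare root removal but via an auxiliary insertion operation on chord diagrams and a layered induction occupying several sections. Your final paragraph asserts that ``these shifts conspire correctly'' without supplying any mechanism; as written the proposal is an outline of the shape of the argument, with the substantive combinatorics still missing.
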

Note that $k$ and the $t_j$ depend on $C$ in \eqref{sol}, but this has been left implicit to keep the notation from getting too heavy.
More general Dyson-Schwinger equations have similar chord diagram expansions (see \cite{HYchord}).

To better understand $G(x,L)$ we see from \eqref{sol} that the keys are to understand the number of terminal chords, the index of the first terminal chord, and the differences between indices of successive terminal chords.

\subsection{Structure of the document}

 In Section \ref{sec:eb}, we set up the enumerative background of the present article. We introduce what is known on exact and asymptotic enumeration of connected chord diagrams. We also give some results which we use in this article: first we cite two theorems from the theory of analytic combinatorics; then we establish an asymptotic expansion of the ratio $c_{n}/c_{n-1}$, where $c_n$ is the number of connected chord diagrams.

In Section \ref{sec:llterms}, we study the leading-log expansion and next-to${}^i$-leading log expansions of the solution of~\eqref{DSE}.  These are a way of organizing the double expansion of $G(x,L)$ and are relevant in quantum field theory. To do so, we are led to enumerate the connected chord diagrams $C$ such that the first terminal chord is close to the last chord. We give recurrences that characterize these numbers and their exponential generating functions. We then establish asymptotic estimates on them. There will be implications on the physical level: we show that the dominant terms in all the log expansions only involve $f_0$ and $f_1$, respectively the residue and the constant term of $F(\rho)$.

In Section \ref{sec:stat}, we study parameters on uniform large connected chord diagrams. We state a generic theorem that shows that numerous laws on connected chord diagrams obey to a Gaussian law, with a mean of the form $\lambda n + \mu \ln n$ and a logarithmic variance. In particular, we prove that the average number of terminal chords is $\ln n$. Finally, we see that the average index of the first terminal chord, a parameter which does not satisfy the above-mentioned theorem, is $2n/3$. The methods used are new and interesting as combinatorics.

\section{Enumerative background} \label{sec:eb}

\subsection{A brief historical background on connected chord diagrams}

Chord diagrams and their enumeration are not only relevant in quantum field theory; they also appear in various other areas of mathematics: knot theory \cite{Stoimenow,bori2000,zagier2001} (in particular the Vassiliev invariants), graph sampling \cite{acan2013}, analysis of computer structures \cite{flajolet-datastructure}, and even bioinformatics \cite{combRNA, andersen}.
% (chord diagrams encode the RNA secondary structure). 
% In this context, the enumeration of these objects has been the subject of numerous studies.

Concerning more particularly the \emph{connected} chord diagrams, Touchard seems to be the first person in 1952 to be interested in their enumeration \cite{touchard}. More precisely, he characterized the number of connected diagrams with $n$ chords and $m$ crossings as a solution of a system of equations. 
Subsequently, Stein provided an explicit recurrence relation for the number of connected chord diagrams (but without considering this time the number of crossings) \cite{stein}, as stated in the following proposition.

\begin{prop}[Stein \cite{stein}]
Let $c_n$ be the number of connected diagrams with $n$ chords.  These numbers satisfy the relations $c_1 = 1$ and for $n \geq 2$,
\begin{equation}
c_n = (n-1)  \, \sum_{k=1}^{n-1} c_k\,c_{n-k}.
\label{eq:rec}
\end{equation}
\end{prop}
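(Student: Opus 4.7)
The plan is to prove Stein's recurrence by a bijective argument matching each rooted connected chord diagram of size $n$ (for $n \geq 2$) with a triple $(j, D_1, D_2)$ where $j \in \{1, \ldots, n-1\}$ and $(D_1, D_2)$ is an ordered pair of rooted connected sub-diagrams with $|D_1| + |D_2| = n$ and both sizes at least one.

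First, I would exploit the recursive structure already implicit in the intersection order: removing the root of a rooted connected diagram yields an ordered list of connected components, each of which must cross the root in order for the original diagram to be connected. This recursive scaffolding is what lets us extract a pair $(D_1, D_2)$ from a connected diagram $C$ — intuitively, $D_1$ collects the root together with some initial portion of the intersection-order decomposition, and $D_2$ collects the remaining chords, with both relabeled by the relative order of their endpoints.

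Second, I would define the inverse construction: starting from a triple $(j, D_1, D_2)$, merge the two diagrams into a single chord diagram of size $n$ by interleaving their point sets in a way controlled by $j$. The marker picks out which chord of $D_1$ is used to ``bridge'' into $D_2$, i.e.\ to cross at least one chord on the $D_2$ side, which is exactly what is needed to ensure connectivity of the merged diagram in the sense of the intersection graph.

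The main obstacle is making the decomposition and the merging precise enough that they are mutually inverse and that every triple genuinely yields a connected diagram. A small-case check at $n = 3$ (where $c_3 = 4$) already shows that naive decompositions — e.g.\ cutting at the position of the first terminal chord in the intersection order, or simply peeling off the last connected component of $C$ minus its root — do not distribute evenly over pairs $(D_1, D_2)$, so the correct bijection is subtler than a direct chord-set partition and really does require the additional positional datum $j$. An alternative route, if a clean bijection proves elusive, is to translate the recurrence into the first-order ODE $2xAA' = A + A^2 - x$ for the ordinary generating function $A(x) = \sum_{n \geq 1} c_n x^n$ (an elementary rearrangement shows the two statements are equivalent) and derive this ODE directly from a recursive specification of rooted connected chord diagrams.
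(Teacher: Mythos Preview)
The paper does not actually prove this proposition: it is stated with attribution to Stein, and the surrounding text only remarks that Nijenhuis and Wilf later gave a constructive combinatorial proof. So there is no ``paper's own proof'' to compare against.

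Assessing your proposal on its own merits, it is a plan rather than a proof, and the central step is missing. You correctly identify that a bijection should go between connected diagrams of size $n$ and triples $(j, D_1, D_2)$ with $j \in \{1,\dots,n-1\}$ and $|D_1|+|D_2|=n$, and you correctly observe (via the $n=3$ check) that the obvious candidate maps fail. But you never actually define either the forward map or the inverse: phrases like ``$D_1$ collects the root together with some initial portion of the intersection-order decomposition'' and ``interleaving their point sets in a way controlled by $j$'' are not specifications. The Nijenhuis--Wilf bijection is genuinely delicate, and nothing in your sketch pins it down. Without an explicit map there is no way to verify that the merged diagram is connected or that the two directions invert each other, so as written this is a description of the \emph{shape} a proof should have, not a proof.

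Your fallback via the ODE $2xAA' = A + A^2 - x$ is correctly derived as equivalent to the recurrence, but you have only shown that equivalence, not that either side holds. To finish along these lines you would still need a recursive specification of connected diagrams that translates into this ODE. A cleaner target is the equivalent form~\eqref{eq:rec2}: the decomposition used later in the paper (Proposition~\ref{prop:rec}) --- remove the root chord and, if several components result, peel off the last one, recording where the root re-enters --- yields exactly the $(2k-1)$ weight, and the symmetrization $k \leftrightarrow n-k$ then recovers~\eqref{eq:rec}. That would be a complete argument; your current text stops short of it.
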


This formula has also been shown by Nijenhuis and Wilf, but this time thanks to a constructive combinatorial proof \cite{nijenhuis-wilf}. Let us also mention than \eqref{eq:rec} is equivalent to
\begin{equation}
c_n =  \sum_{k=1}^{n-1} (2\,k-1)\, c_k\,c_{n-k}.
\label{eq:rec2}
\end{equation}

As for the asymptotic behaviour of the number $c_n$ of connected diagrams with $n$ chords, Stein and Everett gave the estimate
\[
c_n \sim \frac 1 e (2\,n-1)!!
\]
 in \cite{stein-everett}. In particular, since the number of (non necessarily connected) diagrams with $n$ chords is $(2n-1)!!$, this implies that a large random chord diagram is connected with a probability $e^{-1}$. 
 
Some decades later, Flajolet and Noy  refined this result. Indeed, they proved in \cite{flajolet-noy} that the number of connected components in a large random chord diagram (minus $1$) follows a Poisson law of parameter $1$. Moreover, they showed that if $L_n$ denotes the size of the largest component in a random diagram with $n$ chords, then $n - L_n$ is also distributed like a Poisson law of parameter $1$. 

Recently, Michael Borinsky computed in~\cite{michi} an asymptotic expansion of the number of connected diagrams $c_n$, along with cardinals of similar objects.

\subsection{Preliminaries on analytic combinatorics}

The majority of our proofs are based on the reference book by Flajolet and Sedgewick \cite{flajolet-sedgewick}. We present here the two main analytic combinatorics theorems of this paper.

First of all, let us mention that we use in this document two different notions of generating function. Given a sequence $a_n$, the \textit{ordinary generating function} of the numbers $a_n$ is defined as $\sum_{n \geq 0} a_n z^n$, while the \textit{exponential generating function} is defined as $\sum_{n \geq 0} a_n z^n/n!$. Both notions have their advantages and drawbacks, especially when we try to enumerate chord diagrams. That is why we will juggle the two notions.

The first theorem, maybe the most representative of the theory, is called the \textit{transfer theorem}. It relates the singular expansion of the series and the asymptotic behaviour of its coefficients.

\begin{theo} [Transfer Theorem]  \label{theotransfer}
Consider $\Delta$ a complex domain of the form
$$ \left\{ z \ \  \left| \ \  |z| < \rho + \varepsilon \right. \right\} \ \bigcap \  \left\{ z\ \  \left| \ \   {|\textrm{Arg}(z-\rho)|} > \alpha   \right. \right\},$$
with $\varepsilon > 0$ and $\alpha \in (0,\pi/2)$.
%(see Figure \ref{deltadomain}).
%\fig{[scale = 1.2]{deltadomain}}{A typical domain for Theorem~\ref{theotransfer}.}{deltadomain}

Let $f(z) = \sum_{n \geq 0} f_n z^n$ be an analytic function on $\Delta$. If the singular behaviour of $f$ in the vicinity of $\rho$ is 
\[f(z) \underset{z \rightarrow \rho}\sim c \, (1-z/\rho)^{-\alpha} \ln(1-z/\rho)^{\beta},\]
where $\alpha$ is a complex number which does not belong to $\mathbb Z_{<0}$, $\beta$ any integer and $c$ a non zero constant, then
\[f_n \sim \frac{c}{\Gamma(\alpha)} \rho^{-n} n^{\alpha - 1} (\ln n)^\beta.\]
\end{theo}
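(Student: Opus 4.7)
The plan is to apply the classical Hankel contour argument of Flajolet--Odlyzko. By the scaling $z \mapsto \rho z$, I would first reduce to the case $\rho = 1$, which extracts the prefactor $\rho^{-n}$. From Cauchy's integral formula,
\[
f_n = \frac{1}{2\pi i}\oint \frac{f(z)}{z^{n+1}}\, dz,
\]
one may deform the contour inside $\Delta$ to a Hankel-type contour $\gamma_n$ with four pieces: a small circle of radius $1/n$ around $z = 1$; two rectilinear segments leaving it at angles $\pm\theta$ (with $\theta$ chosen strictly between the opening angle of $\Delta$ and $\pi/2$); and an arc of a larger circle of radius $r > 1$, all lying in $\Delta$.

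The integral over the large arc is $O(r^{-n})$ and hence exponentially negligible. The essential contribution comes from the part of $\gamma_n$ near $z = 1$. Performing the substitution $z = 1 - t/n$ sends this portion to a Hankel contour $H$ in the $t$-plane, and one has $(1-z)^{-\alpha} = (t/n)^{-\alpha}$, $z^{-n-1} \to e^{t}$, $\ln(1-z) = \ln t - \ln n$, and $dz = -dt/n$. Substituting these relations into the model singular term $c(1-z)^{-\alpha}\ln(1-z)^{\beta}/z^{n+1}$ gives, after collecting powers of $n$ and up to sign and branch conventions,
\[
\frac{c\, n^{\alpha - 1}}{2\pi i}\int_H (-t)^{-\alpha} e^{t} (\ln t - \ln n)^{\beta}\, dt.
\]
Expanding the log factor binomially isolates $(-\ln n)^{\beta}$ as the dominant contribution, and Hankel's contour integral representation
\[
\frac{1}{\Gamma(\alpha)} = \frac{1}{2\pi i}\int_H (-t)^{-\alpha} e^{t}\, dt
\]
then yields the announced equivalent $f_n \sim \frac{c}{\Gamma(\alpha)}\rho^{-n} n^{\alpha - 1}(\ln n)^{\beta}$. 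The residual terms from the binomial expansion produce contributions involving lower powers of $\ln n$, hence of smaller order.

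The main obstacle is controlling the error between $f(z)$ and its singular model $c(1-z)^{-\alpha}\ln(1-z)^{\beta}$: this equivalence only holds as $z \to 1$, so to promote it to the level of coefficients one must bound the difference uniformly along $\gamma_n$ and show that the resulting integral is of strictly smaller order than the main term. The standard strategy is to split $\gamma_n$ into a shrinking neighborhood of $z = 1$, where the local asymptotic relation supplies the needed error estimate, and the complementary part, where $f$ is bounded by analyticity while $|z^{-n-1}|$ decays quickly enough to absorb the loss. A secondary technical point is verifying that the principal branches of $(1-z)^{-\alpha}$ and $\ln(1-z)$ behave correctly on the Hankel portion, which is exactly what forces the angle $\theta$ of the contour to lie inside the analyticity sector that $\Delta$ provides.
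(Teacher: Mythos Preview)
The paper does not prove this statement at all: the Transfer Theorem is simply quoted as a background tool from Flajolet and Sedgewick's book \cite{flajolet-sedgewick}, with no proof or sketch given. Your proposal is a faithful outline of the standard Flajolet--Odlyzko Hankel-contour argument that underlies the result in that reference, so there is nothing in the paper to compare against beyond noting that you have supplied what the authors deliberately omitted.
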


In this article, the analyticity of our functions on a domain with the same shape as $\Delta$ is generally obvious (mainly because we have explicit expressions). The justification of analyticity will be then omitted, except if there is a subtlety to stress.

\noindent \textbf{Example of use of transfer theorem.} Consider the exponential generating function of $(2n-3)!!$. We will prove that it is equal to $1- \sqrt{1-z}$. By the transfer theorem, the numbers $(2n-3)!!/n!$ are equivalent to $-n^{-3/2}/\Gamma(-1/2)= n^{-3/2}/(2\sqrt \pi)$. This can be checked by the Sterling formula.

The next theorem deals with the Quasi-Powers Theorem, stated under a form which will be useful for us.

\begin{theo}[Theorem IX.11 of \cite{flajolet-sedgewick} -- Quasi-Powers Theorem]  
\label{theo:qp}
Let $F(z,u) = \sum_{n \geq n_0,k\geq 0} f_{n,k} z^n u^k$ be a bivariate function with non-negative coefficients such that $\sum_{k \geq 0} f_{n,k} = 1$  for $n \geq n_0$.
\begin{itemize}
\item[(i)] \emph{Analytic representation.} The function $F(z,u)$ admits the representation 
\[F(z,u) = A(z,u) + B(z,u)(1-z)^{-\alpha(u)}\]
where $A(z,u)$ and $B(z,u)$ are analytic  on a domain of the form $\{ (z,u) \ | \ |z| \leq r,\ |u-1| < \varepsilon\}$, with $r > 1$ and $\varepsilon > 0$. Assume also that $\alpha(u)$ is analytic at $1$ such that $\alpha(1)$ is not a non-positive integer and $B(1,1) \neq 0$.
\item[(ii)] \emph{Variability condition.} One has $\alpha'(1)+\alpha''(1) \neq 0$.
\end{itemize}
Then the random variable $X_n$ such that $\mathbb P(X_n = k) = f_{n,k}$ converges in distribution  to a Gaussian variable. The corresponding mean is $\alpha'(1)\ln n$ and the variance is $(\alpha'(1)+\alpha''(1)) \ln n$.
\end{theo}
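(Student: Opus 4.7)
The plan is to prove the Gaussian limit by analyzing the characteristic function of a suitably centered and normalized version of $X_n$, via the standard "quasi-powers" philosophy: extract coefficients in $z$ and keep $u$ as a parameter near $1$. The hypothesis $\sum_k f_{n,k}=1$ for $n\geq n_0$ says that
\[
p_n(u) := \mathbb{E}\bigl[u^{X_n}\bigr] = [z^n] F(z,u)
\]
is a genuine probability generating function, and in particular $p_n(1)=1$.

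First I would apply the transfer theorem (Theorem \ref{theotransfer}) in the variable $z$ with $u$ as a complex parameter. Since $A(z,u)$ is analytic on $|z|\le r$ with $r>1$, its contribution to $[z^n]F(z,u)$ decays geometrically and is absorbed into the error. The singular term $B(z,u)(1-z)^{-\alpha(u)}$ gives, uniformly for $u$ in a small complex neighborhood of $1$,
\[
p_n(u) = \frac{B(1,u)}{\Gamma(\alpha(u))}\, n^{\alpha(u)-1}\,\bigl(1+o(1)\bigr).
\]
Dividing by $p_n(1)$ (which must tend to $1$; the normalization $\sum_k f_{n,k}=1$ forces $B(1,1)/\Gamma(\alpha(1))$ to match $[z^n](1-z)^{-\alpha(1)}$ asymptotically) yields
\[
p_n(u) = \frac{B(1,u)\,\Gamma(\alpha(1))}{B(1,1)\,\Gamma(\alpha(u))}\, n^{\alpha(u)-\alpha(1)}\,\bigl(1+o(1)\bigr).
\]

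Next, I would substitute $u = e^{s}$ and Taylor-expand $\alpha(e^s)$ around $s=0$. A direct computation gives
\[
\alpha(e^s) - \alpha(1) = \alpha'(1)\,s + \tfrac{1}{2}\bigl(\alpha'(1)+\alpha''(1)\bigr)\,s^{2} + O(s^{3}),
\]
so that $n^{\alpha(e^s)-\alpha(1)} = \exp\!\bigl(\alpha'(1)s\ln n + \tfrac{1}{2}(\alpha'(1)+\alpha''(1))s^{2}\ln n + O(s^{3}\ln n)\bigr)$. To recover the characteristic function of the normalized variable
\[
X_n^{\ast} = \frac{X_n - \alpha'(1)\ln n}{\sqrt{(\alpha'(1)+\alpha''(1))\ln n}},
\]
I would set $s = \mathrm{i}t\bigl((\alpha'(1)+\alpha''(1))\ln n\bigr)^{-1/2}$, check that the prefactor $B(1,e^s)\Gamma(\alpha(1))/\bigl(B(1,1)\Gamma(\alpha(e^s))\bigr)$ tends to $1$ and that $O(s^{3}\ln n) = O((\ln n)^{-1/2})\to 0$. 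After cancellation of the linear term by the centering, the characteristic function of $X_n^{\ast}$ converges pointwise to $e^{-t^{2}/2}$, and L\'evy's continuity theorem delivers the Gaussian limit. The variability condition $\alpha'(1)+\alpha''(1)\neq 0$ is exactly what makes the normalization well defined.

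The main obstacle is the \emph{uniformity} of the transfer-theorem estimate in the parameter $u$. One must verify that the indented contour $\Delta$ (see Theorem \ref{theotransfer}) can be chosen independently of $u$ in a complex neighborhood of $1$, and that the error term is uniform there; this is precisely what the joint analyticity of $A(z,u)$, $B(z,u)$ and $\alpha(u)$ on a bidisk $\{|z|\le r,\ |u-1|<\varepsilon\}$ with $r>1$ is meant to provide. Once that technical uniformity is in hand, the remainder of the argument is the routine Taylor expansion and appeal to L\'evy's theorem outlined above.
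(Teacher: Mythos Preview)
The paper does not prove this statement: it is quoted verbatim as Theorem~IX.11 of Flajolet--Sedgewick and used as a black box in Section~\ref{sec:eb}. There is therefore no ``paper's own proof'' to compare against. That said, your sketch is exactly the standard quasi-powers argument one finds in \cite{flajolet-sedgewick}: uniform singularity analysis in $z$ with $u$ as parameter, the Taylor expansion of $\alpha(e^{s})-\alpha(1)$ (your computation $\alpha'(1)s+\tfrac12(\alpha'(1)+\alpha''(1))s^2+O(s^3)$ is correct), the substitution $s=\mathrm{i}t/\sqrt{(\alpha'(1)+\alpha''(1))\ln n}$, and L\'evy's continuity theorem. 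You also correctly identify the only real technical point, namely the uniformity of the contour-integral error in the transfer theorem over a neighborhood of $u=1$; this is handled in Chapter~IX of \cite{flajolet-sedgewick} by the joint analyticity hypothesis you invoke. Nothing is missing.
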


\subsection{A refined asymptotic result}

We will need a precise asymptotic expansion of $c_{n-1}/c_{n}$. The dominant term has already been given by Stein and Everett: they showed that $c_{n-1}/c_{n} \, \sim \, 1/2n$. This expansion can also be deduced from the article of Michael Borinsky~\cite{michi}.

\begin{prop}The ratio between the numbers of connected chord diagrams with $n-1$ arcs and $n$ arcs is asymptotically equivalent to
\[\frac {c_{n-1}} {c_{n}} = \frac 1 {2\,n} + \frac  1 {4\,n^2} -  \frac  1 {2\,n^3}  -  \frac  {29} {8\,n^4} +  O  \left( \frac 1 {n^5} \right).\]
\label{prop:expansion}
\end{prop}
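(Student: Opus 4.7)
The plan is to exploit Stein's recurrence \eqref{eq:rec} to bootstrap an asymptotic expansion of $u_n := c_{n-1}/c_n$. First, I isolate the two boundary terms $k=1$ and $k=n-1$ in the sum, each of which equals $c_{n-1}$, and divide through by $c_{n-1}$ to obtain
\[
\frac{1}{(n-1)\,u_n} \;=\; 2 \;+\; T_n, \qquad T_n \;:=\; \sum_{k=2}^{n-2} c_k\,\frac{c_{n-k}}{c_{n-1}} \;=\; \sum_{k=2}^{n-2} c_k \prod_{j=n-k+1}^{n-1} u_j,
\]
where the last equality is a telescoping $c_{n-k}/c_{n-1} = u_{n-1}u_{n-2}\cdots u_{n-k+1}$. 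The key observation is that, for fixed $k$, this product is of order $(2n)^{-(k-1)}$, so small values of $k$ (and, by the symmetry $k\leftrightarrow n-k$, small values of $n-k$) dominate $T_n$ as $n\to\infty$.

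I then insert the ansatz $u_n = \tfrac{1}{2n} + \tfrac{a_2}{n^2} + \tfrac{a_3}{n^3} + \tfrac{a_4}{n^4} + O(1/n^5)$ and match powers of $1/n$ on both sides of the rewritten recurrence. To identify the coefficient of $1/n^N$ one needs only finitely many terms of $T_n$: combining Stein--Everett's estimate $c_n\sim(2n-1)!!/e$ with the telescoping formula shows that $c_k c_{n-k}/c_{n-1}$ decays super-polynomially in the middle range $K\le k\le n-K$ (for a cutoff $K$ depending on $N$), while the edge terms $k\le K$ and their symmetric counterparts yield a finite Laurent polynomial in $1/n$ once each factor $u_{n-m}$ is expanded around $n=\infty$. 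As a sanity check for $N=2$: the $k=2$ and $k=n-2$ contributions each give $u_{n-1} = \tfrac{1}{2n} + O(1/n^2)$, so $T_n = \tfrac{1}{n} + O(1/n^2)$; inverting gives $u_n = \tfrac{1}{2n} + \tfrac{1}{4n^2} + O(1/n^3)$.

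Iterating this procedure, determining $a_2,\, a_3,\, a_4$ in turn from the coefficients of $1/n,\ 1/n^2,\ 1/n^3$ in the matching identity, produces $a_2=1/4,\, a_3=-1/2,\, a_4=-29/8$ as claimed. The main obstacle is the symbolic bookkeeping: to reach order $1/n^4$ one must expand the product $\prod u_{n-m}$ up to $k=5$ factors, re-expand each $u_{n-m}$ as a power series in $1/n$, and collect contributions weighted by $c_2,\ldots,c_5$. A rigorous tail estimate on the middle portion of $T_n$ must accompany this; this is a direct consequence of the Stein--Everett asymptotic, which shows that terms with both $k$ and $n-k$ large are exponentially smaller than the edge terms. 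An alternative route, alluded to in the paper, is to read the expansion off the full asymptotic series for $c_n$ established by Borinsky~\cite{michi}, which short-circuits the bookkeeping at the cost of invoking a heavier external result.
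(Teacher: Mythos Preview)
Your proposal is correct and follows essentially the same bootstrapping strategy as the paper: isolate the edge terms in Stein's recurrence, control the remaining central sum, and iterate to extract successive coefficients. The paper normalizes by $c_n$ rather than $c_{n-1}$ and packages the tail estimate as a separate lemma, but the mechanism is the same.

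One point of phrasing deserves tightening. You write that $c_k c_{n-k}/c_{n-1}$ ``decays super-polynomially in the middle range $K\le k\le n-K$''. That is true only in the deep middle (say $k\asymp n/2$); at the boundary $k=K$ the term is of polynomial size $\sim c_K/(2n)^{K-1}$, and this boundary contribution dominates the whole central sum, giving $\sum_{k=K}^{n-K} c_k c_{n-k}/c_{n-1}=O(n^{-(K-1)})$. The paper obtains exactly this bound via the Stein--Everett inequality $(2n-1)c_{n-1}<c_n<2nc_{n-1}$, which yields the monotonicity $c_kc_{n-k}\le c_{k-1}c_{n-k+1}$ and hence lets one majorize the central sum by $n$ copies of its largest term. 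Your route through the asymptotic $c_n\sim(2n-1)!!/e$ also works, but you would need to convert it into uniform two-sided bounds on $c_m/(2m-1)!!$ before summing; the Stein--Everett inequality does this for free. Once that is made precise, the rest of your matching-of-coefficients computation goes through exactly as you describe.
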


The proof simply relies on what is often called \textit{bootstrapping}. We need first to establish a lemma which bounds the contribution of the central terms in the sum $(n-1)\,\sum_{k=1}^{n-1}\,\frac{c_k\,c_{n-k}}{c_n}$.

\begin{lem} If $c_n$ denotes the number of connected diagrams with $n$ chords, we have for fixed $j \geq 2$ the estimate
\begin{equation*}
(n-1)\,\sum_{k=j}^{n-j}\,\frac{c_k\,c_{n-k}}{c_n} = O\left(\frac 1 {n^{j-1}}\right).
\label{eq:bound}
\end{equation*}
\end{lem}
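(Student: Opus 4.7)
The plan is to reduce $c_k c_{n-k}/c_n$ to a ratio of binomial coefficients and then to exploit the sharp monotonicity of that ratio to show the whole sum is dominated by its endpoint terms. The Stein--Everett estimate $c_n \sim (2n-1)!!/e$ supplies positive constants $A \leq B$ with $A(2n-1)!! \leq c_n \leq B(2n-1)!!$ for every $n \geq 1$, and the identity $(2m-1)!! = (2m)!/(2^m\, m!)$ then turns this into
\[
\frac{c_k c_{n-k}}{c_n} \;\leq\; \frac{B^2}{A}\, T(n,k), \qquad T(n,k) \;:=\; \frac{\binom{n}{k}}{\binom{2n}{2k}}.
\]
So the task reduces to showing $(n-1)\sum_{k=j}^{n-j} T(n,k) = O(1/n^{j-1})$.

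The key tool will be the explicit ratio
\[
\frac{T(n,k+1)}{T(n,k)} \;=\; \frac{2k+1}{2n-2k-1},
\]
which shows that $T(n,\cdot)$ is symmetric about $n/2$, decreases on $[1,\lfloor n/2\rfloor]$, and has consecutive ratios bounded by $1/2$ as soon as $k \leq (2n-3)/6$. I would split the sum at $k = \lfloor n/3\rfloor$: on $[j,\lfloor n/3\rfloor]$ a telescoped geometric bound gives $\sum T(n,k) \leq 2\, T(n,j)$, while for $k \in [\lfloor n/3\rfloor, n/2]$ monotonicity combined with the geometric decay yields $T(n,k) \leq T(n,j)\cdot 2^{-(\lfloor n/3\rfloor - j)}$, whose sum over $O(n)$ indices is $o(T(n,j))$. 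The symmetry $T(n,k)=T(n,n-k)$ handles $[n/2,n-j]$, and altogether $\sum_{k=j}^{n-j} T(n,k) = O(T(n,j))$.

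A routine expansion of $\binom{n}{j}/\binom{2n}{2j}$ for fixed $j$ gives $T(n,j) = O(1/n^j)$, and multiplying by $(n-1)$ produces the announced $O(1/n^{j-1})$. The only delicate point I foresee is the uniformity of the comparison $c_n \asymp (2n-1)!!$: it has to hold simultaneously for every $n \geq 1$, not merely asymptotically, so that the pointwise bound survives summation against an index $k$ ranging across the bulk of $\{1,\ldots,n-1\}$. This is immediate from the Stein--Everett limit together with the positivity of $c_n/(2n-1)!!$ at each finite $n$; beyond this checking, the rest is pure bookkeeping with the ratio formula.
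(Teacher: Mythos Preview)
Your argument is correct and takes a genuinely different path from the paper's. The paper stays with the $c_n$'s themselves: it invokes the Stein--Everett two-sided bound $(2n-1)c_{n-1}<c_n<2nc_{n-1}$ (valid for $n\geq 5$) to prove directly that $c_k c_{n-k}$ is monotone decreasing in $k$ on $\{2,\ldots,\lfloor n/2\rfloor\}$, so the interior sum is bounded by $n$ copies of an endpoint term, and then uses $c_{n-i}/c_{n-i+1}\sim 1/(2n)$ iteratively to finish. You instead trade the $c_n$'s for double factorials via the global comparison $c_n\asymp(2n-1)!!$, reducing everything to the explicit quantity $T(n,k)=\binom{n}{k}/\binom{2n}{2k}$ with its closed-form consecutive ratio $(2k+1)/(2n-2k-1)$; this gives not just monotonicity but geometric decay with ratio $\tfrac12$ on the first third of the range, making the tail estimate particularly clean.

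What each approach buys: yours is tidier once the asymptotic $c_n\sim(2n-1)!!/e$ is granted (and it is, as a cited Stein--Everett result, so there is no circularity with the downstream use of this lemma), since the binomial ratio is fully explicit and the geometric structure immediate. The paper's route is slightly more elementary in that it uses only the ratio inequalities rather than the limit, and keeps the whole argument internal to the $c_n$ sequence. Both ultimately rest on the same phenomenon: the product $c_k c_{n-k}$ is sharply concentrated at the boundary of $\{1,\ldots,n-1\}$.
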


\begin{proof}  
 The core of the proof lies in the inequality
\begin{equation}
(2\,n-1)\,c_{n-1} < c_{n} < 2\,n\, c_{n-1},
\label{eq:boundratio}
\end{equation}
holding for all $n \geq 5$. This has been stated by Stein and Everett in \cite[Lemmas 3.1 and 3.4]{stein-everett} and was proved by a (technical) induction. 
Notice that this inequality justifies the previous estimate $c_{n-1}/c_{n} \, \sim \, 1/2n$.
For $n \geq 8$ and $k \in \ens{5,\dots,\lfloor\frac n 2\rfloor}$, we then have 
$$c_{k}\,c_{n-k} \leq \frac {2\,k}{2\,n-2\,k+1} \, c_{k-1}\,c_{n-k+1} \leq c_{k-1}\,c_{n-k+1}.$$
The inequality $c_{k}\,c_{n-k}  \leq c_{k-1}\,c_{n-k+1}$ is also true for $k \in \ens{2,3,4}$ since $c_{k}/c_{k-1} \leq c_4/c_3  < 7$ and thanks to \eqref{eq:boundratio}, we have $c_{n-k+1}/c_{n-k} \geq (2n-2k+1) \geq 9$ for every $n \geq 8$ and $k \in \ens{2,3,4}$ (Inequality \eqref{eq:boundratio} applies because $n-k+1 \geq 5$).
Therefore, we show by a basic induction that for every $n \geq 8$ and for $ 2 \leq  k \leq j \leq \lfloor\frac n 2\rfloor,$
\begin{equation} c_{k}\,c_{n-k} \leq c_{k-1}\,c_{n-k+1} \leq \dots \leq c_{j-1}\,c_{n-j+1} \leq c_{j}\,c_{n-j}. 
\label{eq:gros}
\end{equation}
Taking in particular the inequality $c_{k}\,c_{n-k} \leq c_{j-1}\,c_{n-j+1}$ and summing over $k \in \{j+1,n-j+1\}$, we obtain \[\sum_{k=j+1}^{n-j-1} c_{k}\,c_{n-k} \leq  n \,c_{j+1}\,c_{n-j-1}.\] Consequently,
\[(n-1)\,\sum_{k=j}^{n-j}\,\frac{c_k\,c_{n-k}}{c_n} = O\left(n\,\frac{c_{n-j}}{c_n} +  n^2\,\frac{c_{n-j-1}}{c_n}\right).\] 
But $c_{n-j}/c_n$ can be written as $\prod_{i=1}^{j} c_{n-i}/c_{n-i+1}$, so is equivalent to $1/(2n)^j$ (we have used the estimate $c_{n-1}/c_{n} \, \sim \, 1/2n$). Plugging this in the previous equality directly gives the lemma.
\end{proof}

Now let us describe how to find an expansion of $c_{n-1}/c_n$.

\begin{proof}[Proof of Proposition \ref{prop:expansion}]
 By combining the previous lemma and \eqref{eq:rec}, we deduce that 
\[ 1 = 2(n-1)\sum_{k=1}^j \frac{c_k\,c_{n-k}}{c_n} + O\left(\frac 1 {n^j}\right)\]
holds for $j \geq 1$. For $j=1$, we recover $c_{n-1}/c_{n} \, \sim \, 1/2n$. For $j=2$, we have
\[ 1 = 2(n-1)\frac{c_{n-1}}{c_n} + 2(n-1) \frac{c_{n-2}}{c_{n-1}} \frac{c_{n-1}}{c_n}  + O\left(\frac 1 {n^2}\right).\] 
Setting $ c_{n-1}/c_{n} = 1/2n + d_n$ leads to
\[0= - \frac 1 n + 2\,(n-1)\,d_n  + 2\,(n-1)\,\left( \frac1 {2(n-1)}+o\left(\frac 1 n\right) \right) \left( \frac 1 {2\,n}+o\left(\frac 1 n\right) \right)  + o\left(\frac 1 {n}\right) \]
(we have used the fact that $d_n = o(1/n)$), and so $d_n \sim 1/4n^2$. 

This process can be repeated for $j=3,4,\dots$ to find the  predicted expansion of $c_{n-1}/c_n$.
\end{proof}

%
% We have then 
%$$(n-1)\,\sum_{j=3}^{n-3} c_{j}\,c_{n-j} \leq 4\,(n-8)\,(n-1)\,c_{n-3} + 8 (n-1) \, c_{n-3} \leq 4\,n^2\,c_{n-3}.$$ 
%But by \eqref{eq:boundratio}, we have $c_{n-3} < c_{n-1}/((2n - 5)\,(2n-3))$, hence
%%$$(n-1)\,\sum_{j=3}^{n-3} c_{j}\,c_{n-j} = \mathcal O \left(n^{-1}\,c_{n-1}\right).$$
%
%Let $h_n$ be the number $c_n/c_{n-1}-2\,n$.  If we extract   the terms indexed by $k \in \ens{1,2,n-2,n-1}$ and divide the two sides by $c_{n-1}$, then Equation \eqref{eq:rec} can be rewritten as
%$$h_n = - 2 + 2 (n-1) \frac{c_{n-2}}{c_{n-1}} + \sum_{j=3}^{n-3} \frac{c_{j}\,c_{n-j}}{c_{n-1}},$$
%or
%$$h_n = - 2 + 2 (n-1) \frac 1 {2\,n + h_{n-1}-2} + \mathcal O \left(n^{-1} \right).$$
%Since Equation  \eqref{eq:boundratio} implies $-1 < h_n < 0$, we obtain $h_n = -1 + \mathcal O \left(n^{-1}\right).$
%
%The next terms in the asymptotic expansion of $c_n/c_{n-1}$ can be similarly found: extract the first terms in the sum of Equation \eqref{eq:rec}, divide by $c_{n-1}$, bound what is left thanks to Equation~\eqref{eq:gros}, express every $c_{n-k}/c_{n-1}$ in terms of the ratios $c_{n-j-1}/c_{n-j}$, and calculate an asymptotic estimate of the obtained equation. For instance, the next asymptotic term is $3/(2n)$:
%$$ \frac{c_n}{c_{n-1}} = 2n - 1 + \frac 3 {2\,n} + o \left( \frac 1 n \right).$$
%
%The estimate stated in the proposition is then obtained by taking the inverse of the previous expansion.
%\end{proof}

\section{Log expansions of the solution of the Dyson-Schwinger equation}
\label{sec:llterms}

\subsection{Context}
In this section, we show how we can deduce from Theorem~\ref{theo:MYchord} asymptotic properties on the \textit{log expansions} in quantum field theory. Let us explain first what is a log expansion.

 Suppose we have an expansion with the following form
\[
G(x,L) = 1 + \sum_{i \geq 1}\sum_{j \geq i} a_{i,j} \, L^i\,x^j.
\]
The particular Dyson-Schwinger equations we are interested in have their solution in this form as do a broad class of perturbative expansions in quantum field theory.  Then, rather than thinking of the sum first as an expansion in one of the variables with coefficients which are series in the other variables, we can take an expansion which takes variables together.

Specifically, we can write the expansion as
\[
G(x,L) = \sum_{k\geq 0} \sum_{i\geq 0} a_{i,i+k}\,(Lx)^i\,x^k.
\]
The $k=0$ part of this sum, namely the terms of $G(x,L)$ where the powers of $L$ and $x$ are the same, is known as the \emph{leading log expansion}, the $k=1$ part of this sum, namely the terms of $G(x,L)$ where the power of $x$ is one more than the power of $L$ is known as the \emph{next-to-leading log expansion}.  The $k=2$ is known as the \emph{next-to-next-to-leading log expansion} and so on.

This leading log language comes from the fact that $L$ is the logarithm of some appropriate energy scale, while $x$ is the coupling constant which is treated as a small parameter.  So the leading log expansion captures the maximal powers of $x$ relative to the powers of the energy scale, and so is in an important sense the leading term.  The next-to-leading log expansion is the next part; it is suppressed by one power of $x$, and so on.

Furthermore the full log expansion is algebraically and analytically meaningful in the sense that the contributions of larger primitive graphs and new (presumably) transcendental numbers appear further out in the next-to-next-to\dots hierarchy.  We see this manifested in our results, but it is a much more general physical fact (compare \cite{KKllog}).

In view of \eqref{sol}, the leading log expansion  for the Dyson-Schwinger equations is
\[
  -\sum_{\substack{C\\b(C) = |C|}} \frac{(-Lx)^{|C|}}{|C|!} f_0^{|C|}
\]
while the next-to-leading log expansion is
\begin{multline}
 -\sum_{\substack{C\\b(C) = |C|-1}} \frac{(-Lx)^{|C|-1}x}{(|C|-1)!} f_{0}f_0^{|C|-2}f_1
-\sum_{\substack{C\\b(C) = |C|}} \frac{(-Lx)^{|C|-1}x}{(|C|-1)!} f_{1}f_0^{|C|-1} \\
 = -\sum_{\substack{C\\b(C) \geq |C|-1}} \frac{(-Lx)^{|C|-1}x}{(|C|-1)!} f_0^{|C|-1}f_1 \label{eq:ntlle}
\end{multline}
In general the next-to${}^i$-leading log expansion is
\[
 -\sum_{\substack{C\\b(C) \geq |C|-i}} \frac{(-Lx)^{|C|-i}x^i}{(|C|-i)!} f_{b(C)-|C|+i}f_0^{|C|-k}\prod_{j=2}^kf_{t_j-t_{j-1}}
\]
where $b(C) = t_1 < t_2< \dots < t_k$ are the terminal chords of $C$.
We switch the signs from now on, both overall and of $L$, because they are the result of the conventions of \cite{Ymem} and perhaps not actually a good choice.

All this suggests that it is worthwhile to study connected diagrams $C$  such that $b(C) \geq |C|-i$, where $i$ is fixed. The present section continues by establishing numerous enumerative and asymptotic results concerning these diagrams.

\subsection{Recurrence equations} \label{Bk sub sec}

We begin by an induction that characterizes the number of connected diagrams $C$ of size $n$ such that every terminal chord has index between $n-k$ and $n$.

\begin{prop} Fix $k \geq 0$. For $n > 0$, let $b_{n,k}$ be the number of connected diagrams $C$ with $n$ chords such that $b(C) \geq n - k$, and $c_n = b_{n,n-1}$ the number of connected diagrams with $n$ chords.
For every $k\geq 0$, we have $b_{1,k} = 1$  and for $n \geq 2$
\begin{equation}
b_{n,k} = (2\,n-3)\,b_{n-1,k} + \sum_{i=1}^{\min(k,n-2)} (2\,i-1)\,c_i\,b_{n-i,k-i}. \label{eq:bnk}
\end{equation}
\label{prop:rec}
\end{prop}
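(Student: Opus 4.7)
The plan is to prove \eqref{eq:bnk} by a combinatorial decomposition of $C$ that strips off the \emph{last component} of $C \setminus \{r\}$, where $r$ is the root. Write $C = \{r\} \cup C_1 \cup \dots \cup C_m$, with $C_1, \dots, C_m$ the components of $C \setminus \{r\}$ ordered by first vertex. The key structural observation is that outgoing edges in the oriented intersection graph of $C$ stay within a single component: chords in distinct components do not cross, and no chord can point into $r$ since $r$ starts at position $1$. Hence the terminals of $C$ coincide with the terminals of the $C_j$'s, and the first terminal in intersection order lies in $C_1$, giving $b(C) = 1 + b(C_1)$.

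For $m \geq 2$, I set $i := |C_m|$ and define $B := \{r\} \cup C_1 \cup \dots \cup C_{m-1}$, viewed as a connected chord diagram of size $n-i$ after relabelling. Since $C_m$ lies in its own component of $C \setminus \{r\}$, removing it does not disturb any crossing of $B$, and the components of $B \setminus \{r\}$ are precisely $C_1, \dots, C_{m-1}$ in the same order. Consequently $b(B) = 1 + b(C_1) = b(C) \geq n-k$, which rewrites as $b(B) \geq (n-i)-(k-i)$, placing $B$ in $b_{n-i, k-i}$; meanwhile $C_m$ is arbitrary in $c_i$.

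The boundary case $m=1$ is treated separately: here $C$ is obtained by attaching a root chord $\{1,\mu\}$ (with $\mu \in \{2,\dots,2n\}$) to $C_1 \in b_{n-1,k}$. Connectedness of $C$ requires that the new root cross some chord of $C_1$, which fails precisely when $\mu=2$ (short root) or $\mu=2n$ (chord enclosing all of $C_1$, whose endpoints lie in $\{2,\dots,2n-1\}$ and hence cross nothing). The remaining $2n-3$ positions yield the boundary term $(2n-3)\,b_{n-1,k}$.

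The main obstacle is to show that for $m \geq 2$ the number of gluings reconstructing $C$ from a pair $(B, C_m)$ is exactly $2i-1$. The constraints that $C_m$ may not cross any chord of $B \setminus \{r\}$, must cross $r$, and must have a later first vertex than the last component of $B \setminus \{r\}$ should force the $2i$ endpoints of $C_m$ into a single pair of consecutive ``slots'' of $B$, one on each side of the right endpoint of $r$, determined by the innermost nesting around that endpoint coming from $C_{m-1}$. Distributing $2i$ ordered endpoints into these two slots with at least one on each side produces $(2i+1)-2 = 2i-1$ distinct arrangements, each yielding a different connected diagram $C$ with the specified decomposition. Summing $(2i-1)\,c_i\,b_{n-i,k-i}$ over $i \in \{1,\dots,\min(k,n-2)\}$ (the bounds ensuring $|B| \geq 2$ and $k-i \geq 0$) and adding the boundary term yields exactly \eqref{eq:bnk}.
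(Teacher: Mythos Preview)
Your proof is correct and follows essentially the same decomposition as the paper: remove the root chord, distinguish the one-component case (giving the $(2n-3)b_{n-1,k}$ term) from the multi-component case, and in the latter peel off the last component $C_m$ of size $i$ while recording how the root's right endpoint sits among $C_m$'s $2i$ points to obtain the factor $2i-1$. Your description of the $2i-1$ gluings in terms of distributing $C_m$'s endpoints into the two slots flanking the root's right endpoint is exactly dual to the paper's ``position of the root chord through $C_s$''; both presentations leave the underlying nesting fact (that $C_m$'s points must occupy precisely those two slots) at the level of an asserted observation.
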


This recurrence relation enables to compute the first values of $b_{n,k}$:
\begin{eqnarray*}b_{{1,0}}=1,b_{{2,0}}=1,b_{{3,0}}=3,b_{{4,0}}=15,b_{{5,0}}=
105,b_{{6,0}}=945,\dots\\
b_{{1,1}}=1,b_{{2,1}}=1,b_{{3,1}}=4,b_{{4,1}}=23,b_{{5,1}}=
176,b_{{6,1}}=1689,\dots\\
b_{{2,1}}=1,b_{{2,2}}=1,b_{{3,2}}=4,b_{{4,2}}=27,b_{{5,2}}=
221,b_{{6,2}}=2210
,\dots 
\end{eqnarray*}

\begin{proof} Equation \eqref{eq:bnk} is derived from a specific decomposition of the connected chord diagrams, which we are going to describe, and which is illustrated by Figure~\ref{fig:rec}. 
%Remark that this recursion is different from the ones existing in the literature; it 
This recursion has a good transcription in terms of exponential generating functions (see Proposition~\ref{prop:GFrec}).

\begin{figure}[ht!]
\begin{center}
\includegraphics[width=0.6\textwidth]{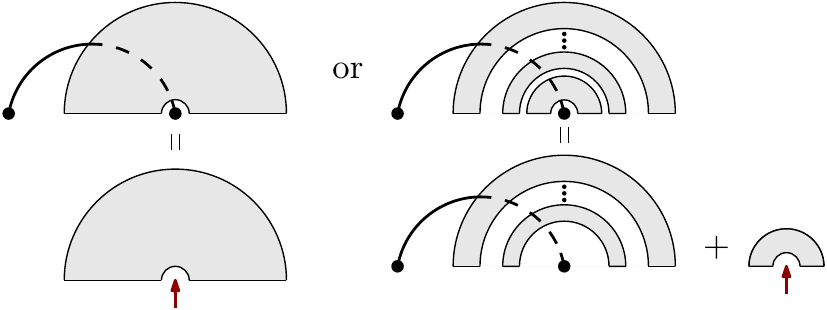}
\end{center}
\vspace*{-0.5cm}
\caption{Illustration of the decomposition from the proof of Proposition~\ref{prop:rec}. }
\label{fig:rec}
\end{figure}

Let $C$ be a connected chord diagram of size $n$ such that $b(C) \geq n - k$. If we remove the root chord of $C$, two exclusive possibilities can occur:
\begin{itemize}
\item\textbf{ The obtained diagram $C'$ is still connected.} In this case, $C'$ has $n-1$ chords and we have $ b(C') = b(C)-1$, and so  $ b(C') \geq n - 1 - k$. Moreover, to recover the diagram $C$, we need (and it is sufficient) to remember the position of the right endpoint of the root chord of $C$. Since $C'$ has $n-1$ chords, there are $2 (n-1) - 1$ possible positions. That is why the number of such diagrams $C$ is given by $(2\,n-3)\,b_{n-1,k}$.
\item  \textbf{We obtain several connected components $\boldsymbol{C_1,\dots,C_s}$.} We denote by $i$ the number of chords in $C_s$, and by $C'$ the (connected) diagram obtained by removing $C_s$ from $C$. Diagram $C'$ has $n-i$ chords, and the position of the first terminal chord has remained unchanged, hence $b(C') \geq (n-i) - (k-i)$. Observe then that it is possible to recover $C$ from $C'$ and $C_s$ provided the position of the root chord through $C_s$ (there are $(2i-1)$ such possible positions). The number of such diagrams $C$ is thus $ (2\,i-1)\,c_i\,b_{n-i,k-i}$.
\end{itemize}
The conjunction of these two cases infers Equation \eqref{eq:bnk}.
\end{proof}

Remark that for $k=0$, Recurrence \eqref{eq:bnk} is simply $b_{n,0} = (2\,n-3)\,b_{n-1,0}$. This provides a nice formula (and a combinatorial proof!) for the numbers $b_{n,0}$, which correspond to the numbers of connected chord diagrams with exactly one terminal chord (the last chord of a diagram is necessarily terminal).

\begin{cor} \label{cor:oneterm} The number of connected diagrams with $n$ chords and only one terminal chord is $(2\,n-3)!!$.
\end{cor}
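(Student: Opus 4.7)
The proof is essentially a corollary of Proposition~\ref{prop:rec} specialised to $k=0$, together with one small combinatorial observation that is already hinted at in the remark preceding the statement. My plan has two parts.

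First, I would read off the closed form for $b_{n,0}$ directly from the recurrence~\eqref{eq:bnk}. When $k=0$, the sum $\sum_{i=1}^{\min(0,n-2)} (2i-1)\,c_i\,b_{n-i,k-i}$ is empty, so the recurrence collapses to $b_{n,0}=(2n-3)\,b_{n-1,0}$ for $n\geq 2$, with initial value $b_{1,0}=1$. A trivial induction then yields
\[
b_{n,0} = (2n-3)(2n-5)\cdots 3\cdot 1 = (2n-3)!!.
\]

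Second, I would identify $b_{n,0}$ with the count of connected diagrams having exactly one terminal chord. By definition $b_{n,0}$ enumerates connected diagrams $C$ with $n$ chords such that $b(C)\geq n$, i.e.\ the first terminal chord (in intersection order) has index exactly $n$. So the enumerative content of the corollary reduces to the claim that \emph{a connected chord diagram has exactly one terminal chord if and only if its $n$-th chord in intersection order is terminal}. The nontrivial direction requires verifying that the last chord in intersection order is always terminal; I would prove this by induction on $n$, following the recursive definition of the intersection order. For $n=1$ it is immediate. For $n\geq 2$, remove the root $\{1,r\}$, let $C_1,\dots,C_s$ be the resulting components, and let $c^\star$ be the last chord of $C_s$ in intersection order. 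By induction $c^\star$ is terminal inside $C_s$, so it has no outgoing edge to another chord of $C_s$. It also has no outgoing edge to any chord of $C_1,\dots,C_{s-1}$, since these lie in different connected components of the intersection graph of $C\setminus\{\text{root}\}$ from $c^\star$. Finally, writing $c^\star=\{a,b\}$ with $a\geq 2$, an outgoing edge to the root would require $a<1<b<r$, which is impossible. Hence $c^\star$ is terminal in $C$.

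Given this observation, the equivalence is immediate: if $C$ has a single terminal chord, it must be the last one in intersection order, so $b(C)=n$; conversely, if $b(C)=n$, then no chord of index $<n$ is terminal, while the $n$-th chord is terminal by the observation, so there is exactly one terminal chord. Combined with $b_{n,0}=(2n-3)!!$, this gives the corollary. The only step that is not entirely routine is the inductive verification that the last chord in intersection order is terminal, but it is short and essentially forced by the definitions.
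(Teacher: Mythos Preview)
Your proposal is correct and follows exactly the paper's approach: the paper also derives the corollary by specialising~\eqref{eq:bnk} to $k=0$ (so that the sum is empty and $b_{n,0}=(2n-3)\,b_{n-1,0}$) and noting that the last chord in intersection order is always terminal, whence $b_{n,0}$ counts precisely the diagrams with a single terminal chord. The only difference is one of detail: the paper asserts the fact ``the last chord is necessarily terminal'' parenthetically without proof, whereas you supply a short inductive argument for it.
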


The recurrence relation of Proposition \ref{prop:rec} can be transformed in an effective way to compute the exponential generating functions of the numbers $b_{n,k}$:

\begin{prop} \label{prop master eq} Let $B_k(z)$ be the exponential generating function  of the connected chord diagrams $C$ such that $b(C) \geq n - k$. For every pair of integers $i,k$, we consider an $i$th antiderivative\footnote{It is possible to define uniquely this antiderivative by setting for example $\pd {^j B^{[i]}_k}{z^j}(0) = 0$ for $j \in \ens{0,\dots,i-1}$, but in practice, it is more convenient to take any antiderivative we find.}  $B^{[i]}_k$ for $B_k$, that is, a function  $B^{[i]}_k$ such that its $i$th derivative is equal to $B_k$. There exists a constant $\beta_k$ and a polynomial $P_k$ of degree $k$ such that
\begin{equation}
B_k(z) = \sqrt{1-2\,z} \left(\beta_k + \sum_{i=1}^{k}  (2\,i-1)\,c_i \, \int_0^z (1-2x)^{-3/2} B^{[i-1]}_{k-i}(x)\,dx   \right) + P_k(z)
\label{eq:GFrec}
\end{equation}
where $c_i$ is the number of connected diagrams with $i$ chords.
\label{prop:GFrec}
%The values of $\beta_k$ and $P_k$ can be calculated by plugging in the above relation the $(k+1)$ first terms of the  generating functions $B_k(z)$.
\end{prop}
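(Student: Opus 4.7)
The plan is to turn the recurrence \eqref{eq:bnk} into a first-order linear ODE for $B_k(z)$ and solve it by the integrating factor $(1-2z)^{-1/2}$, from which the form of \eqref{eq:GFrec} will drop out.

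First, I would shift $n \to n+1$ in \eqref{eq:bnk}, multiply both sides by $z^n/n!$, and sum over $n\geq 1$. Using $b_{1,k}=1$ the left side is $B_k'(z)-1$, and the term $(2n-1)b_{n,k} = 2nb_{n,k} - b_{n,k}$ becomes $2zB_k'(z)-B_k(z)$ at the EGF level. For the convolution sum, swapping the order of summation and setting $m=n+1-i$ gives
\[
\sum_{i=1}^{k}(2i-1)c_i\sum_{m\geq 2}b_{m,k-i}\,\frac{z^{m+i-1}}{(m+i-1)!};
\]
choosing the antiderivative $B^{[i-1]}_{k-i}(z)=\sum_{m\geq 1}b_{m,k-i}\,z^{m+i-1}/(m+i-1)!$ rewrites this as $\sum_{i=1}^{k}(2i-1)c_i\bigl(B^{[i-1]}_{k-i}(z)-z^i/i!\bigr)$. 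Collecting all terms yields the ODE
\[
(1-2z)\,B_k'(z)+B_k(z)=Q_k(z)+\sum_{i=1}^{k}(2i-1)\,c_i\,B^{[i-1]}_{k-i}(z),
\]
where $Q_k(z):=1-\sum_{i=1}^{k}(2i-1)c_i\,z^i/i!$ is a polynomial of degree $k$.

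Since $(1-2z)y'+y=0$ has solution $\sqrt{1-2z}$ with integrating factor $(1-2z)^{-1/2}$, the ODE rewrites as $\bigl((1-2z)^{-1/2}B_k(z)\bigr)'=(1-2z)^{-3/2}\cdot(\text{RHS})$. Integrating from $0$ to $z$ and multiplying through by $\sqrt{1-2z}$ gives
\[
B_k(z)=\sqrt{1-2z}\Bigl[C+\int_0^z(1-2x)^{-3/2}Q_k(x)\,dx+\sum_{i=1}^{k}(2i-1)c_i\int_0^z(1-2x)^{-3/2}B^{[i-1]}_{k-i}(x)\,dx\Bigr]
\]
for a constant $C$ determined by $B_k(0)=0$.

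To finish, I would absorb the $Q_k$ integral (together with $C$ and any change of antiderivative) into $\beta_k\sqrt{1-2z}+P_k(z)$ with $\deg P_k\leq k$. Substituting $u=1-2x$ turns $Q_k(x)$ into a polynomial of degree $k$ in $u$; integrating $u^{-3/2}$ against the monomial $u^j$ produces $u^{-1/2}$ when $j=0$ and $u^{j-1/2}$ when $j\geq 1$. After multiplication by $\sqrt{1-2z}$ these become, respectively, a constant and a polynomial in $z$ of degree at most $k$, while evaluating the antiderivative at $x=0$ contributes a further multiple of $\sqrt{1-2z}$ that merges with $C$ to form $\beta_k$. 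The same mechanism absorbs the polynomial of degree at most $i-2$ by which any other choice of antiderivative $B^{[i-1]}_{k-i}$ differs from the distinguished one. The delicate step is precisely this final bookkeeping: three sources (the polynomial $Q_k$, the lower integration limits, and the antiderivative ambiguity) all funnel into $P_k$ and $\beta_k$, and the main thing to check is that the resulting polynomial degree stays within the stated bound $k$.
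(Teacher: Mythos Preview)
Your approach is essentially the same as the paper's: both translate the recurrence \eqref{eq:bnk} into the first-order linear ODE $(1-2z)B_k'(z)+B_k(z)=Q_k(z)+\sum_{i=1}^k(2i-1)c_iB^{[i-1]}_{k-i}(z)$ and solve it with the integrating factor $(1-2z)^{-1/2}$. Your treatment is in fact slightly more careful than the paper's in the final bookkeeping, where you make explicit (via the substitution $u=1-2x$) why $\sqrt{1-2z}\int_0^z(1-2x)^{-3/2}Q_k(x)\,dx$ splits into a polynomial of degree at most $k$ plus a constant multiple of $\sqrt{1-2z}$, and why the antiderivative ambiguity is absorbed the same way; the paper simply asserts this as ``easy calculus''.
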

\begin{proof} This proof can be divided into two steps. First, we translate \eqref{eq:bnk} in terms of the functions $B_{k-i}$ with $i \in \ens{0,\dots,k}$, which gives a first order differential equation in $B_k$. Then, we simply solve this differential equation.

 Let $n > k+1$. Dividing \eqref{eq:bnk} by $(n-1)!$ and writing $2n-3$ as $2(n-1) - 1$ induces that
\begin{equation}
 \frac{b_{n,k}}{(n-1)!} - 2 \, \frac{b_{n-1,k}}{(n-2)!} + \,\frac{b_{n-1,k}}{(n-1)!} = \sum_{i=1}^{k} (2\,i-1)\,c_i\,\frac{b_{n-i,k-i}}{(n-1)!}.
 \label{eq:bndiv}
\end{equation}
Observe in all generality that the series of general term $a_{n-i-1}\,z^{n-1}/(n-1)!$ is the $(-i)$th derivative of the exponential generating function of the sequence $a_n$ if $i$ is non-positive, and an $(i-1)$th antiderivative if $i$ is positive. We then recognize in  \eqref{eq:bndiv} the coefficients of $z^{n-1}$ in $ \pd {B_k} z, z\pd {B_k} z, B_k, B^{[i-1]}_{k-i}$, respectively. Thus \eqref{eq:bndiv} can be translated by
\[ (1-2\,z) \pd {B_k} z + B_k = \sum_{i=1}^{k} (2\,i-1)\,c_i\,B^{[i-1]}_{k-i} + Q_k(z), \]
where $Q_k(z)$ is a polynomial of degree $k$ whose presence is due to the fact that the first coefficients of $B^{[i-1]}_{k-i}$ can vary, but also because \eqref{eq:bndiv} holds only for $n > k+1$.

We can solve this differential equation quite straightforwardly: we divide by $(1-2z)^{3/2}$ both sides and recognize from the left side the derivative of $(1-2z)^{-1/2} B_k$. Integrating this equation then leads to \eqref{eq:GFrec}.
(We have set $P_k(z) = \sqrt{1-2\,z} \int_0^z (1-2x)^{-3/2} Q_k(x) dx$. Some easy calculus shows that $P_k$ is  also a polynomial of degree $k$.)
\end{proof}

\noindent \textbf{Remark 1.} It is simple to compute the series $B_k(z)$ by recursion thanks to Formula \eqref{eq:GFrec}. The method is the following: for each $i$, we begin by compute the antiderivatives $B^{[i-1]}_{k-i}$, then plug them into \eqref{eq:GFrec}, evaluate the formula and then eliminate $\beta_k$ and $P_k(z)$ thanks to the first values of $B_k(z)$ given by Proposition \ref{prop:rec}. We thus obtain:
\begin{eqnarray*}
B_0(z) = & 1 - \sqrt{1-2\,z}, \\
B_1(z) = & 1 + z+ \frac 1 2\,\sqrt {1-2\,z} \,\ln  \left( 1-2\,z \right) -\sqrt {1-2\,z}, \\
B_2(z) = & \left(\frac {\ln  \left( 1-2\,z \right)} 2  - \frac{ \ln  \left( 1-2\,z \right)^{2}
 }  8 + z-3 \right)\,\sqrt {1-2\,z} + 3  -2\,z+ \frac {{z}^{2}} 2.
\end{eqnarray*}
It is important to notice that the method is automatic.  In this regard, a \texttt{maple} file is available along with the \texttt{arXiv} version of this paper.

\noindent \textbf{Remark 2.} The foregoing gives information about the generating function of connected diagrams $C$ such that $b(C) \geq n - k$ but nothing about the distribution of $f_{b(C)-i}f_0^{|C|-k}\prod_{j = 2}^{k} f_{t_j-t_{j-1}}$ in the leading-log coefficients. However it is easy to adapt the same approach to enumerate diagrams to specific cases where the $t_j-t_{j-1}$ are fixed.

%\begin{prop} Let $a_n[\sigma_1,\dots,\sigma_{k-1}]$ be the number of connected diagrams with $n$ chords where $b(C) = t_1 < \dots < t_{k-1} < t_k = n$ are the positions of the terminal chords and such that $\sigma_j = t_{j+1}-t_j$ for $j \in \{1,\dots,k-1\}$. Then if $k > 1$, we have for $n > \sum_{j=1}^{k-1} \sigma_j$:
%\[a_n[\sigma_1,\dots,\sigma_{k-1}] = (2\,n-3)\,a_{n-1}[\sigma_1,\dots,\sigma_{k-1}] + (2\,\sigma_{k-1}-1)!!\,a_{n-\sigma_{k-1}}[\sigma_1,\dots,\sigma_{k-2}].\]
%If $A_{[\sigma_1,\dots,\sigma_{k-1}]}(z)$ denotes $\sum_{n \geq 0} a_n([\sigma_1,\dots,\sigma_{k-1}]) z^n/n!,$ and $A_{[\sigma_1,\dots,\sigma_{k-2}]}^{[\sigma_{k-1}-1]}$ any $(\sigma_{k-1}-1)$th antiderivative of $A_{[\sigma_1,\dots,\sigma_{k-2}]}$, then there exist a constant $\beta$ and a polynomial $P$ of degree $\sum_{j=1}^{k-1} \sigma_{j}$ such that
%\begin{equation*}
%A_{[\sigma_1,\dots,\sigma_{k-1}]}(z) = \sqrt{1-2\,z} \left(\beta +  (2\,\sigma_{k-1}-1)!! \, \int_0^z (1-2x)^{-3/2} A_{[\sigma_1,\dots,\sigma_{k-2}]}^{[\sigma_{k-1}-1]}(x)\,dx   \right) + P(z).
%\end{equation*}
%If $k=1$, then $a_n[\varnothing ]=(2\,n-3)!!$ is the number of connected diagrams with only one terminal chord, and $A_{[\varnothing ]}(z)=1- \sqrt{1-2\,z}$ its generating function (cf Corollary \ref{cor:oneterm}).
%\end{prop}

\noindent  \textbf{Example.} Let us consider $A(z)$ the exponential generating function of connected diagrams such that the only terminal chords are the third to last and last ones (i.e. connected diagrams $C$ such that  $b(C)=t_1=|C|-2$ and $t_2=|C|)$), and let us use the same decomposition as in the proof of Proposition~\ref{prop:rec}. Removing the root chord in such diagrams leads to two possibilities:
\begin{itemize}
\item The resulting diagram has only one component; starting from the end, the positions of the terminal chords do not change.
\item It has several components. Since each component necessarily has at least one terminal chord, the number of components is exactly two: the top component has only one terminal chord, and the bottom component is the only connected diagram with $2$ chords (with $3$ possibilities of insertion for the root chord). 
\end{itemize}
This consideration leads to the recurrence 
\[a_n = a_{n-1} + 3\,(2\,n-7)!!,\]
where $a_n$ is the number of connected diagrams with $n$ chords such that the only terminal chords are the third to last and last ones. By the same process than previously, we can then prove that 
\begin{equation*}A(z) = (z - 1)\, \sqrt {1-2\,z}+ \frac{{z}^{2}}2-2\,z+1.
\end{equation*}
%\begin{proof}(Sketch.)  Using the same decomposition as the proof of Proposition~\ref{prop:rec}, we observe that if the deletion of the root chord leads to several connected components, then the last one (starting from top to bottom) has necessarily size $\sigma_{k-1}$ and has only terminal chord 
%\end{proof}
%
%
%Once again, the process can be automatized. 
%
%

In all generality, similar recursions exist for diagrams where the gaps between the terminal chords $t_2-t_1,\dots,t_k-t_{k-1}$ are given, but equations are more tedious to state (although the method will fundamentally remain the same). If the reader would like to compute such  generating functions, a procedure is written in the aforementioned  \texttt{maple} file.

\subsection{Asymptotic behaviour}\label{subsec asymp}

Now that we have stated how to compute the numbers $b_{n,k}$, we are interested by their a\-symptotic behaviour. The following theorem gives the asymptotic estimate.

\begin{theo} The number of connected diagrams $C$ with $n$ chords such that $b(C) \geq n - k$ is asymptotically equivalent to
\[\frac 1 {\sqrt{\pi}\,2^{k+1}\,k!} \frac {\ln(n)^k\,2^n\,n!} { n^{3/2} }.\]
\label{theo:bnk}
\end{theo}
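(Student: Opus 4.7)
The plan is to apply the transfer theorem (Theorem~\ref{theotransfer}) to the exponential generating function $B_k(z)$, whose only singularity on the closed disk $|z|\le 1/2$ is at $z=1/2$. Concretely, I would prove by induction on $k$ that
\[B_k(z) \underset{z\to 1/2^-}{\sim} \frac{(-1)^{k+1}}{2^k\,k!}\,\sqrt{1-2z}\,\bigl(\log(1-2z)\bigr)^k.\]
Once this is established, Theorem~\ref{theotransfer} with $\rho = 1/2$, $\alpha = -1/2$, $\beta = k$ (and $\Gamma(-1/2) = -2\sqrt{\pi}$) yields $[z^n]B_k(z) \sim \frac{2^n (\log n)^k}{\sqrt{\pi}\,2^{k+1}\,k!\,n^{3/2}}$, and multiplication by $n!$ gives exactly the claimed estimate for $b_{n,k}$.

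The base case $k = 0$ is immediate from $B_0(z) = 1 - \sqrt{1-2z}$, and the cases $k = 1,2$ can already be checked against the explicit formulas in Remark~1 following Proposition~\ref{prop:GFrec}. For the inductive step I would analyze \eqref{eq:GFrec} term by term. The polynomial $P_k(z)$ and the constant multiple $\beta_k\sqrt{1-2z}$ are obviously subdominant. The dominant contribution comes from the $i=1$ summand: since $c_1 = 1$ and $B^{[0]}_{k-1}=B_{k-1}$, the inductive hypothesis gives
\[(1-2x)^{-3/2}B_{k-1}(x)\;\sim\;\frac{(-1)^k}{2^{k-1}(k-1)!}\,(1-2x)^{-1}\bigl(\log(1-2x)\bigr)^{k-1}\]
as $x\to 1/2^-$. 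The substitution $u = \log(1-2x)$ integrates this explicitly to $\tfrac{(-1)^{k+1}}{2^kk!}(\log(1-2z))^k$ plus strictly lower-order terms, and the outer factor $\sqrt{1-2z}$ then produces the asymptotic claimed for $B_k$.

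For $i \geq 2$, a simple induction on the number of integrations shows that $B^{[i-1]}_{k-i}(z)$ is of singular order $(1-2z)^{i-1/2}\,(\log(1-2z))^{k-i}$ (plus a polynomial). After dividing by $(1-2x)^{3/2}$ and integrating, the resulting antiderivative is of order $(1-2z)^{i-1}\,(\log(1-2z))^{k-i}$, so multiplication by $\sqrt{1-2z}$ contributes a term of order $(1-2z)^{i-1/2}\,(\log(1-2z))^{k-i}$; for $i\ge 2$ this is strictly dominated by the $i=1$ contribution, so it does not affect the leading asymptotic.

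The main obstacle is the bookkeeping around the antiderivatives $B^{[i-1]}_{k-i}$: they are defined only up to polynomials, so one must verify that the choice of integration constants never alters the leading singular coefficient (polynomials are harmlessly absorbed into $P_k$), and that subdominant logarithmic corrections propagated through the induction really do remain strictly negligible. Concretely, this amounts to checking that the substitution $u = \log(1-2x)$ interacts cleanly with the singular expansion up to errors suppressed by an extra factor of $(1-2x)$ or a factor of $1/\log(1-2x)$, which is routine but sign-sensitive.
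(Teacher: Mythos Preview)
Your proposal is correct and follows essentially the same route as the paper: both argue by induction on $k$ through the recurrence~\eqref{eq:GFrec}, identify the $i=1$ summand as the source of the leading singular term $\frac{(-1)^{k+1}}{2^k k!}\sqrt{1-2z}\,\bigl(\ln(1-2z)\bigr)^k$ via the same explicit integration, show that the $i\ge 2$ terms are subdominant, and then apply the transfer theorem. The only difference is cosmetic: the paper packages the induction as a structural lemma asserting that $B_k$ is a genuine polynomial in $\sqrt{1-2z}$ and $\ln(1-2z)$, which makes the control of subdominant terms automatic and sidesteps the asymptotic bookkeeping you flag as the main obstacle.
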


\begin{proof} We just apply the transfer theorem (Theorem~ \ref{theotransfer}) to the exponential generating function $B_k$ characterized by the following lemma. Indeed this lemma shows that 
$B_k(z) - B_k\left(1/2\right)$ is equivalent to $\frac{(-1)^{k+1} }{2^{k}\,k!} \sqrt{1-2\,z} \ln(1-2z)^k$ when $z \rightarrow 1/2$.
\end{proof}

\begin{lem} The exponential generating function $B_k$ from Proposition~\ref{prop:GFrec} is a polynomial in terms of $\sqrt{1-2z}$ and $\ln(1-2z)$:
\begin{equation}
B_k(z) = B_k\left(\frac 1 2\right) +  \sqrt{1-2\,z}\,Q_k\left(\sqrt{1-2\,z},\ln(1-2z)\right),
 \label{eq:Bkis}
\end{equation}
where $Q_k(x,y)$ is a polynomial of degree $k$ in $y$ such that the coefficient of $y^k$ is
%\[B_k(z) = B_k\left(\frac 1 2\right) + \sum_{j=0}^k p_{k,j}(1-2z) \sqrt{1-2z} \ln(1-2z)^j  + \sum_{\ell=2}^{2k} \lambda_{k,\ell} (1-2z)^{\ell/2},\]
%where $p_{k,j}$ is a polynomial of degree at most $\max(0,j-2)$, and $\lambda_{k,ell}$ are constant. Moreover, $p_{k,k}$ is a constant polynomial and equals
 \[[y^k] \, Q_k(x,y) = \frac{(-1)^{k+1} }{2^{k}\,k!}\]
 (there is no term in $x^i$, with $i \geq 1$).
\end{lem}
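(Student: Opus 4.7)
The plan is to proceed by induction on $k$, using the explicit formula \eqref{eq:GFrec} for $B_k$ in terms of $B_0,\dots,B_{k-1}$. The base case $k=0$ is immediate from $B_0(z)=1-\sqrt{1-2z}$: take $B_0(1/2)=1$ and $Q_0(x,y)=-1$, whose coefficient of $y^0$ is indeed $(-1)^{0+1}/(2^0\,0!)$.

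For the inductive step I would first prove a structural closure lemma: the class of functions of the form "polynomial in $z$ plus $\sqrt{1-2z}$ times a polynomial in $\sqrt{1-2z}$ and $\ln(1-2z)$" is stable under antiderivation. Integration by parts on a monomial $(1-2z)^{m/2}\ln(1-2z)^{\ell}$ (with $m\geq 0$) shows that an antiderivative is again a polynomial of the same degree $\ell$ in $\ln(1-2z)$ multiplied by $(1-2z)^{(m+2)/2}$, plus lower logarithmic-degree terms. Iterating this $i-1$ times on $B_{k-i}$ shows that $B^{[i-1]}_{k-i}$ is the sum of a polynomial in $z$ and a term of the form $(1-2z)^{(2i-1)/2}\widetilde Q(\sqrt{1-2z},\ln(1-2z))$ with $\deg_y\widetilde Q\leq k-i$.

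Next I would apply the operator $\sqrt{1-2z}\int_0^z(1-2x)^{-3/2}(\,\cdot\,)\,dx$ appearing in \eqref{eq:GFrec}. The key observation for the leading-log tracking is that multiplying $(1-2z)^{(2i-1)/2}\ln(1-2z)^{k-i}$ by $(1-2z)^{-3/2}$ yields $(1-2z)^{i-2}\ln(1-2z)^{k-i}$. For $i\geq 2$ this is an analytic factor times a log-power of degree $\leq k-2$, whose antiderivative has the same log degree; these contributions therefore cannot reach $y^k$. Only $i=1$ is critical: here $B^{[0]}_{k-1}=B_{k-1}$, whose leading-log part is $\frac{(-1)^k}{2^{k-1}(k-1)!}\sqrt{1-2z}\ln(1-2z)^{k-1}$ by induction, and the substitution $u=\ln(1-2x)$ gives
\[
\int (1-2x)^{-3/2}\cdot(1-2x)^{1/2}\ln(1-2x)^{k-1}\,dx \;=\; -\frac{1}{2k}\ln(1-2z)^{k} + \text{const}.
\]
Multiplying by the outer $\sqrt{1-2z}$ and by the factor $(2\cdot 1-1)c_1=1$ produces
\[
\frac{(-1)^k}{2^{k-1}(k-1)!}\cdot\!\left(-\frac{1}{2k}\right)\sqrt{1-2z}\,\ln(1-2z)^{k} \;=\; \frac{(-1)^{k+1}}{2^{k}\,k!}\sqrt{1-2z}\,\ln(1-2z)^{k},
\]
which is precisely the claimed leading-log coefficient and shows it is a pure constant in $x=\sqrt{1-2z}$.

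All remaining contributions (the $i\geq 2$ terms, the antiderivatives of the polynomial-in-$z$ parts of the $B_{k-i}$, and the polynomial $P_k(z)$ in \eqref{eq:GFrec}) have strictly smaller degree in $\ln(1-2z)$. To conclude, I would write $B_k(z)=R(z)+\sqrt{1-2z}\,\widetilde S(\sqrt{1-2z},\ln(1-2z))$; since $\sqrt{1-2z}\,\ln(1-2z)^{\ell}\to 0$ as $z\to 1/2$, one has $R(1/2)=B_k(1/2)$, and then $R(z)-R(1/2)$ is a polynomial in $z$ vanishing at $z=1/2$, hence divisible by $(1-2z)=(\sqrt{1-2z})^2$. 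Absorbing it into $\sqrt{1-2z}\,Q_k(\sqrt{1-2z},\ln(1-2z))$ introduces no dependence on $\ln(1-2z)$, so \eqref{eq:Bkis} holds with the right top coefficient. The main obstacle is the bookkeeping in the structural lemma: one must verify both that the class described is closed under the composite operator $\sqrt{1-2z}\int(1-2x)^{-3/2}\cdot dx$ and that $i=1$ is the unique index that can raise the $\ln(1-2z)$-degree from $k-1$ up to $k$.
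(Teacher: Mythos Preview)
Your proof is correct and follows essentially the same approach as the paper: induction on $k$ via \eqref{eq:GFrec}, a closure argument showing that antiderivatives of the $B_{k-i}$ stay polynomial in $\sqrt{1-2z}$ and $\ln(1-2z)$ with log-degree $\leq k-i$, and the identification that only the $i=1$ term can raise the log-degree to $k$, yielding exactly the same leading-coefficient computation. You add a bit more detail than the paper on why $i\geq 2$ cannot contribute (the factor $(1-2z)^{i-2}$ is analytic) and on how the polynomial part is absorbed into the form \eqref{eq:Bkis}, but the argument is the same.
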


\begin{proof}This lemma can be shown by induction on $k$.

For $k=0$, we have seen that $B_0(z)= 1 - \sqrt{1-2\,z}$.

For $k \geq 1$, we have to check that the statement of the lemma is compatible with \eqref{eq:GFrec}. First, using the induction hypothesis, we verify that any $(i-1)$th antiderivative $B^{[i-1]}_{k-i}$ of $B_i$ for $i \in \ens{1,\dots,k}$  is a polynomial in  $\sqrt{1-2z}$ and $\ln(1-2z)$ such that the degree in $\ln(1-2z)$ does not exceed $k-i$. 
(To compute an antiderivative of $(1-2\,z)^{i/2} \ln(1-2\,z)^j$, we repeatedly  integrate by parts using the equality \[(i+2) \int_0^z (1-2\,x)^{i/2} \ln(1-2\,x)^j dx = -(1-2\,z)^{i/2+1} \ln(1-2\,z)^j  - 2\,j \int_0^z (1-2\,x)^{i/2} \ln(1-2\,x)^{j-1} dx\]
until the degree in $\ln(1-2\,x)$ in the integrand reaches 0.)
  Using \eqref{eq:GFrec}, it is then not hard to check that $B_k(z)$ can be put into the form \eqref{eq:Bkis}. If we search in \eqref{eq:GFrec} for what could contribute to the term in $y^k$ in $Q_k(x,y)$, we realize that the only possibility comes from the monomial  $\sqrt{1-2z}\,\ln(1-2z)^{k-1}$ in $B_{k-1}(z)$. Indeed, we can observe that
\[  \sqrt{1-2z} \int_0^z (1-2x)^{-3/2} \times \sqrt{1-2x}\,  \ln(1-2\,x)^{k-1}   \,dx = \frac{-1}{2k} \sqrt{1-2z}\,\ln(1-2z)^k.\]
  
  By recurrence, we know that the coefficient of $\sqrt{1-2z}\,\ln(1-2z)^{k-1}$ in $B_{k-1}(z)$ is $\frac{(-1)^{k} }{2^{k-1}\,(k-1)!}$, so the coefficient of $\sqrt{1-2z}\,\ln(1-2z)^k$ in $B_{k-1}(z)$ must be $\frac{-1}{2k} \times \frac{(-1)^{k} }{2^{k-1}\,(k-1)!} = \frac{(-1)^{k+1} }{2^{k}\,k!}$.
%can be put under the form
%\[B^{[i-1]}_{k-i}(z) = B^{[i-1]}_{k-i}\left(\frac 1 2\right) + \sum_{j=0}^{k-i} p_{k,i,j}(1-2z) \sqrt{1-2z} \ln(1-2z)^j  + \sum_{\ell=2}^{2k} \mu_{k,i,\ell} (1-2z)^{\ell/2},\]
%where $q_{k,i}$
\end{proof}

Once again, the foregoing does not give any information about the asymptotic distribution on the terminals chords. However we can recover it by repeating the same reasoning for the number of connected diagrams such that only the last $k$ chords for the intersection order are terminal, and observe that the asymptotic behaviour is identical.

\begin{theo} The number $o_{n,k}$ of connected diagrams $C$ with $n$ chords such that the only terminal chords are the last $k$ chords is asymptotically equivalent to
\[\frac 1 {\sqrt{\pi}\,2^{k+1}\,k!} \frac {\ln(n)^k\,2^n\,n!} { n^{3/2} }.\]
\label{theo:onk}
\end{theo}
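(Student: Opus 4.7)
The plan is to mirror the three-part strategy used in the proof of Theorem~\ref{theo:bnk}: a combinatorial recurrence, an ODE for the associated exponential generating function, and a singular analysis via the transfer theorem.

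First I would adapt the decomposition of Proposition~\ref{prop:rec}. Let $C$ be a connected diagram of size $n$ whose terminal chords in intersection order are exactly the last $k$. Removing the root either yields a still-connected diagram (contribution $(2n-3)\,o_{n-1,k}$), or a multi-component decomposition $C_1,\dots,C_s$ with $s \geq 2$. In the latter case, every terminal chord of $C$ lies in the last $k$ positions and every component must contain at least one terminal chord, so the last component $C_s$ has all of its chords terminal; since the root of a connected diagram of size $\geq 2$ is never terminal, this forces $|C_s|=1$, and the residual $C \setminus C_s$ is an $o_{n-1,k-1}$ diagram. This produces
\[
o_{n,k} = (2n-3)\,o_{n-1,k} + o_{n-1,k-1}
\]
for $n$ large enough, with a finite number of boundary exceptions absorbed into polynomial corrections downstream.

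Next, following Proposition~\ref{prop:GFrec}, dividing the recurrence by $(n-1)!$ and reading off coefficients of $O_k'$, $zO_k'$, $O_k$, and $O_{k-1}$ in the exponential generating function $O_k(z) = \sum_n o_{n,k}\,z^n/n!$ produces the linear ODE
\[
(1-2z)\,O_k'(z) + O_k(z) = O_{k-1}(z) + Q_k(z),
\]
with $Q_k$ polynomial of degree at most $k-1$. Dividing by $(1-2z)^{3/2}$ and recognizing the left side as the derivative of $(1-2z)^{-1/2}\,O_k(z)$, integration gives
\[
O_k(z) = \sqrt{1-2z}\,\Bigl(\beta_k + \int_0^z (1-2t)^{-3/2}\,(O_{k-1}(t) + Q_k(t))\,dt\Bigr) + P_k(z),
\]
where $P_k$ is polynomial of degree at most $k$.

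Then I would imitate the induction of the lemma following Theorem~\ref{theo:bnk} to show that $O_k(z) - O_k(1/2) \sim \frac{(-1)^{k+1}}{2^k\,k!}\,\sqrt{1-2z}\,\ln(1-2z)^k$ as $z \to 1/2$. The dominant singular contribution to the integral comes from the $\sqrt{1-2t}\,\ln(1-2t)^{k-1}$ term of $O_{k-1}$: since $(1-2t)^{-3/2}\sqrt{1-2t}\,\ln(1-2t)^{k-1} = (1-2t)^{-1}\,\ln(1-2t)^{k-1}$ has antiderivative $-\ln(1-2z)^k/(2k)$, multiplying by the external $\sqrt{1-2z}$ yields the next log power together with the inductive factor $-1/(2k)$. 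Invoking the transfer theorem (Theorem~\ref{theotransfer}) on this singular term then produces the claimed asymptotic.

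The main obstacle is verifying that the leading singular coefficient truly coincides with the one for $B_k$. Compared with the $b_{n,k}$ recurrence, the $o_{n,k}$ recurrence drops the multi-component contributions with $|C_s| \geq 2$; one must check that those dropped terms contribute only strictly subleading powers of $\ln(1-2z)$ to the singular expansion, so that the top-logarithm coefficient of $O_k$ is identical to that of $B_k$ and the transfer theorem delivers the same constant $1/(\sqrt\pi\,2^{k+1}\,k!)$.
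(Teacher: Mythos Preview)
Your proposal is correct and follows essentially the same route as the paper: the same recurrence $o_{n,k}=(2n-3)\,o_{n-1,k}+o_{n-1,k-1}$, the same first-order ODE for $O_k$, the same integral representation, and the same induction on the top log-power via $\int (1-2t)^{-1}\ln(1-2t)^{k-1}\,dt$. Your final paragraph is an unnecessary worry: the induction you outline computes the leading coefficient of $O_k$ directly, so no comparison with $B_k$ or bookkeeping of the dropped $|C_s|\ge 2$ terms is needed.
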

\begin{proof} (Sketch.) Using the decomposition of the proof of Proposition~\ref{prop:rec}, we find that the numbers $o_{n,k}$ satisfy
\[o_{n,k} = (2\,n-3)\,o_{n-1,k} + o_{n-1,k-1}. \]
This recurrence relation can be then translated into the differential equation 
\[ (1-2\,z) \pd {O_k} z + O_k = O_{k-1} + \widetilde Q_k(z), \]
where $O_k(z) = \sum_{n \geq 0} o_{n,k}\,z^n/n!$ and $\widetilde Q_k$ a polynomial of degree $k$. Its solutions can be put into the form
\[
O_k(z) = \sqrt{1-2\,z} \left( \widetilde\beta_k + \int_0^z (1-2x)^{-3/2} O_{k-1}(x)\,dx   \right) + \widetilde P_k(z),
\]
where $\widetilde \beta_k$ is a constant and $\widetilde P_k(z)$ a polynomial. By recurrence, we can then prove that $O_k$ is a polynomial in $\sqrt{1-2z}$ and $\ln(1-2z)$ such that the contributing term for the singularity analysis is $\frac{(-1)^{k+1} }{2^{k}\,k!} \sqrt{1-2z} \ln(1-2z)^k$. We recover the expected asymptotic regime by the transfer theorem.
\end{proof}

The consequence of the similarity between Theorem~\ref{theo:bnk} and Theorem~\ref{theo:onk} will be described in the next subsection.

\subsection{Application to the log expansions}

The leading log expansion is particularly simple because it only counts chord diagrams where only the last chord is terminal.  By Corollary \ref{cor:oneterm}, these are easy to count, and the monomial in the $f_i$ is simply a power of $f_0$. Therefore it suffices to understand $B_0(z)$.  Specifically, the leading log expansion is
\begin{equation}\label{leading log eq}
   B_0(Lxf_0) = 1 - \sqrt{1-2Lxf_0}
\end{equation}

The next-to-leading log expansion is not too difficult either.  
%We need to consider chord diagrams where the last chord is the only terminal and chord diagrams where the second last chord is also terminal.  The former type of diagram contribute monomials of the form $f_1f_0^{|C|-1}$ while the latter contribute $f_{0}f_0^{|C|-2}f_{1}$ which is the same monomial obtained differently.  
By \eqref{eq:ntlle} it suffices to understand $B_1(z)$.  Note, however, that the power of $Lxf_0$ is $|C|-1$, and we are dividing by $(|C|-1)!$, so the next-to-leading log expansion is actually given in terms of the derivative of $B_1(z)$.  Specifically, the next-to-leading log expansion is
\begin{equation}\label{next to leading log eq}
  \frac{d}{dz}B_1(z)|_{z = Lxf_0} xf_1 = xf_1\left(1 + \frac{1}{\sqrt{1-2Lxf_0}}\ln\left(\frac{1}{\sqrt{1-2Lxf_0}}\right)\right)
\end{equation} 

The next-to-next-to-leading log expansion is a bit more complicated.  Here we are considering any chord diagram with $b(C)\geq |C|-2$.  Now there are different possible monomials.  If all of the last three chords are terminal then we get $f_0f_0^{|C|-3}f_1^2$ while if only the last and the third last are terminal we get $f_0f_0^{|C|-2}f_2$.  If only the last two chords are terminal we get $f_1f_0^{|C|-2}f_1$ and if only the last chord is terminal we get $f_2f_0^{|C|-1}$.  All together two different monomials appear, $f_2f_0^{|C|-1}$ in the case that either the last and third last or just the last are terminal, and $f_1^2f_0^{|C|-2}$ in the case that either the last two or the last three are all terminal.  In all cases we will need to take two derivatives since the powers and factorials are in terms of $|C|-2$ for the next-to-next-to leading log expansion rather than in terms of $|C|$ for the exponential generating functions $B_k$.  
			
			Using the $A(z)$ from the example in Subsection~\ref{Bk sub sec} we can calculate the next-to-next-to-leading log expansion explicitly:
			\begin{multline*}
			   x^2f_2f_0\left(\frac{d^2}{dx^2}(A(z)+B_0(z))\right)\bigg|_{z=Lxf_0} + x^2f_1^2\left(\frac{d^2}{dx^2}(B_2(z)-A(z)-B_0(z))\right)\bigg|_{z=Lxf_0} \\
			   = x^2f_0f_2\left(1+ \frac{3Lxf_0}{(1-2Lxf_0)^{3/2}}\right) + \frac{x^2f_1^2(\ln(1-2Lxf_0)-4)\ln(1-2Lxf_0)}{8(1-2Lxf_0)^{3/2}}.
			\end{multline*}
Latter log expansions work similarly.

Let us compare these results to the results of Kr\"uger and Kreimer in \cite{KKllog}.  Their methods are also combinatorial but are quite different.  They are based on words on the alphabet of primitive graphs operated on by shuffle and Lie bracket.  Despite these differences we are both modelling the same underlying physics, so our answers should agree on the common domain of applicability.  
			
			Our results correspond to their Yukawa case with only one primitive.  In fact we deal with any Dyson-Schwinger equation with this shape.  They could also do so, but chose to only make the Yukawa and QED examples explicit.  On the other hand their work is more general in that they deal with any number of primitives in the Yukawa and QED example.  In view of \cite{HYchord} our results should also generalize to any number of primitives and to Dyson-Schwinger equations of QED shape and other shapes (corresponding to different $s$ parameters in the setup of \cite{Ymem}).  This will be worked out in the future.

Our leading log calculations are, as they should be, identical (compare \eqref{leading log eq} to Equation 221 of \cite{KKllog}).   The next-to-leading log (compare \eqref{next to leading log eq} and Equation 227 of \cite{KKllog}) are very similar.  First in our case we are not considering a new primitive graph at 2 loops, which in the language of Kr\"uger and Kreimer would say that $\Phi_R(\Gamma_2)= 0$.  
%Our methods should generalize to multiple primitives, as mentioned above, but this remains to be done.  
The second thing to notice is a spurious $1$ in the derivative of $B_1(z)$. Its presence is due to different boundary conditions.  They explicitly set their generating function to have no constant term (see the line after Equation 146) while our boundary conditions are determined by the chord diagrams: this particular 1 corresponds to the connected chord diagram with two chords.  Finally, note that they have a more complicated expression in place of our $f_1$.  In both cases this number is the new period.  Kr\"uger and Kreimer call it $\Theta(a_1, a_1)$; they note that it cannot be canonically identified with a single Feynman graph.  From our perspective we see it naturally as the next term in the expansion for the original primitive. 

Turning to the next-to-next-to-leading log expansion, we again see that our solution is built on of the same kinds of pieces as theirs.  Their greater generality shows up more strongly here as our solution is strictly simpler.  We also see more clearly at this level how our different perspectives result in different characterizations of the new primitives.

What are the benefits and disadvantages of our techniques compared to the techniques of Kr\"uger and Kreimer?  Both methods have a combinatorially derived master equation that determines everything.  For us this would be the recursive decomposition of the previous sections -- we did not write it out as an integral or differential equation in general (only in the important special case of Proposition \ref{prop master eq}), but the example in Subsection~\ref{Bk sub sec} illustrates how it works in general.  For both groups the master equation is not fully explicit.  In Kr\"uger and Kreimer's set up this manifests itself in the dependence on matrix bracket coefficients for which it is unclear how automatically or rapidly they can be computed.  The lack of explicitness however has a different flavour in each case coming from the different combinatorial objects.  %Furthermore, our approach leads to fully explicit special cases as in Proposition \ref{prop master eq} and the asymptotics of Subsection \ref{subsec asymp}.

Our technique also differs in how it indexes the periods which contribute to the expansions. Kr\"uger and Kreimer tie them to individual graphs where possible and treat the others, coming from their $\Theta$-expressions on the same level.  We do not give these periods individual meanings but see them as coming from later terms in the expansion our one primitive; this makes our periods less combinatorial, but they are organized into tidy monomials so one can better see the different pieces that build them.  Here both techniques have advantages and one would hope to play them off each other to get an even better understanding.  The same can be said about the different underlying combinatorial frameworks -- the physics is described both by our chord diagrams and by their words and it is not obvious, but is potentially useful, that both these objects describe the same underlying structures.  

\medskip

Finally, we can consider the significance of the asymptotic results of Subsection~\ref{subsec asymp} to the log expansions.  Here something very interesting happens.  The chord diagrams where the only terminal chords are the last $k$ chords dominate completely in the sense that as $n\rightarrow \infty$ almost all chord diagrams with $n$ chords and $b(C)\geq n-k$ have the last $k$ chords terminal.  What this means is that provided $F(\rho)$ is not outrageous (eg the $f_i$ are bounded) we should expect the chord diagrams with the last $k$ chords terminal to completely determine the asymptotic behaviour of the next-to${}^k$-leading log expansion.  That is, the next-to${}^k$-leading log expansion should behave as if all chord diagrams contribute the monomial $f_0^{|C|-k+1}f_1^{k-1}$, so the asymptotic behaviour of the next-to${}^k$-leading log expansion is given by
\begin{equation}
\frac{d^k}{dz^k}B_k(z)|_{z = Lxf_0}f_1^{k-1}
\label{eq:f0f1}
\end{equation}
This is nice for two reasons.  First it says that the other $f_i$ are not playing a significant role asymptotically -- this means only two numbers, $f_0$ and $f_1$, are controlling their asymptotic behaviours.  Second the master equation to generate the $B_k(z)$ is fairly simple and can be computed fully automatically.  This is much simpler than the situation for chord diagrams with specific gap patterns as we calculated for the next-to-next-to-leading log expansion and contains no mysteries which require human intervention to compute.

\section{Statistics on terminal chords}
\label{sec:stat}

\subsection{Statement of the meta-theorem and examples}

In this section, we study several statistics concerning terminal chords in connected diagrams, such like its numbers, the number of terminal chords that are consecutive for the intersection order, etc. We establish a meta-theorem that shows that a lot of random variables on connected chord diagrams have a Gaussian limit law with logarithmic variance.

Before stating this theorem, we need to define three subsets of connected chords diagrams based on the shape of the diagram obtained by removing the root chord:
\begin{align*}
\mathcal C_1 = & \left\{C \ |\ \textrm{Removing the root chord from }C\textrm{ leads to a unique component }C_1.\right\}, \\
\mathcal C_2 = & \left\{C \ |\ \textrm{Removing the root chord
% from }C\textrm{ 
leads to a chord and a component }C_2\textrm{, top to bottom.}\right\},\\
\mathcal C_3 = & \left\{C \ |\ \textrm{Removing the root chord
% from }C\textrm{ 
leads to a component }C_3\textrm{ and a chord, top to bottom.}\right\}.
\end{align*}
These three subsets are illustrated by Figure~\ref{fig:subsets}.
% The set $\mathcal C_1$ is the set of connected chord diagrams in which the diagram obtained by removing the root chord has a unique connected component $C_1$; the set $\mathcal C_2$ comprises every diagram such that the previous resulting diagram is formed of an isolated chord and a connected component $C_2$ below it; $\mathcal C_3$ comprises every diagram such that the resulting diagram is formed of an isolated chord and a connected component $C_3$ above it.

\begin{figure}[ht!]
\begin{center}
\includegraphics[width=0.8\textwidth]{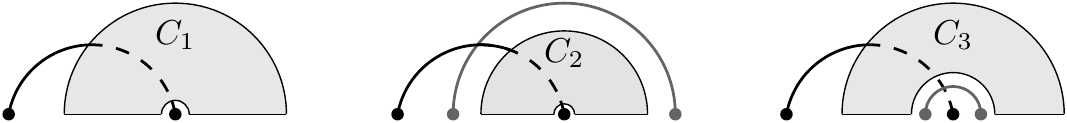}
\end{center}
\caption{From left to right, a schematic representation of an element of $\mathcal C_1$, $\mathcal C_2$, $\mathcal C_3$.}
\label{fig:subsets}
\end{figure}

\begin{theo} \label{theo:bigone} Let $\lambda_1$, $\lambda_2$, $\lambda_3$ be three integers (not all equal). Consider a function $f$ on connected chord diagrams such that for every $C \in \mathcal C_i$ with $i = 1,2,3$, $f(C) = f(C_i) + \lambda_i$ (see above for the definition of $\mathcal C_1, \mathcal C_2, \mathcal C_3$). If $X_n$ denotes a random chord diagram of size $n$ under the uniform distribution, then $f(X_n)$ is a random variable such that $\dfrac{f(X_n) - \lambda_1\,n - \mu\,\ln(n)}{\sigma \sqrt{\ln(n)}}$ converges in distribution to a standard Gaussian law, where
\[\mu =  \frac {\lambda_2} 2 + \frac {\lambda_3} 2 - \lambda_1,\quad \sigma^2 = \frac {(\lambda_2-\lambda_1)^2} 2 + \frac {(\lambda_3-\lambda_1)^2} 2.\]
\end{theo}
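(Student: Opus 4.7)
The plan is to invoke the Quasi-Powers Theorem (Theorem~\ref{theo:qp}) applied to the bivariate exponential generating function
\[ A(z,u) \;=\; \sum_{C \text{ connected}} \frac{z^{|C|}}{|C|!}\,u^{f(C)}, \]
so that the probability generating function of $f(X_n)$ is $[z^n]A(z,u) / [z^n]A(z,1)$ and asymptotics of $f(X_n)$ reduce to a singular analysis of $A(z,u)$ near the singularity $z = 1/2$ of the EGF of $c_n$.

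To derive a functional equation for $A(z,u)$, I would apply the standard decomposition of a connected chord diagram obtained by removing its root chord, exactly as in the proof of Proposition~\ref{prop:rec}. The cases $\mathcal{C}_1,\mathcal{C}_2,\mathcal{C}_3$ contribute terms bearing prefactors $u^{\lambda_1}, u^{\lambda_2}, u^{\lambda_3}$ coming from the rule $f(C) = f(C_i) + \lambda_i$; the residual class $\mathcal{C}_\mathrm{other}$ (three or more components after removing the root, or two components neither of which is a singleton chord) contributes a remainder. Solving the resulting linear ODE for $A(z,u)$, by analogy with the explicit solution in Proposition~\ref{prop:GFrec}, I expect a singular expansion of the form
\[ A(z,u) \;=\; A_0(z,u) + A_1(z,u)\,\bigl(1 - 2 u^{\lambda_1} z\bigr)^{1/2 - \alpha(u)}, \]
possibly with an additional polynomial factor in $\ln(1-2u^{\lambda_1}z)$ as in the lemma following Theorem~\ref{theo:bnk}. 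Here $\alpha(u)$ is analytic at $u = 1$ with $\alpha(1) = 0$, and the position of the singularity at $z = 1/(2u^{\lambda_1})$ captures the linear drift $\lambda_1 n$ of $f(X_n)$.

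After the substitution $z \mapsto z u^{-\lambda_1}$, which pins the singularity at $z = 1/2$, I would apply Theorem~\ref{theo:qp}: the mean of $f(X_n) - \lambda_1 n$ is then $\alpha'(1)\ln n$ and the variance is $(\alpha'(1)+\alpha''(1))\ln n$. Identifying $\alpha'(1)$ and $\alpha''(1)$ from the explicit solution of the ODE and matching them with $\mu = \lambda_2/2 + \lambda_3/2 - \lambda_1$ and $\sigma^2 = (\lambda_2-\lambda_1)^2/2 + (\lambda_3-\lambda_1)^2/2$ finishes the proof.

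The main obstacle is the treatment of $\mathcal{C}_\mathrm{other}$: since the hypothesis only prescribes $f$ on $\mathcal{C}_1 \cup \mathcal{C}_2 \cup \mathcal{C}_3$, one must either show that the contribution of $\mathcal{C}_\mathrm{other}$ is subdominant in the singular analysis of $A(z,u)$ (which is plausible in view of the Poisson-limit heuristics of Flajolet--Noy on component counts in random chord diagrams) or iterate the decomposition within $\mathcal{C}_\mathrm{other}$ with explicit control of the error terms. A secondary difficulty is that the singularity at $z=1/2$ is of square-root type, possibly decorated by logarithms, rather than a clean $(1-z)^{-\alpha(u)}$, so Theorem~\ref{theo:qp} must be applied with care to the singular part only, supplemented where needed by a direct coefficient extraction via the Transfer Theorem (Theorem~\ref{theotransfer}).
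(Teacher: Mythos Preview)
Your plan has a genuine gap, and it is precisely the one you flag yourself: the class $\mathcal C_{\mathrm{other}}$ cannot be handled analytically in the way you suggest. The hypothesis places \emph{no} constraint whatsoever on $f$ outside $\mathcal C_1\cup\mathcal C_2\cup\mathcal C_3$, so the bivariate series $A(z,u)$ depends on arbitrary data. For $u$ in a complex neighbourhood of $1$ there is no reason the ``other'' contribution should be analytic, let alone subdominant at the singularity; the Flajolet--Noy heuristic tells you the \emph{count} of such diagrams is small, but says nothing about $\sum_{C\in\mathcal C_{\mathrm{other}}} u^{f(C)}z^{|C|}/|C|!$ when $f$ is unconstrained. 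The paper in fact remarks explicitly (just after Corollary~\ref{cor:g1n}) that the direct generating-function route leads to a nonlinear ODE from which nothing can be extracted.

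The paper's argument avoids this entirely. It first reduces to $\lambda_1=0$, then works with the \emph{probabilities} $p_{n,k}=\mathbb P(f(X_n)=k)$ rather than with counts. Lemma~\ref{lem:asympteq} gives the asymptotic recurrence
\[
p_{n,k}=\Bigl(1-\tfrac1n\Bigr)p_{n-1,k}+\tfrac{1}{2n}\bigl(p_{n-2,k-\lambda_2}+p_{n-2,k-\lambda_3}\bigr)+O(n^{-2}),
\]
where the $O(n^{-2})$ absorbs all of $\mathcal C_{\mathrm{other}}$ at once. One then drops the error term to define an auxiliary sequence $(q_{n,k})$ satisfying the \emph{exact} recurrence; its ordinary generating function $Q(z,u)$ solves a linear first-order ODE whose explicit solution has the form $A(z,u)+B(z,u)(1-z)^{-\alpha(u)}$ with $\alpha(u)=\tfrac12(u^{\lambda_2}+u^{\lambda_3})$, and Theorem~\ref{theo:qp} applies directly (singularity at $z=1$, not $z=1/2$). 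A separate error-propagation lemma (Lemma~\ref{l:diff}) shows that $\sum_k|p_{n,k}-q_{n,k}|$ is controlled by the tail $\sum_{\ell\geq n_0}M/\ell^2$, which can be made smaller than any prescribed $\varepsilon$ by choosing the initial index $n_0$ large; matching the initial conditions $q_{n_0,k}=p_{n_0,k}$ then transfers the Gaussian limit from $Y_n$ to $f(X_n)$. The key idea you are missing is this two-layer structure: an exact model amenable to Quasi-Powers, plus a coefficient-level coupling argument that bypasses any need to understand $A(z,u)$ analytically.
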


Among other things, this theorem implies that the relevant diagrams under the uniform distribution are those whose recursive decomposition only uses diagrams from $\mathcal C_1, \mathcal C_2$ and $\mathcal C_3$. The other diagrams are asymptotically negligible (the proof of this theorem just uses this fact).

Let us illustrate Theorem \ref{theo:bigone} with some examples.  If we denote by $T_n$ the random variable on connected diagrams with $n$ chords that counts the terminal chords, we can see that $T_n = f(X_n)$  where $f$ and $X_n$ are described in the statement of the theorem with $\lambda_1 = 0$ and $\lambda_2 = \lambda_3 = 1$. (Only diagrams from $\mathcal C_2$ and $\mathcal C_3$ have a decomposition which induces terminal chords -- the terminal chords correspond to the dark-grey ones in Figure~\ref{fig:subsets}.) Consequently, we have the following corollary.
\begin{cor}
 The random variable $T_n$ for the number of terminal chords asymptotically has a Gaussian limit law with a mean and a variance equivalent to $\ln(n)$.
 \label{cor:tn}
\end{cor}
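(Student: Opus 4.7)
The plan is to apply the meta-theorem (Theorem~\ref{theo:bigone}) to the function $f = T$ counting terminal chords, with $(\lambda_1,\lambda_2,\lambda_3) = (0,1,1)$.

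First I would record a preliminary observation: for any connected diagram $C$ with $n \geq 2$ chords, the root chord itself is never terminal. Writing the root as $\{1,b\}$, every other chord $\{c,d\}$ satisfies $c > 1$, so the only possible intersection-graph edges incident to the root are outgoing, and by connectedness at least one such edge exists. Hence, in the root-removal decomposition underlying Theorem~\ref{theo:bigone}, all terminal chords of $C$ live among the components obtained after deletion of the root.

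Next I would determine the three constants $\lambda_i$. For $C \in \mathcal C_1$ the unique remaining component $C_1$ has exactly the same terminal chords as $C$, so $T(C) = T(C_1)$ and $\lambda_1 = 0$. For $C \in \mathcal C_2$ (and symmetrically for $C \in \mathcal C_3$), removing the root yields the large component $C_2$ together with an isolated single chord $c_\star$. Since $c_\star$ is isolated in the intersection graph of $C$ minus the root, all its intersection-graph edges in $C$ must go to the root; by the preliminary observation these edges are incoming. Therefore $c_\star$ has no outgoing edges and is terminal in $C$, while the remaining terminal chords of $C$ come unchanged from $C_i$. This gives $T(C) = T(C_i) + 1$, and hence $\lambda_2 = \lambda_3 = 1$.

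Since the three $\lambda_i$ are not all equal, Theorem~\ref{theo:bigone} applies, and the stated formulas yield $\mu = \tfrac{1}{2} + \tfrac{1}{2} - 0 = 1$ and $\sigma^2 = \tfrac{1}{2} + \tfrac{1}{2} = 1$. Thus $T_n$ is asymptotically Gaussian with both mean and variance equivalent to $\ln n$, as claimed. The only delicate point in the argument is the justification that $c_\star$ is terminal in $C$; once one combines its post-removal isolation with the fact that the root admits only outgoing intersection-graph edges, the corollary reduces to plugging constants into the meta-theorem.
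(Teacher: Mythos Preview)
Your proof is correct and follows exactly the paper's approach: apply Theorem~\ref{theo:bigone} with $(\lambda_1,\lambda_2,\lambda_3)=(0,1,1)$ and read off $\mu=\sigma^2=1$. You simply give a more careful justification than the paper does for why these are the right constants, in particular the argument that the isolated chord $c_\star$ is terminal and that terminality of chords inside $C_i$ is preserved (since the root has only outgoing intersection edges and $c_\star$ shares none with $C_i$).
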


Now let us consider $G_{1,n}$, the random variable on connected diagrams with $n$ chords that counts the pairs of terminal chords that are adjacent in the intersection order. Equivalently, $G_{1,n}$ counts the number of terminal chords $c$ such that the chord that precedes $c$ in the intersection order is also terminal. We can then notice that decompositions of diagrams from $\mathcal C_1$ and $\mathcal C_2$ do not induce such terminal chords (for the former, the only apparent chord is not terminal; for the latter; the chord that precedes the terminal chord is the root chord, which is not terminal), while decompositions for $\mathcal C_3$ do (the chord that precedes the terminal chord is the last chord of $C_3$ which is terminal -- the last chord of a connected diagram is always terminal). Therefore, we have $G_{1,n} = f(X_n)$ with $\lambda_1 = \lambda_2 = 0$ and $\lambda_3=1$, which gives the following result.

\begin{cor}
\label{cor:g1n}
 The random variable $G_{1,n}$ for the pairs of terminal chords that are adjacent for the intersection order has a Gaussian limit law with a mean and a variance  asymptotically  equivalent to $\dfrac {\ln(n)} 2$.
\end{cor}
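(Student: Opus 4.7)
The plan is to deduce Corollary~\ref{cor:g1n} as a direct application of Theorem~\ref{theo:bigone}, the only work being to identify $G_{1,n} = f(X_n)$ for a suitable $f$ satisfying the recursion $f(C) = f(C_i) + \lambda_i$ on $\mathcal{C}_i$, with $\lambda_1 = 0$, $\lambda_2 = 0$, $\lambda_3 = 1$. Concretely, I take $f(C)$ to be the number of terminal chords $c$ of $C$ whose predecessor in the intersection order is also terminal (so that $G_{1,n} = f(X_n)$), and I need to verify the recursion by case analysis on the recursive decomposition of $C$.

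For $C \in \mathcal{C}_1$, the intersection order of $C$ is the root followed by the intersection order of $C_1$; the root is non-terminal because it must cross at least one chord of $C_1$ (the single component obtained after removing the root), so the terminal chords of $C$ and their adjacency pattern coincide with those of $C_1$, giving $\lambda_1 = 0$. For $C \in \mathcal{C}_2$, the intersection order is root, then the isolated chord, then the chords of the non-trivial $C_2$; the isolated chord is terminal, but its predecessor (the root) is not terminal, and its successor (the root of $C_2$) is non-terminal in $C$ because it crosses other chords of $C_2$. Hence no new adjacent terminal pair appears at the interface and $\lambda_2 = 0$. For $C \in \mathcal{C}_3$, the intersection order is root, then $C_3$, then the isolated chord; since the last chord of any connected diagram is automatically terminal, the last chord of $C_3$ is terminal, and it is immediately followed by the trailing isolated chord, which is also terminal. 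This contributes exactly one new adjacent terminal pair that was absent in $C_3$, so $\lambda_3 = 1$.

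Since $(\lambda_1,\lambda_2,\lambda_3) = (0,0,1)$ are not all equal, Theorem~\ref{theo:bigone} applies and yields
\[\mu = \frac{\lambda_2}{2} + \frac{\lambda_3}{2} - \lambda_1 = \frac{1}{2}, \qquad \sigma^2 = \frac{(\lambda_2 - \lambda_1)^2}{2} + \frac{(\lambda_3 - \lambda_1)^2}{2} = \frac{1}{2},\]
so the mean $\lambda_1 n + \mu \ln n$ reduces to $(\ln n)/2$ and the variance $\sigma^2 \ln n$ to $(\ln n)/2$, exactly as claimed. The only subtle point is the case analysis: one has to be careful that the root of $C_2$ (respectively $C_3$) remains non-terminal when viewed as a chord of $C$, which hinges on the non-triviality of these components built into the definition of $\mathcal{C}_2$ and $\mathcal{C}_3$, and on the fact that a chord inherits its outgoing edges from any sub-diagram it sits in. Once the recursion is certified, there is no further analytic work — the corollary is a purely mechanical specialization of the meta-theorem.
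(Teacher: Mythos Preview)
Your proof is correct and takes essentially the same approach as the paper: verify that $G_{1,n} = f(X_n)$ with $(\lambda_1,\lambda_2,\lambda_3) = (0,0,1)$ via the recursive decomposition, then invoke Theorem~\ref{theo:bigone}. Your case analysis is in fact a bit more careful than the paper's (you explicitly rule out a spurious adjacent pair at the interface in the $\mathcal C_2$ case by noting the root of $C_2$ is non-terminal); the only minor inaccuracy is that non-triviality of $C_2$, $C_3$ is not literally built into the definitions of $\mathcal C_2$, $\mathcal C_3$, but it holds whenever $|C|\geq 4$, which is all the asymptotic statement needs.
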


\noindent \textbf{Remark.}
It is worth noting that the standard theory cannot be used directly. Indeed, the main obstacle is the non-analyticity of the ordinary generating functions, which \textit{a priori} prevents any use of complex analysis. For instance, if we consider $C(z,u)$ the generating function of connected diagrams, where $z$ refers to the number of chords and $u$ to the number of terminal chords, this series satisfies the differential equation (which can be established by a straighforward combinatorial specification -- see \cite{flajolet-sedgewick})
\[C(z,u) = z\,u + z\, \frac{2z\pd C z (z,u) - C(z,u)}{1 - 2z\pd C z (z,u) + C(z,u)}.\]
We can solve this non-linear differential equation (to some extent -- the solution can be implicitly defined in terms of the Whittaker functions) but it seems to be impossible to deduce anything from there.

\subsection{Proof of Theorem \ref{theo:bigone}}

First of all, remark that we can assume that $\lambda_1=0$ without any lost of generality. Indeed, we can study $\tilde f(X_n) := f(X_n)-\lambda_1\,n$ instead of $f(X_n)$. The new function $\tilde f$ satisfies the conditions of Theorem~\ref{theo:bigone} where the new set of parameters $\tilde \lambda_1, \tilde \lambda_2, \tilde \lambda_3$ is equal to $0$, $\lambda_2 - \lambda_1$, $\lambda_3 - \lambda_1$. From the rest of this subsection, we suppose $\lambda_1=0$.

\newcommand{\prob}[1]{\mathbb P \left(#1\right) }
\newcommand{\ev}[1]{\mathbb E \left(#1\right) }

Before presenting the idea of the proof of Theorem~\ref{theo:bigone}, let us state an asymptotic equation governing the probabilities $\prob{f(X_n) = k}$, that we denote shorthand $p_{n,k}$. 

\begin{lem} Let $f$ and $X_n$ as stated by Theorem~\ref{theo:bigone} with $\lambda_1=0$, and let $p_{n,k}$ denote $\prob{f(X_n) = k}$. Then, when $n$ goes to infinity,
\begin{equation}p_{n,k} = \left(1 - n^{-1} \right) \,p_{n-1,k} + \frac{n^{-1}} 2 \,\left(p_{n-2,k-\lambda_2} + p_{n-2,k-\lambda_3}\right) + O\left(n^{-2}\right).
\label{eq:pnk}
\end{equation}
\label{lem:asympteq}
\end{lem}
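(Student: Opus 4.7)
The plan is to partition the size-$n$ connected chord diagrams into $\mathcal C_1$, $\mathcal C_2$, $\mathcal C_3$ and the complement, and to combine this with the hypothesis $f(C) = f(C_i) + \lambda_i$ on $\mathcal C_i$ (with $\lambda_1 = 0$). Using the same root-removal decomposition as in the proof of Proposition~\ref{prop:rec}, the set $\mathcal C_1$ has cardinality $(2n-3)\,c_{n-1}$, since a diagram in $\mathcal C_1$ is specified by its inner component $C_1$ (connected, of size $n-1$) together with one of $2n-3$ positions for the root chord. For $\mathcal C_2$ and $\mathcal C_3$ the non-trivial component has size $n-2$, and a case analysis along the $i = n-2$ and $i = 1$ branches of the recurrence of Proposition~\ref{prop:rec} yields $|\mathcal C_2| = |\mathcal C_3| = (2n-5)\,c_{n-2}$: in both cases the whole diagram is reconstructed from a connected component of size $n-2$ together with $2n-5$ positional data (the $2(n-2)-1$ positions of the root chord through the big component in the $\mathcal C_2$ case, and the $2(n-1)-3$ choices coming from the intermediate $\mathcal C_1$-diagram of size $n-1$ in the $\mathcal C_3$ case).

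Next I would apply Proposition~\ref{prop:expansion} to obtain
\[
\frac{(2n-3)\,c_{n-1}}{c_n} = 1 - \frac{1}{n} + O\!\left(\frac{1}{n^2}\right), \qquad \frac{(2n-5)\,c_{n-2}}{c_n} = \frac{1}{2n} + O\!\left(\frac{1}{n^2}\right).
\]
Summing these three fractions gives $|\mathcal C_1|/c_n + |\mathcal C_2|/c_n + |\mathcal C_3|/c_n = 1 + O(n^{-2})$, so the diagrams outside $\mathcal C_1 \cup \mathcal C_2 \cup \mathcal C_3$ (those producing at least three components, or two non-singleton components, after root removal) form a fraction $O(n^{-2})$ of all size-$n$ diagrams.

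To conclude, I would split $p_{n,k} = \mathbb{P}(f(X_n) = k)$ according to the class of $X_n$. Since the maps $\mathcal C_i \to \{\text{connected diagrams of the relevant smaller size}\}$ have constant multiplicity ($2n-3$ for $\mathcal C_1$, $2n-5$ for $\mathcal C_2$ and $\mathcal C_3$), the conditional distribution of $C_i$ given $X_n \in \mathcal C_i$ is uniform on the corresponding set. The $\mathcal C_1$ contribution is therefore $\frac{(2n-3)\,c_{n-1}}{c_n}\,p_{n-1,k}$ (using $\lambda_1 = 0$), and the $\mathcal C_i$ contribution for $i \in \{2,3\}$ is $\frac{(2n-5)\,c_{n-2}}{c_n}\,p_{n-2,\,k-\lambda_i}$. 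The complementary contribution is bounded by $O(n^{-2})$ since $p_{n,k} \leq 1$. Substituting the asymptotic expansions above yields exactly~\eqref{eq:pnk}.

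The main technical point is the asymptotic precision required. The leading Stein--Everett estimate $c_{n-1}/c_n \sim 1/(2n)$ alone would only give $(2n-3)\,c_{n-1}/c_n = 1 + O(n^{-1})$, too weak to reach the $(1 - n^{-1})\,p_{n-1,k}$ term of the lemma; it is the second-order refinement $c_{n-1}/c_n = 1/(2n) + 1/(4n^2) + \cdots$ of Proposition~\ref{prop:expansion} that extracts the $-n^{-1}$ coefficient. Everything else is combinatorial bookkeeping of the multiplicities identified in the first step.
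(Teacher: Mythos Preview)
Your proof is correct and follows essentially the same route as the paper's: both partition according to $\mathcal C_1,\mathcal C_2,\mathcal C_3$ and the complement, compute $|\mathcal C_i|/c_n$ via Proposition~\ref{prop:expansion}, and use that the map $C\mapsto C_i$ has constant fibres so that the conditional law of $C_i$ is uniform. Your write-up is in fact slightly more detailed than the paper's (which simply asserts $\prob{X_n\in\mathcal C_2}=\prob{X_n\in\mathcal C_3}=(2n-5)c_{n-2}/c_n$ without the $i=n-2$ versus $i=1$ case analysis), and your closing remark on why the second-order term of Proposition~\ref{prop:expansion} is needed is a useful observation not made explicit in the original.
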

\begin{proof}
Under the condition $X_n \in \mathcal C_1$, the probability that $f(X_n)$ equals $k$ is $p_{n-1,k}$. Indeed, removing the root chord from $X_n$ gives a \emph{uniform} connected diagram $C_1$ with $n-1$ chords such that $f(X_n) = f(C_1)$ (since $\lambda_1=0$). Similarly, the probability that $f(X_n)$ equals $k$ under the condition $X_n \in \mathcal C_i$, with $i = 2$ or $3$, is $p_{n-2,k-\lambda_i}$. This shows that
\begin{multline*}
p_{n,k}= \prob{X_n\,\in\,\mathcal C_1} \,p_{n-1,k} + \prob{X_n\,\in\,\mathcal C_2} \,p_{n-2,k-\lambda_2}  \\ + \prob{X_n\,\in\,\mathcal C_3} \,p_{n-2,k-\lambda_3} + \prob{X_n \notin \bigcup_{i=1,2,3} \mathcal C_i \textrm{ and }  f(X_n) = k } 
\end{multline*}
We can then see that $\prob{X_n\,\in\,\mathcal C_1} = (2\,n-3)\,\frac {c_{n-1}}{c_n}$ since the number of diagrams of size $n$ in $\mathcal C_1$ is equal to the number of size of connected diagram of size $n-1$ (namely $c_{n-1}$) times the number of ways of inserting the root chord in this diagram ($2n-3$ ways to do it). By Proposition~\ref{prop:expansion}, we deduce that $\prob{X_n\,\in\,\mathcal C_1} = 1 - n^{-1} + O\left(n^{-2}\right)$. Similarly,
$\prob{X_n\,\in\,\mathcal C_2} = \prob{X_n\,\in\,\mathcal C_3} = (2\,n-5)\,\frac {c_{n-2}}{c_n} = n^{-1}/2 + O\left(n^{-2}\right).$ Finally, we deduce
\[\prob{X_n \notin \bigcup_{i=1,2,3} \mathcal C_i \textrm{ and }  f(X_n) = k } \leq 1 - \sum_{i=1,2,3} \prob{X_n\,\in\,\mathcal C_i} = O\left(n^{-2}\right),\]
which proves the lemma.
\end{proof}

Lemma~\ref{lem:asympteq} suggests that the recursive equation relating the numbers $p_{n,k}$ is easy to study (mainly because it  \textit{almost} involves polynomial coefficients), but the presence of the error term $O(n^{-2})$ makes the analysis tricky. The idea  then consists in forgetting this term and studying the sequences $\left(q_{n,k}\right)$ defined by
\begin{equation}
q_{n,k} = \left(1 - n^{-1} \right) \,q_{n-1,k} + \frac{n^{-1}}2 \,\left(q_{n-2,k-\lambda_2} + q_{n-2,k-\lambda_3}\right).
\label{eq:tildepnk}
\end{equation}
After that, we find a relation between the sequences  $\left(q_{n,k}\right)$ and the original sequence $(p_{n,k})$, which terminates the proof.

Remark that if $\left(q_{n_0,k}\right)_k$ and $\left(q_{n_0+1,k}\right)_k$ define two probability distributions (i.e. $\sum_k q_{n_0,k} = \sum_k q_{n_0+1,k} = 1$, and  $q_{n_0,k}$ and $q_{n_0+1,k}$ are non-negative for every $k$), then by a simple induction, $\left(q_{n,k}\right)_k$ also defines a  probability distribution  for \textit{all} integers $n \geq n_0$. In this case, we can define for every $n \geq n_0$ a random variable $Y_n$ such that $\prob{Y_n=k} = q_{n,k}$. The following lemma states that $Y_n$ tends to a Gaussian law.

\begin{lem} Set $n_0 \geq 0$. Let us consider $\left(q_{n,k}\right)_{n\geq n_0,k \geq 0}$ a sequence of numbers that:
\begin{itemize}
\item defines a probability distribution,
\item satisfies \eqref{eq:tildepnk} after $n \geq n_0+2$,
\item has a finite support when $n=n_0$ and $n=n_0+1$ (that is, the number of $k$ such that $q_{n,k} \neq 0$ is finite).
\end{itemize}   
%(we impose no condition until $n_0$ -- except the fact it must define a probability distribution)
 If  $Y_n$ denotes the random variable defined as $\prob{Y_n=k} = q_{n,k}$, then $\dfrac{Y_n - \mu\,\ln(n)}{\sigma \sqrt{\ln(n)}}$ converges in distribution to a standard Gaussian law, where
$\mu =  \lambda_2/2 + \lambda_3/2$ and $\sigma^2 = \lambda_2^2/2 + \lambda_3^2/2$.
\label{l:Yn}
\end{lem}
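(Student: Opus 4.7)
The plan is to apply the Quasi-Powers Theorem (Theorem~\ref{theo:qp}) to a bivariate generating function built from the $q_{n,k}$. Introduce the probability generating function $P_n(u) = \sum_k q_{n,k}\,u^k$ of $Y_n$ and the mixed generating function $Q(z,u) = \sum_{n\ge n_0} P_n(u)\,z^n$. Multiplying the recurrence~\eqref{eq:tildepnk} by $n$ and summing against $u^k$ yields
\begin{equation*}
n\,P_n(u) = (n-1)\,P_{n-1}(u) + \beta(u)\,P_{n-2}(u), \qquad \beta(u) := \frac{u^{\lambda_2}+u^{\lambda_3}}{2}.
\end{equation*}
The three terms translate respectively to $z\,\partial_z Q$, $z^2\,\partial_z Q$ and $z^2 Q$, up to polynomial corrections coming from the initial conditions at $n=n_0$ and $n_0+1$. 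The recurrence is thus equivalent to the first-order linear ODE
\begin{equation*}
\partial_z Q(z,u) - \frac{z\,\beta(u)}{1-z}\,Q(z,u) = \frac{T(z,u)}{z(1-z)},
\end{equation*}
where $T(z,u)$ is a polynomial (in $z$, with coefficients polynomial in $u$) encoding the initial data; the integrating factor is $\mu(z,u) = \exp(\beta(u)(z+\ln(1-z))) = e^{\beta(u)z}(1-z)^{\beta(u)}$.

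Solving the ODE and splitting the antiderivative as $\int_0^z = \int_0^1 - \int_z^1$, I would show that the $\int_z^1$ piece, once multiplied by the prefactor $(1-z)^{-\beta(u)}$, becomes analytic at $z=1$: the substitution $s = 1-t$ turns it into $\int_0^{1-z} s^{\beta(u)-1} h(s,u)\,ds$ with $h$ analytic near $0$, and term-by-term integration produces a convergent series in $(1-z)^{\beta(u)+k}$, so cancellation of the $(1-z)^{-\beta(u)}$ factor leaves an analytic function. One thus obtains the decomposition
\begin{equation*}
Q(z,u) = A(z,u) + B(z,u)\,(1-z)^{-\beta(u)},
\end{equation*}
with $A$ and $B$ analytic on a domain $\{|z|\le r,\ |u-1|<\varepsilon\}$ for some $r>1$ and $\varepsilon>0$. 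The probability normalisation $P_n(1)=1$ forces $Q(z,1) = z^{n_0}/(1-z)$, and identifying the $(1-z)^{-1}$ singular part of both sides (using $\beta(1)=1$) gives $B(1,1) = 1 \neq 0$.

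All the hypotheses of the Quasi-Powers Theorem are now in place with $\alpha(u) = \beta(u)$: analyticity and $B(1,1)\neq 0$ have been checked, and $\alpha(1)=1$ is not a non-positive integer. For the variability condition, a direct computation gives $\alpha'(1) = (\lambda_2+\lambda_3)/2 = \mu$ and
\begin{equation*}
\alpha'(1)+\alpha''(1) = \frac{\lambda_2+\lambda_3}{2} + \frac{\lambda_2(\lambda_2-1)+\lambda_3(\lambda_3-1)}{2} = \frac{\lambda_2^2+\lambda_3^2}{2} = \sigma^2,
\end{equation*}
which is strictly positive because $\lambda_1 = 0$ and the $\lambda_i$'s are not all equal, so at least one of $\lambda_2,\lambda_3$ is non-zero. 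Theorem~\ref{theo:qp} then yields the Gaussian limit with mean $\mu\ln n$ and variance $\sigma^2\ln n$. The main obstacle I foresee is the analytic bookkeeping required to extract the clean decomposition $Q = A + B(1-z)^{-\beta(u)}$ from the integral formula—in particular verifying that the $\int_z^1$ piece is analytic at $z=1$ and that $B(1,1)\neq 0$—but the normalisation $Q(z,1) = z^{n_0}/(1-z)$ resolves this cleanly.
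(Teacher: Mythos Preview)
Your proposal is correct and follows essentially the same strategy as the paper: translate the recurrence into a first-order linear ODE for the bivariate generating function, solve it, extract a representation $A(z,u)+B(z,u)(1-z)^{-\alpha(u)}$, and invoke the Quasi-Powers Theorem with $\alpha(u)=(u^{\lambda_2}+u^{\lambda_3})/2$. The only noteworthy difference is organisational: the paper first \emph{extends} the sequence backward to all $n\geq 0$ (so that the ODE has the single source term $q_1(u)$, at the price of possibly negative coefficients and a separate analyticity check for the completed $q_n(u)$), whereas you work directly with $\sum_{n\ge n_0}$ and absorb the two initial rows into a polynomial $T(z,u)$; your route is a bit more direct, and your verification that $B(1,1)=1$ via $Q(z,1)=z^{n_0}/(1-z)$ is in fact cleaner than what the paper writes. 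One small point worth tightening: since $T(z,u)/z$ is a genuine polynomial, your auxiliary $h(s,u)=e^{\beta(u)(1-s)}\,T(1-s,u)/(1-s)\cdot(1-s)\ldots$ reduces to an \emph{entire} function of $s$, which is what guarantees that $A$ is analytic on a full disk $|z|\le r$ with $r>1$ (not merely near $z=1$) as the Quasi-Powers hypotheses require.
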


The subtlety of this lemma lies in the fact that we consider sequences  $\left(q_{n,k}\right)$ that are only defined after some fixed number $n_0$, without any initial condition. This flexibility on $n_0$ will be crucial for the final proof.

\begin{proof} We show here that the generating function of the numbers $q_{n,k}$ satisfies the hypotheses of the Quasi-Powers theorem (Theorem~\ref{theo:qp}).

\textbf{Step 1: completing the sequence.}
We first complete the sequence $\left(  q_{n,k}\right)$ so that it satisfies  \eqref{eq:tildepnk}
for \textit{every} $n \geq 0$. To do so, we define $q_{n,k}$ for $n \in \left\{0,\dots,n_0-1 \right\}$ by considering \eqref{eq:tildepnk} as a backward recurrence:
 \[ q_{n-2,k-\lambda_3} =  2n\,q_{n,k} + 2\,\left( 1 - n \right) \,q_{n-1,k} - q_{n-2,k-\lambda_2},\]
 with initial condition $q_{n,0} = 0$ for every $n \in \left\{0,\dots,n_0-1 \right\}$. (We have assumed that $\lambda_3$ is smaller than $\lambda_2$. If it is not the case, we can still swap the roles of $\lambda_2$ and $\lambda_3$.)
Some subtleties appear here.
First, the sequence $\left(  q_{n,k}\right)_{n \geq 0}$ thus completed can now take negative values. This will force us to go back to the non-completed probability sequence $\left(  q_{n,k}\right)_{n \geq n_0}$ to use the Quasi-Powers theorem. Secondly, 
if $\lambda_2 \neq \lambda_3$, the support is not necessarily finite any more.  It will add some difficulty to prove the analyticity in $u$ required by the Quasi-Powers theorem, which justifies the next item.

\textbf{Step 2: proving the analyticity of the coefficients.}
We show here that $q_n(u)$, defined as $\sum_{k \geq 0}q_{n,k}u^k$, is analytic at $u=1$ for every $n \leq n_0+1$.  It holds 
for $n = n_0$ and $n= n_0+1$ because by assumption, $q_n(u)$ is a polynomial (the support of $\left(q_{n,k}\right)_k$ is finite).  For the  numbers smaller than $n_0$, we show by induction on $\ell \in \{0,\dots,n_0\}$ that the quantity
$r_\ell(u)$, defined as $(u^{\lambda_2} + u^{\lambda_3})^\ell q_{n_0-\ell}(u)$, is a polynomial. Indeed, we observe that \eqref{eq:tildepnk} can be written as
\[\left(u^{\lambda_2}+u^{\lambda_3}\right)q_{n-2}(u) = 2n\,q_n(u) + 2\,(1-n)\,q_{n-1}(u),\]
which implies for every $\ell\geq 0$ by multiplying both sides by $\left(u^{\lambda_2}+u^{\lambda_3}\right)^{\ell-1}$:
\[ r_\ell(u) =  2\,(n_0-\ell+2)\,\left(u^{\lambda_2}+u^{\lambda_3}\right) r_{\ell-2}(u) - 2\,(n_0-\ell +1)\, r_{\ell-1}(u).\]
The last equality shows that the induction hypothesis is preserved, so by induction (the base case $\ell = 0$ and $\ell = 1$ are obvious), the series $r_{\ell}(u)$ is polynomial for  every $\ell \in \{0,\dots,n_0\}$. In particular, it means that $q_n(u)$ is analytic at $1$ for every $0 \leq n \leq n_0$.

\textbf{Step 3: solving the differential equation.}
Let us consider $Q(z,u) := \sum_{n,k\geq 0} q_{n,k} z^n\,u^k$ the \textit{completed}  generating function of the numbers $q_{n,k}$. In the same spirit as the proof of Proposition~\ref{prop:GFrec}, Equation \eqref{eq:tildepnk} can be translated in terms of a differential equation on $Q(z,u)$:
\[(1-z)\,\pd Q z(z,u) - \frac z 2 \,\left(u^{\lambda_2}+u^{\lambda_3} \right) \,Q(z,u) = q_1(u),\] 
where $q_1(u)$ is the coefficient of $z^1$ in $Q(z,u)$. This equation has for solution
\begin{equation}
Q(z,u) = q_0(u) e^{-\alpha(u)z}\,(1-z)^{-\alpha(u)} + q_1(u) e^{-\alpha(u)z}\,(1-z)^{-\alpha(u)} \, \int_0^z e^{\alpha(u)x}\,(1-x)^{\alpha(u)-1} dx,  
\label{eq:solu}
\end{equation}
where $\alpha(u) := \left(u^{\lambda_2}+u^{\lambda_3} \right)/2$ and $q_0(u)$ is the constant coefficient in $z$ of $Q(z,u)$.

\textbf{Step 4: finding a representation of $\boldsymbol{Q(z,u)}$.}
Let us prove that $Q(z,u)$ has a representation of the form
\[Q(z,u) = A(z,u) + B(z,u)\,(1-z)^{-\alpha(u)}.\] For that, we observe by a simple calculation that an antiderivative for $e^{\alpha(u)z}\,(1-z)^{\alpha(u)-1}$ is given by
\[- e^{\alpha(u)} \, \sum_{k \geq 0}  \frac {(1-z)^{\alpha(u)+k}} {(\alpha(u)+k)k!} (-\alpha(u))^k.\]
Thus, if $H(z,u)$ denotes the series $\sum_{k \geq 0} z^k/((u+k)k!)$ (which is analytic for every $z$ and for $u=1$), then \eqref{eq:solu} can be put into the form
\begin{multline*}Q(z,u) = -q_1(u)\,e^{\alpha(u)(1-z)}\,H(\alpha(u)z-\alpha(u),\alpha(u)) \\ +\left(q_0(u)+ q_1(u)\,e^{\alpha(u)}\,H(-\alpha(u),\alpha(u)) \right) e^{-\alpha(u)z} (1-z)^{\alpha(u)}, \end{multline*}
which is exactly the wanted representation. 

\textbf{Step 5: application of the Quasi-Powers Theorem.} As announced in Step 1, we use the Quasi-Powers Theorem (Theorem~\ref{theo:qp}) not on $Q(z,u)$ (since it could have negative coefficients), but on the non-completed probability generating function $\sum_{n \geq n_0} q_{n,k} z^n u^k$. The latter function has a representation of the form
$A(z,u) + B(z,u)\,(1-z)^{-\alpha(u)}$
since it differs from $Q(z,u)$ by an analytic function which is here $\sum_{n = 0}^{n_0-1}\,q_n(u) z^n$. Moreover, it is analytic at $(0,0)$ and it has non-negative coefficients. The variability condition is also satisfied since $\alpha'(1)+\alpha''(1) = \lambda_2^2/2 + \lambda_3^2/2$. The Quasi-Power Theorem thus proves that $Y_n$ converges to a Gaussian limit law with the announced properties.
\end{proof}

The next lemma, which is quite technical, shows how the error in $O(n^{-2})$ from~\eqref{eq:pnk} is propagated over the differences $p_{n,k} - q_{n,k}$, when $n$ goes to $+\infty$.
 
\begin{lem} There exists a family of constants $M_{n,k,\ell}$ with $n_0+2 \leq \ell \leq n$ and a constant $M$ such that  
\begin{itemize}
\item for every $n \geq \ell \geq n_0 + 2$,
\[\sum_{k \geq 0} M_{n,k,\ell} \leq M;\]
\item for every $n \geq n_0$ and $k$ positive,
\[|p_{n,k} - q_{n,k}| \leq \max_{k \geq 0}\, \left|p_{n_0,k} - q_{n_0,k}\right| + \max_{k \geq 0} \, \left|p_{n_0+1,k} - q_{n_0+1,k}\right| +   \sum_{\ell=n_0+2}^n \frac {M_{n,k,\ell}} {\ell^2}; \]
\end{itemize}
where $p_{n,k}$ is defined by Lemma~\ref{lem:asympteq} and $q_{n,k}$ can be any sequence defined by Lemma~\ref{l:Yn}. (In other words, $M$ and the constants $M_{n,k,\ell}$ do not depend on the sequence $q_{n,k}$.)
\label{l:diff}
\end{lem}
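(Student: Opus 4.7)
The plan is to iterate the two recurrences down to the boundary levels $n_0$ and $n_0+1$, and interpret the iteration via a backward Markov chain. Set $d_{n,k} := p_{n,k}-q_{n,k}$. Subtracting \eqref{eq:tildepnk} from \eqref{eq:pnk} gives
\[
d_{n,k} = (1-n^{-1})\,d_{n-1,k} + \tfrac{n^{-1}}{2}\bigl(d_{n-2,k-\lambda_2}+d_{n-2,k-\lambda_3}\bigr) + \epsilon_{n,k},
\]
where $\epsilon_{n,k}$ measures the failure of $p_{n,k}$ to satisfy the \emph{exact} version of \eqref{eq:tildepnk}; crucially it depends only on $p$. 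Going back to the proof of Lemma~\ref{lem:asympteq}, $\epsilon_{n,k}$ decomposes into contributions of the form $(\mathbb P(X_n\in\mathcal C_i)-\pi_i^{\mathrm{asymp}})\,p_{n-i,k-\lambda_i}$, each controlled by $O(n^{-2})$ via Proposition~\ref{prop:expansion}, plus the residual $\mathbb P(X_n\notin\bigcup_i\mathcal C_i,\,f(X_n)=k)$. Summing over $k$ and using $\sum_k p_{\cdot,k}=1$ together with $\mathbb P(X_n\notin\bigcup_i\mathcal C_i)=O(n^{-2})$ yields a universal constant $C$ with $\sum_k|\epsilon_{n,k}|\leq C/n^2$.

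Now I iterate: every occurrence of $d_{m,k'}$ with $m\geq n_0+2$ is replaced by its right-hand-side expansion, continuing until all remaining $d$-terms sit at levels $n_0$ or $n_0+1$. This produces the identity
\[
d_{n,k} = \sum_{k'} W_{n,k,n_0,k'}\,d_{n_0,k'} + \sum_{k'} W_{n,k,n_0+1,k'}\,d_{n_0+1,k'} + \sum_{\ell=n_0+2}^{n}\sum_{k'} W_{n,k,\ell,k'}\,\epsilon_{\ell,k'},
\]
with non-negative weights $W_{n,k,\ell,k'}$. These weights admit a clean probabilistic reading in terms of the backward Markov chain on pairs $(m,j)$ which from $(m,j)$ moves to $(m-1,j)$ with probability $1-m^{-1}$ and to each of $(m-2,j-\lambda_2),\,(m-2,j-\lambda_3)$ with probability $m^{-1}/2$: for $n_0+2\leq \ell\leq n$, $W_{n,k,\ell,k'}$ is the probability, started from $(n,k)$, of visiting $(\ell,k')$, while for $\ell\in\{n_0,n_0+1\}$ it is the probability of first exiting to level $\ell$ at $(\ell,k')$. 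In every case $\sum_{k'}W_{n,k,\ell,k'}\leq 1$.

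Setting $M_{n,k,\ell} := \ell^{2}\sum_{k'} W_{n,k,\ell,k'}\,|\epsilon_{\ell,k'}|$, the triangle inequality applied to the displayed identity gives the claimed bound on $|p_{n,k}-q_{n,k}|$, using $\sum_{k'} W_{n,k,\ell,k'}\leq 1$ at the boundary levels. The one substantive point left is the uniform bound $\sum_k M_{n,k,\ell}\leq M$. For this I exploit the translation invariance of the chain in the $k$-coordinate, $W_{n,k,\ell,k'}=W_{n,0,\ell,k'-k}$, and swap the two sums:
\[
\sum_{k\geq 0} M_{n,k,\ell} = \ell^{2}\sum_{k'}|\epsilon_{\ell,k'}|\sum_{k\geq 0} W_{n,0,\ell,k'-k} \leq \ell^{2}\sum_{k'}|\epsilon_{\ell,k'}| \leq C,
\]
because the inner sum is a partial sum of a sub-probability distribution. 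Taking $M := C$ closes the argument. The one genuine subtlety is that the summed-over-$k$ estimate $\sum_k|\epsilon_{n,k}|=O(n^{-2})$ is essential: a purely pointwise $O(n^{-2})$ bound would not survive because the support of $\epsilon_{n,\cdot}$ grows linearly in $n$.
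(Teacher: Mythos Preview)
Your proof is correct and, at its core, carries out the same iteration as the paper: both arguments subtract the two recurrences, propagate the difference $d_{n,k}=p_{n,k}-q_{n,k}$ down to the boundary levels $n_0,n_0+1$ while accumulating the $O(n^{-2})$ errors along the way, and then check that the accumulated error at each level $\ell$ has bounded total mass in $k$. The paper packages this as an explicit induction on $n$, defining the $M_{n,k,\ell}$ by the same three-term recursion as the $q$'s and verifying $\sum_k M_{n,k,\ell}\le M$ inductively; your version unfolds the recursion in one shot and reads the resulting weights as visit probabilities of the obvious backward Markov chain, then obtains the mass bound by translation invariance in $k$ plus $\sum_{k'}|\epsilon_{\ell,k'}|\le C/\ell^2$. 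These are two presentations of the same computation: the paper's recursively defined $M_{n,k,\ell}$ is exactly $M\sum_{k'}W_{n,k,\ell,k'}\,\mu_{\ell,k'}$ in your notation. Your write-up is arguably a bit more transparent on one point: you decompose $\epsilon_{n,k}$ into all four contributions (the three $(\mathbb P(X_n\in\mathcal C_i)-\pi_i)\,p_{\cdot,\cdot}$ pieces plus the residual), whereas the paper only names the residual term when identifying the error; your $\ell^1$ bound $\sum_k|\epsilon_{n,k}|=O(n^{-2})$ is the right formulation and matches what the paper actually uses.
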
 
\begin{proof} The lemma is proved by an induction on $n$. The statement is obvious for $n = n_0$ and $n = n_0 +1$.

For $n > n_0 + 1$, the combination of \eqref{eq:pnk} and \eqref{eq:tildepnk} leads to the inequality
\begin{multline*} \left|p_{n,k} - q_{n,k}\right| \leq \left(1-n^{-1}\right)\left|p_{n-1,k} - q_{n-1,k}\right| +  \frac{n^{-1}} 2 \,\left|p_{n-2,k-\lambda_2} - q_{n-2,k-\lambda_2}\right| \\  + \frac{n^{-1}} 2 \,\left|p_{n-2,k-\lambda_3} - q_{n-2,k-\lambda_3}\right| + O(n^{-2}).
\end{multline*}
Referring to the proof of Lemma~\ref{lem:asympteq}, we see that the error $O(n^{-2})$ in \eqref{eq:pnk} corresponds to the probability
$\prob{X_n \notin \bigcup_{i=1,2,3} \mathcal C_i \textrm{ and }  f(X_n) = k}$. This number is also $\mu_{n,k} \times \prob{X_n \notin \bigcup_{i=1,2,3} C_i}$, where $\mu_{n,k}$ is the conditional probability
\[\mu_{n,k} := \prob{f(X_n) = k \left| X_n \notin  \bigcup_{i=1,2,3} C_i \right.}.\]
(We have $\sum_{k \geq 0} \mu_{n,k} = 1$.) We have already stated that $\prob{X_n \notin \bigcup_{i=1,2,3} C_i} = O(n^{-2})$, so there exists a constant $M$ such that $\prob{X_n \notin \bigcup_{i=1,2,3} C_i}$ is smaller than $M\,n^{-2}$ for every $n \geq 0$. Using that fact and the induction hypothesis, the previous inequality becomes
\begin{multline*} \left|p_{n,k} - q_{n,k}\right| \leq \left(1-n^{-1}\right)\left(x + \sum_{\ell=n_0+2}^{n-1} \frac {M_{n-1,k,\ell}} {\ell^2}\right) +  \frac{n^{-1}} 2 \,\left(x + \sum_{\ell=n_0+2}^{n-2} \frac {M_{n-2,k-\lambda_2,\ell}} {\ell^2}\right) \\   + \frac{n^{-1}} 2 \,\left(x + \sum_{\ell=n_0+2}^{n-2} \frac {M_{n-2,k-\lambda_3,\ell}} {\ell^2}\right) + M\,\mu_{n,k}\,n^{-2},
\end{multline*}
where $x := \max_{k \geq 0}\, \left|p_{n_0,k} - q_{n_0,k}\right| + \max_{k \geq 0} \, \left|p_{n_0+1,k} - q_{n_0+1,k}\right|$.
Reorganising the terms, we find that
\[\left|p_{n,k} - q_{n,k}\right| \leq x + \sum_{\ell = n_0+2}^n \,\frac{M_{n,k,\ell}}{\ell^{2}},
\]
where we have set
\[M_{n,k,n} = M \mu_{n,k}, \quad \quad M_{n,k,n-1} = (1-n^{-1}) \, M_{n-1,k,n-1},\]
and for $n_0 + 2 \leq \ell \leq n-2$,
\[M_{n,k,\ell} = (1-n^{-1}) \, M_{n-1,k,\ell} + \frac{n^{-1}} 2 \, M_{n-2,k-\lambda_2,\ell} + \frac{n^{-1}} 2\, M_{n-2,k-\lambda_3,\ell}.\]
We have $\sum_{k \geq 0} M_{n,k,n} = M$ since $\sum_{k \geq 0} \mu_{n,k} = 1$. As for $n_0 + 2 \leq \ell \leq n-1$, the induction hypothesis shows that 
\[\sum_{k \geq 0} M_{n,k,\ell} \leq (1-n^{-1}) \, M + \frac{n^{-1}} 2 \, M + \frac{n^{-1}} 2\, M = M.\]
(The change of variable $k \leftarrow k-\lambda_i$ implies $\sum_{k \geq 0} M_{n-2,k-\lambda_i,\ell} = \sum_{k \geq 0} M_{n-2,k,\ell} \leq M$.) The induction is thus proved.
\end{proof}

%But $\left(1-n^{-1}\right)\,(n-1)^{-1/2} = \sqrt{1-n^{-1}}\,{n}^{-1/2} \leq (1-\frac{n^{-1}} 2)\,{n}^{-1/2}$, hence
%\[p_{n,k} \leq D\,{n}^{-1/2} - D\,{n}^{-1/2}\,\left(\frac{n^{-1}} 2 - n^{-1}\,(n-2)^{-1/2} n^{-1}\,(n-2)^{-1/2} \right).\]

% 
%\begin{lem} Let $f$ and $X_n$ be as defined in Theorem~\ref{theo:bigone} and $Y_n$ any sequence as defined in Lemma~\ref{l:Yn}. 
%For every $\mu >0, \sigma>0, t_1<t_2$ and $n \geq n_0$ we have
%\begin{multline*}\left|\prob{ t_1 \leq \dfrac{f(X_n) - \mu\,\ln(n)}{\sigma \sqrt{\ln(n)}}\leq t_2} - \prob{t_1 \leq\dfrac{Y_n - \mu\,\ln(n)}{\sigma \sqrt{\ln(n)}}\leq t_2}\right| \\
%\leq \left|\prob{t_1 \leq\dfrac{f(X_{n_0}) - \mu\,\ln({n_0})}{\sigma \sqrt{\ln({n_0})}}\leq t_2} - \prob{t_1 \leq\dfrac{Y_{n_0} - \mu\,\ln({n_0})}{\sigma \sqrt{\ln({n_0})}}\leq t_2}\right| + \frac 1 {\sqrt{n-n_0}}  
%\end{multline*}
%\end{lem}

We now have all the tools we need to show Theorem~\ref{theo:bigone}.

\begin{proof}[Proof of Theorem~\ref{theo:bigone}.]
 We want to prove that $\dfrac{f(X_n) - \mu\,\ln(n)}{\sigma \sqrt{\ln(n)}}$ converges in distribution to a standard Gaussian law, that is, for every $\varepsilon > 0$ and every real number $t$, there exists $n_1 \geq 0$ such that for every $n \geq n_1,$
\[ \left|\prob{ \dfrac{f(X_n) - \mu\,\ln(n)}{\sigma \sqrt{\ln(n)}}\leq t} -  F_{\mathcal N}(t) \right| \leq \varepsilon,\]
 where $F_{\mathcal N}(t)$ denotes the cumulative distribution function of the standard Gaussian law.
%
%Set $\alpha_n = \prob{ t_1 \leq \dfrac{f(X_n) - \mu\,\ln(n)}{\sigma \sqrt{\ln(n)}}  \leq t_2}$ and $\beta_n = \prob{t_1 \leq\dfrac{Y_n - \mu\,\ln(n)}{\sigma \sqrt{\ln(n)}}  \leq t_2}  $. Combining the fact that 
%\[\alpha_n = \sum_{\mu\ln(n) + t_1 \sigma \sqrt{\ln(n)} \leq k  \leq \mu\ln(n) + t_2 \sigma \sqrt{\ln(n)}}p_{n,k}\]
%and Lemma \ref{lem:asympteq}, we observe that
%\[\alpha_n = \left(1- n^{-1}\right) \, \alpha_{n-1} + 2\, n^{-1} \, %\alpha_{n-2} + O\left(\sqrt{\ln(n)}\,n^{-2}\right).\]

\textbf{1. Definition of $\boldsymbol{n_0}$.} The series $\sum_{\ell \geq 1} \ell^{-2}$ is convergent, hence its remainder tends to $0$. So there exists a number $n_0$ such that for every $n \geq n_0+2$, 
\begin{equation}
\sum_{\ell = n_0+2}^n \frac {M} {\ell^2} \leq \sum_{\ell = n_0+2}^{+\infty}  \frac {M} {\ell^2} \leq \frac \varepsilon 2, \label{eq:firstcut}
\end{equation}
where $M$ is the constant defined by Lemma~\ref{l:diff}. 

\textbf{2. Definition of an adapted sequence $\boldsymbol{(q_{n,k})}$.}
Let us define a sequence $\left(q_{n,k}\right)$ satisfying \eqref{eq:tildepnk} with initial conditions $q_{n_0,k} := p_{n_0,k}$ and $q_{n_0+1,k} := p_{n_0+1,k}$ for every $k \geq 0$. (The sequence $\left(q_{n,k}\right)$ satisfies Lemma~\ref{l:Yn}. The finiteness of the support comes from the fact there cannot be more integers $k$ such that $p_{n,k}\neq 0$ than the number of connected diagrams of size $n$.)

\textbf{3. Definition of $\boldsymbol{n_1}$ and first piece of the inequality.} We know by Lemma \ref{l:Yn} that the variable $\dfrac{Y_n - \mu\,\ln(n)}{\sigma \sqrt{\ln(n)}}$ converges in distribution to the standard Gaussian law. So there exists $n_1 \geq n_0 + 2$ such that for every $n \geq n_1$,
\begin{equation}
 \left|\prob{ \dfrac{Y_n - \mu\,\ln(n)}{\sigma \sqrt{\ln(n)}}\leq t} -   F_{\mathcal N}(t) \right| \leq \frac \varepsilon 2.
 \label{eq:scus}
 \end{equation}

\textbf{4. Second piece of the inequality.}
By definition, we have for every $n \geq n_1$,
\begin{multline*}
\left|\prob{ \dfrac{f(X_n) - \mu\,\ln(n)}{\sigma \sqrt{\ln(n)}}\leq t} - \prob{\dfrac{Y_n - \mu\,\ln(n)}{\sigma \sqrt{\ln(n)}}\leq t}\right| \\
= \left|\sum_{0 \leq  \, k \,  \leq \mu\,\ln(n) +  t \sigma \sqrt{\ln(n)}  } (p_{n,k} - q_{n,k})\right| \leq  \sum_{ k \geq 0  } \left|p_{n,k} - q_{n,k}\right|.
\end{multline*}
Lemma~\ref{l:diff} yields an upper bound for the difference $p_{n,k} - q_{n,k}$, so the previous number is bounded by
\[ \sum_{k \geq 0} \left(\max_{k \geq 0}\, \left|p_{n_0,k} - q_{n_0,k}\right| + \max_{k \geq 0} \, \left|p_{n_0+1,k} - q_{n_0+1,k}\right| +   \sum_{\ell=n_0+2}^n \frac {M_{n,k,\ell}} {\ell^2}  \right).\]
However, by definition of $(q_{n,k})$, $\max_{k \geq 0}\, \left|p_{n_0,k} - q_{n_0,k}\right| = \max_{k \geq 0} \, \left|p_{n_0+1,k} - q_{n_0+1,k}\right| = 0.$ We can then swap the sum over $k$ and the sum over $\ell$, and use the condition $\sum_{k \geq 0} M_{n,k,\ell} \leq M$ from Lemma~\eqref{l:diff} to obtain
\begin{equation}
\left|\prob{ \dfrac{f(X_n) - \mu\,\ln(n)}{\sigma \sqrt{\ln(n)}}\leq t} - \prob{\dfrac{Y_n - \mu\,\ln(n)}{\sigma \sqrt{\ln(n)}}\leq t}\right|
% \leq \sum_{\ell = n_0+2}^n \frac {\sum_{k \geq 0} M_{n,k,\ell}} {\ell^2}
 \leq \sum_{\ell = n_0+2}^n \frac M {\ell^2} \leq \frac \varepsilon 2,
 \label{eq:scur}
\end{equation}
where the last inequality comes from \eqref{eq:firstcut}.

\textbf{5. Conclusion.} The conjunction of \eqref{eq:scus} and \eqref{eq:scur} shows via a triangle inequality that
\[\left|\prob{ \dfrac{f(X_n) - \mu\,\ln(n)}{\sigma \sqrt{\ln(n)}}\leq t} -  F_{\mathcal N}(t) \right| \leq \frac \varepsilon 2 + \frac \varepsilon 2 = \varepsilon\]
for every $n \geq n_1$, as we had to prove.
\end{proof}

\subsection{Position of the first terminal chord}

In this subsection, we are interested by the average position of the first terminal chord for the intersection order. This parameter is relevant since it appears in the sum \eqref{sol} characterizing the Green function solution of \eqref{DSE}.

As an introductory remark, note that the first terminal chord is \textit{always} the chord with the rightmost endpoint, as stated by the following proposition.

\begin{prop} For every connected diagram, the first terminal chord is the chord that contains the last point of the diagram.
\end{prop}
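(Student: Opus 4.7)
The plan is to induct on $n=|C|$. Write $M=2n$ for the last point, $e^{*}=\{a,M\}$ for the chord containing $M$, and $r=\{1,b\}$ for the root; observe that $e^{*}$ is terminal because no chord $\{c,d\}$ can satisfy $a<c<M<d$. The base case $n=1$ is trivial: the unique chord equals both $r$ and $e^{*}$. For $n\geq 2$, $r$ is automatically not terminal: it has no outgoing edges (being terminal would mean precisely this), and it has no incoming edges either (an incoming edge to $r=\{1,b\}$ would come from a chord $\{c,d\}$ with $c<1<d<b$, impossible since $c\geq 1$), so a terminal $r$ would be isolated in the intersection graph, contradicting connectedness.

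Since $r$ is not terminal, the intersection order visits $r$ and then the chords of $C_{1},\dots,C_{s}$ in that order, so the first terminal chord of $C$ lies in some $C_{j}$. Every $C_{j}$ carries a terminal chord — for instance the chord through its largest endpoint (terminal in $C_{j}$, and hence in $C$, since any chord of $C$ surrounding it would cross it and therefore lie in $C_{j}$, impossible by maximality). Consequently the first terminal chord of $C$ coincides with the first terminal chord of $C_{1}$, which by the induction hypothesis is the chord of $C_{1}$ containing the largest endpoint of $C_{1}$. The whole problem therefore reduces to the sub-claim \emph{$M$ is an endpoint in $C_{1}$}, equivalently $e^{*}\in C_{1}$.

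Proving this sub-claim is the main obstacle; I argue by contradiction. Suppose $e^{*}\in C_{j}$ with $j\neq 1$, and let $m_{1}$ be the largest endpoint in $C_{1}$, so $m_{1}<M$. Connectedness of $C$ implies every component of $C\setminus r$ contains a chord crossing $r$ (otherwise it would be isolated in the intersection graph), and the right endpoint of such a chord in $C_{1}$ is $>b$, whence $m_{1}>b$. Consider now the cut $k=m_{1}$: by connectedness some chord of $C$ must span it (otherwise its chords would split into a ``left'' set with endpoints $\leq m_{1}$ and a ``right'' set with endpoints $>m_{1}$, with no interleavings between the two). The root cannot span $k=m_{1}$ (since $b<m_{1}$), so a non-root chord $(x,y)$ satisfies $x\leq m_{1}<y$. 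Maximality of $m_{1}$ rules out $(x,y)\in C_{1}$; and if any chord of $C_{1}$ crossed $(x,y)$, then $(x,y)$ would lie in $C_{1}$, a contradiction. Hence no chord of $C_{1}$ crosses $(x,y)$.

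The endgame is a structural case analysis. Let $(u_{1},m_{1})\in C_{1}$ be the chord carrying $m_{1}$ and let $(2,y_{2})\in C_{1}$ be the chord through $2$. Direct interleaving checks rule out $u_{1}<x$ and $y_{2}>x$ (each would force the corresponding chord to cross $(x,y)$), so
\[
y_{2}<x<u_{1}\leq m_{1}<y;
\]
hence $(2,y_{2})$ sits entirely in $[2,x-1]$ and $(u_{1},m_{1})$ strictly inside $(x,y)$. More generally, every chord of $C_{1}$, failing to cross $(x,y)$, falls into exactly one of four regions: entirely in $[2,x-1]$, strictly inside $(x,y)$, entirely in $[y+1,M]$, or nesting over $(x,y)$ with one endpoint in $[2,x-1]$ and one in $[y+1,M]$. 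A short check confirms that a chord strictly inside $(x,y)$ is either disjoint from or strictly contained in any chord in the other three regions, and so never crosses one. Therefore $(u_{1},m_{1})$ and $(2,y_{2})$ lie in different crossing-components of $C\setminus r$, contradicting their common membership in $C_{1}$. This closes the sub-claim and the induction.
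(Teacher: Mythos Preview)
Your proof is correct and follows the same inductive skeleton as the paper: remove the root, observe that the first terminal chord of $C$ lies in $C_{1}$, apply the induction hypothesis to $C_{1}$, and reduce everything to the sub-claim that $C_{1}$ contains the last point $M=2n$. The paper's own proof simply \emph{asserts} this sub-claim (``because it is the topmost component, $C_{1}$ must also contain the chord with the rightmost endpoint'') without justification; you actually prove it, via the spanning-chord argument and the region analysis. So your write-up is more complete than the paper's on the one nontrivial point.

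Two minor remarks. First, your four-region case split is more than you need: every endpoint of $C_{1}$ is at most $m_{1}<y$, so regions~3 and~4 (endpoints in $[y+1,M]$) are automatically empty for chords of $C_{1}$. The contradiction then follows immediately from the two-region split ``both endpoints $<x$'' versus ``both endpoints in $(x,m_{1}]$'', which already separates $(2,y_{2})$ from $(u_{1},m_{1})$. Second, the sentence ``Consequently the first terminal chord of $C$ coincides with the first terminal chord of $C_{1}$'' silently uses that a chord of $C_{1}$ is terminal in $C$ if and only if it is terminal in $C_{1}$; this is true (any witnessing chord would cross it and hence lie in $C_{1}$, as you argue for the special case of the chord through $m_{1}$, and the root cannot witness since its left endpoint is $1$), but it is worth saying once in full generality rather than only for the maximal chord.
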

\begin{proof}We proceed by induction on the number of chords. The property obviously holds when there is only one chord. Assuming now that there are several chords in the diagram, we remove the root chord from the diagram, which creates one or several connected components. By definition of the intersection order, the chords in the topmost component, which we denote $C_1$, are smaller than the other ones. But because it is the topmost component, $C_1$ must also contain the chord with the rightmost endpoint. Therefore, by using the induction hypothesis, the latter chord is the first terminal chord of $C_1$, hence the first terminal chord of the whole original diagram.
\end{proof}

Now let us turn on $f_n$, the random variable that returns the position of the first terminal chord, under the uniform distribution on connected chord diagrams of size $n$.

Note that $f_n$ does not satisfy the hypotheses of Theorem~\ref{theo:bigone}. Indeed, we can observe that the position of the first terminal chord for every diagram in $\mathcal C_2$ is $2$, regardless of the position of the first terminal chord of $C_2$.

This remark can be checked experimentally; the observed limit law is not Gaussian. In fact, it seems that $f_n/n$ converges to a law with a density, as shown by Figure \ref{fig:limitlaw}. We think that this density is $(1-s)^{-1/2}/2$, with $s \in [0,1)$. To our knowledge, such a limit law has never been observed on a class of combinatorial objects. This should be the subject of future work.  

\begin{figure}[ht!]
\begin{center}
\includegraphics[width=0.5\textwidth]{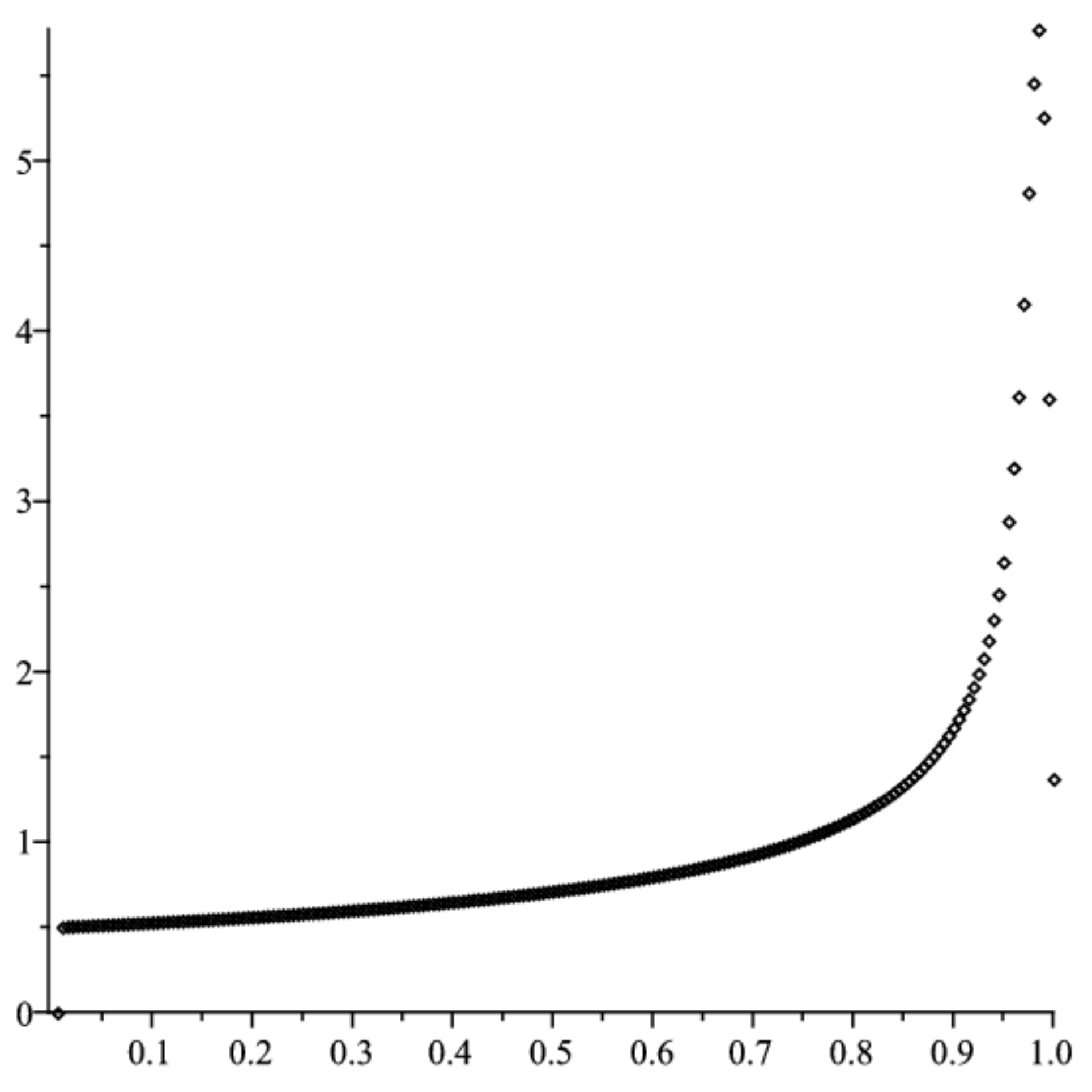}
\end{center}
\caption{Graph of $n \times \mathbb P(f_n/n=k)$ in terms of $k$, for $n=200$. If $f_n$ have a local limit law as we expect (cf \cite[p. 695]{flajolet-sedgewick} for a definition of \textit{local limit law}), then $n \times \mathbb P(x \leq f_n/n \leq x + dx)$ should converge to the density of the limit law of $f_n/n$ at the point $x$.}
\label{fig:limitlaw}
\end{figure}

We calculate here the expected value of this limit law.

\begin{theo} Let $f_n$ be the position of the first terminal chord of a uniformly distributed random connected chord diagram of size $n$. The expected value of $f_n$ is asymptotically equivalent to $ \dfrac{2 n} 3$.
\label{theo:evfn}
\end{theo}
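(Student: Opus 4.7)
The plan is to derive an asymptotic recurrence for $E_n := \mathbb{E}(f_n)$ by conditioning on the three events $\{X_n \in \mathcal{C}_i\}$ from Theorem~\ref{theo:bigone}, and then to extract the constant $2/3$ by a bootstrapping argument. By the preceding proposition, the first terminal chord is the one with the rightmost endpoint, and the intersection order always visits the topmost component first; combining these two facts pins down $f$ on each piece of the decomposition. If $C\in\mathcal{C}_1$ (with $|C|\geq 2$), then the root meets $C_1$ and is therefore not terminal, so $f(C)=f(C_1)+1$. If $C\in\mathcal{C}_2$, then the topmost piece left after removing the root is a single chord, which is automatically terminal, giving $f(C)=2$ independently of $C_2$. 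If $C\in\mathcal{C}_3$, then the topmost piece is $C_3$, giving $f(C)=f(C_3)+1$. In each case the residual diagram is uniform over connected diagrams of the appropriate size.

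Using the probabilities already established in the proof of Lemma~\ref{lem:asympteq}, namely $\mathbb{P}(X_n\in\mathcal{C}_1)=1-n^{-1}+O(n^{-2})$, $\mathbb{P}(X_n\in\mathcal{C}_2)=\mathbb{P}(X_n\in\mathcal{C}_3)=\tfrac{1}{2n}+O(n^{-2})$ and $\mathbb{P}(X_n\notin\bigcup\mathcal{C}_i)=O(n^{-2})$, together with the trivial bound $f_n\leq n$ which makes the pathological contribution $O(1/n)$, the law of total expectation gives
\begin{equation*}
E_n \;=\; \Bigl(1-\tfrac{1}{n}\Bigr)(E_{n-1}+1) \;+\; \tfrac{2}{2n} \;+\; \tfrac{1}{2n}(E_{n-2}+1) \;+\; O\!\Bigl(\tfrac{1}{n}\Bigr) \;=\; \Bigl(1-\tfrac{1}{n}\Bigr)E_{n-1} \;+\; \tfrac{E_{n-2}}{2n} \;+\; 1 \;+\; O\!\Bigl(\tfrac{1}{n}\Bigr).
\end{equation*}

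Setting $u_n := E_n - 2n/3$ and substituting into this recurrence, the polynomial parts assemble to exactly $\tfrac{2n}{3}$ thanks to the identity $\tfrac{2(n-1)^2}{3n}+\tfrac{n-2}{3n}+1=\tfrac{2n}{3}$, leaving
\begin{equation*}
u_n \;=\; \tfrac{n-1}{n}\,u_{n-1} \;+\; \tfrac{u_{n-2}}{2n} \;+\; O\!\Bigl(\tfrac{1}{n}\Bigr).
\end{equation*}
Since $\tfrac{n-1}{n}+\tfrac{1}{2n}=1-\tfrac{1}{2n}<1$, choosing $M$ larger than twice the implicit constant $C$ of the error term and larger than $|u_k|$ for the (finitely many) initial values yields, by immediate induction, $|u_n|\leq M(1-\tfrac{1}{2n})+\tfrac{C}{n}\leq M$ for all $n$. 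Hence $E_n=\tfrac{2n}{3}+O(1)$, which is in fact stronger than the claimed equivalence $E_n\sim\tfrac{2n}{3}$. The only non-routine point is ensuring that the $O(n^{-2})$ mass of diagrams outside $\bigcup\mathcal{C}_i$ does not blow up when weighted by the possibly large value $f(C)$; the trivial estimate $f_n\leq n$ absorbs it into the error, and indeed the whole argument uses nothing beyond the $\mathcal{C}_i$-decomposition and Proposition~\ref{prop:expansion}. The interest of the calculation is really the algebraic identity in the last paragraph, where the constant $2/3$ emerges precisely from the asymmetry that $\mathcal{C}_2$ forces $f$ to equal a constant while $\mathcal{C}_3$ does not.
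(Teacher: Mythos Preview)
Your argument is correct and in fact yields the sharper estimate $\mathbb E(f_n)=\tfrac{2n}{3}+O(1)$, whereas the paper only obtains $\mathbb E(f_n)=\tfrac{2n}{3}+o(n)$. The route, however, is genuinely different. The paper introduces an auxiliary exact recurrence $g_{n,k}$ (your equation~\eqref{eq:propgn}), proves via a limit-point argument that $\sum_k k\,g_{n,k}$ and $\mathbb E(f_n)$ differ by $o(n)$, and then computes the former by finding the bivariate generating function $G(z,u)$ explicitly (Lemma~\ref{lem:Gzu}) and applying singularity analysis to $\partial_u G(z,1)$. Your approach bypasses the generating function entirely: you write the recurrence for $E_n$ directly, guess the linear term $2n/3$, check the algebraic identity $\tfrac{2(n-1)^2}{3n}+\tfrac{n-2}{3n}+1=\tfrac{2n}{3}$, and close with a one-line contraction estimate using $\tfrac{n-1}{n}+\tfrac{1}{2n}=1-\tfrac{1}{2n}$. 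This is more elementary and gives a stronger conclusion; the paper's detour through $G(z,u)$ pays off only if one wants finer information (higher moments, the limit law of $f_n/n$) rather than just the mean.
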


Once again, the proof is based on the approximation of $f_n$ by another law which is easier to study. If we denote by $b(C)$ the position of the first terminal chord of a connected diagram $C$, and by $\mathcal C_1$, $\mathcal C_2$, $\mathcal C_3$ the three sets of connected diagrams defined in the beginning of this section, we can see that
\begin{itemize}
\item $b(C) = b(C_1) + 1$ for $C \in \mathcal C_1$;
\item  $b(C) = 2$ for $C \in \mathcal C_2$;
\item $b(C) = b(C_3) + 1$ for $C \in \mathcal C_3$.
\end{itemize}
A direct adaptation of the proof of Lemma \ref{lem:asympteq} shows then for $n \geq 2$, and $k \in \{3,\dots,n\}$
\begin{equation}
\prob{f_n = k} = \left(1 - \frac 1 n \right)\,\prob{f_{n-1} = k-1} + \frac 1 {2n}\,\prob{f_{n-2} = k-1} + O\left(n^{-2}\right),
\label{eq:fn}
\end{equation}
and \[\prob{f_n = 2} = \frac 1 {2n} +  O\left(n^{-2}\right).\]

We then define the numbers $g_{n,k}$ thanks to the recurrence
\begin{equation} g_{n,k} := \left(1 - \frac 1 n \right) g_{n-1,k-1} + \frac 1 {2n} g_{n-2,k-1}, \quad \quad g_{n,2} := \frac 1 {2n} 
\label{eq:propgn}
\end{equation}
for $n \geq 4$, and with initial conditions $g_{n,k} := \prob { f_n = k }$
for $n \in \ens{1,2,3}$. Remark by a straightforward induction that $\sum_{k \geq 0} g_{n,k} = 1$ for every integer $n$.

We start by proving that the expected values of $f_n$ and $g_{n,k}$ coincide asymptotically.

\begin{lem}
We have
\[\ev {f_n} - \sum_{k \geq 0}k\,g_{n,k} = o(n).\]
\label{lem:differon}
\end{lem}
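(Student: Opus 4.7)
The plan is to set $d_{n,k} := \prob{f_n = k} - g_{n,k}$ and analyze its first moment $D_n := \sum_{k \ge 0} k\, d_{n,k}$, which is exactly $\ev{f_n} - \sum_k k\, g_{n,k}$. Since both $(\prob{f_n = k})_k$ and $(g_{n,k})_k$ are probability distributions, the zeroth moment $S_n := \sum_k d_{n,k}$ vanishes identically, and the choice of initial conditions in \eqref{eq:propgn} gives $D_1 = D_2 = D_3 = 0$. Subtracting \eqref{eq:propgn} from \eqref{eq:fn} gives, for $n \ge 4$ and $k \ge 3$,
\[d_{n,k} = \bigl(1 - \tfrac{1}{n}\bigr) d_{n-1,k-1} + \tfrac{1}{2n}\, d_{n-2,k-1} + \varepsilon_{n,k},\]
while $d_{n,2} = O(n^{-2})$. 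Adapting the proof of Lemma~\ref{lem:asympteq} to $f_n$, the error $\varepsilon_{n,k}$ is a sum of $O(n^{-2})$ corrections to each $\prob{X_n \in \mathcal C_i}$ multiplied by probability mass, together with $\prob{X_n \notin \bigcup_i \mathcal C_i \text{ and } f_n = k}$; each contribution admits an $L^1$-in-$k$ bound of the same order, so $\sum_k |\varepsilon_{n,k}| = O(n^{-2})$.

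Multiplying the recurrence by $k$, summing over $k \ge 3$, shifting indices and using $S_{n-1} = S_{n-2} = 0$ to kill the zeroth-moment drift terms, I obtain
\[D_n = \bigl(1 - \tfrac{1}{n}\bigr) D_{n-1} + \tfrac{1}{2n}\, D_{n-2} + \tilde E_n,\]
where $\tilde E_n = 2 d_{n,2} + \sum_{k \ge 3} k\, \varepsilon_{n,k}$. Because all of these terms are supported in positions $k \le n$, the estimate $|\tilde E_n| \le 2|d_{n,2}| + n \sum_k |\varepsilon_{n,k}| = O(1/n)$ follows, giving a constant $C$ with $|\tilde E_n| \le C/n$ for $n \ge 4$.

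To close, set $K := 2C$ and induct on $n$. Assuming $|D_{n-1}|, |D_{n-2}| \le K$, we have
\[|D_n| \le \bigl(1 - \tfrac{1}{n}\bigr) K + \tfrac{1}{2n}\, K + \tfrac{C}{n} = \bigl(1 - \tfrac{1}{2n}\bigr) K + \tfrac{C}{n} \le K,\]
the last inequality using $K \ge 2C$. With $D_1 = D_2 = D_3 = 0$ as the base case, this yields $|D_n| \le K$ for every $n$, so $D_n = O(1) = o(n)$. The only delicate step is establishing $\sum_k |\varepsilon_{n,k}| = O(n^{-2})$; once this is in hand the contraction property (coefficients on the right summing to $1 - 1/(2n) < 1$) makes the rest routine and in fact yields a conclusion much stronger than the lemma asks for, namely $D_n = O(1)$ rather than merely $o(n)$.
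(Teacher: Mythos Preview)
Your argument is correct and in its first half matches the paper exactly: both derive, by subtracting \eqref{eq:propgn} from \eqref{eq:fn} and taking first moments, the recursion
\[
D_n = \Bigl(1-\tfrac1n\Bigr)D_{n-1} + \tfrac{1}{2n}\,D_{n-2} + O(n^{-1}),
\]
using that both $(p_{n,k})_k$ and $(g_{n,k})_k$ are probability distributions so that the zeroth-moment drift vanishes. Your justification of $\sum_k |\varepsilon_{n,k}| = O(n^{-2})$ (splitting the error into $O(n^{-2})$ perturbations of the $\prob{X_n\in\mathcal C_i}$ times probability mass, plus $\prob{X_n\notin\bigcup_i\mathcal C_i}$) is sound, and the bound $|\tilde E_n|\le n\sum_k|\varepsilon_{n,k}| + O(n^{-2})$ follows since the support is contained in $\{2,\dots,n\}$.

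Where you diverge from the paper is in closing the argument. The paper observes that $\varepsilon_n/n$ is bounded, picks a limit point $\ell$, and argues from the recursion that $\ell$ must be $0$; this step is somewhat informal, since substituting $\ell$ simultaneously for $\varepsilon_n/n$, $\varepsilon_{n-1}/(n-1)$ and $\varepsilon_{n-2}/(n-2)$ along a subsequence requires an extra word of justification. You instead exploit the contraction directly: the coefficients $(1-\tfrac1n)+\tfrac{1}{2n}=1-\tfrac{1}{2n}$ sum to strictly less than $1$, leaving exactly enough slack to absorb the $C/n$ error via the choice $K=2C$ and a clean induction. This is more elementary, fully rigorous, and yields the sharper conclusion $D_n=O(1)$ rather than merely $o(n)$.
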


\begin{proof}
Set $\varepsilon_n := \ev {f_n} - \sum_{k \geq 0}k\,g_{n,k}$. For $n \geq 2$, the law $f_n$ and the numbers $g_{n,k}$ have $\{2,\dots,n\}$ as a support, hence $\varepsilon_n = \sum_{k =2}^n k\,(\prob{f_n = k } - g_{n,k})$.

%Observe that for $j \in \{1,2\}$
%\[\sum_{k \geq 0} k (\prob{f_{n-j} = k-1} - g_{n-j,k-1}) = \sum_{k \geq 0} (k-1) (\prob{f_{n-j} = k-1} - g_{n-j,k-1}) + \sum_{k \geq 0} \prob{f_{n-j} = k-1} -  \sum_{k \geq 0} g_{n-j,k-1}  \]
Using Equations \eqref{eq:fn} and \eqref{eq:propgn}, we can then deduce that  
$\varepsilon_n = e_1 + e_2 + e_3$ for $n \geq 4$, where
\[e_1 := \left(1 - \frac 1 n\right)\,\sum_{k = 3}^n k \left( \prob {f_{n-1} = k - 1} - g_{n-1,k-1} \right)\]
\[e_2 := \frac 1 {2n}\, \sum_{k = 3}^n k \left( \prob {f_{n-2} = k - 1} - g_{n-2,k-1} \right) ,\]
\[e_3 := 2 (\prob{f_n = 2 } - g_{n,2}) + \sum_{k=2}^n O\left(n^{-2}\right) =   O\left(n^{-1}\right) ,\]
But we can note that
\begin{align*} \hspace*{-2pt}
e_1 \left(1 - \frac 1 n\right)^{-1} & = \sum_{k = 3}^n (k-1) \left( \prob {f_{n-1} = k - 1} - g_{n-1,k-1} \right) +  \sum_{k = 3}^n  \prob {f_{n-1} = k - 1} - \sum_{k = 3}^n  g_{n-1,k-1} \\
& = \varepsilon_{n-1} + 1 - 1 = \varepsilon_{n-1}.
\end{align*}
Similarly, $e_2 = \dfrac 1 {2n} \varepsilon_{n-2}$ so that
\[ \varepsilon_n = \left( 1 - \frac 1 n \right) \varepsilon_{n-1} + \frac 1 {2n} \, \varepsilon_{n-2} +  O\left(n^{-1}\right).\]

The sequence $\varepsilon_n/n$ is bounded (because $-n \leq f_n - g_n \leq n$), so has a limit point, let us say, $\ell$.
We have then
\[ \ell = \frac { (n-1)^2} {n^2} \ell + \frac {n-2} {2n^2} \ell +  O\left(n^{-2}\right).\]
The right-side member is asymptotically equivalent to $\ell - 3 \ell / (2n)$.  We must then have $\ell = 0$ so that this asymptotic estimate coincides with $\ell$. Consequently, the sequence $\varepsilon_n/n$ is bounded and have only one limit point, which is 0. Thus $\varepsilon_n/n$ tends to $0$, which means that $\ev {f_n} - \sum_{k \geq 0}k\,g_{n,k} = o(n)$.
\end{proof}

The next step is the explicit calculation of the generating function of the numbers $g_{n,k}$.

\begin{lem} The ordinary generating function $G(z,u)$ of the numbers $g_{n,k}$, namely
$\sum_{n,k \geq 0} g_{n,k}\,z^n\,u^k,$
is equal to
\begin{equation}
G(z,u) = e^{- \frac z 2} \left(1 - u\,z\right)^{- \frac 1 {2u}} \int_0^z  P(x,u) \, \frac  {e^{\frac x 2}} {1-x} \, \left(1 - u\,x\right)^{\frac 1 {2u} - 1} dx,
\label{eq:resG}
\end{equation}
where $P(x,u) = (1-x)(u+x\,u^2+ x^2\,u^2/4 +  x^2\,u^3/4) + x^3\,u^2/2$.
\label{lem:Gzu}
\end{lem}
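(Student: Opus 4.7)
The plan is to turn the recurrence \eqref{eq:propgn} into a first-order linear ODE for $G(z, u)$ in $z$ (treating $u$ as a parameter), and then solve it by the integrating-factor method. The ordinary generating function is the natural choice here, because multiplying the recurrence by $n$ converts the coefficient $1 - 1/n$ into a simple polynomial.

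First, rewrite \eqref{eq:propgn} as $n\, g_{n,k} = (n-1)\,g_{n-1,k-1} + \tfrac{1}{2}\,g_{n-2,k-1}$ (valid for $n \geq 4$, $k \geq 3$), multiply by $z^{n-1} u^k$, and sum over $n \geq 4$, $k \geq 3$. The three resulting sums agree, up to boundary terms, with $\partial_z G$, $uz\,\partial_z G$, and $\tfrac{uz}{2}\,G$ respectively, so $G$ satisfies an equation of the shape $(1 - uz)\,\partial_z G - \tfrac{uz}{2}\,G = R(z, u)$. The right-hand side $R(z, u)$ collects three kinds of correction: (i) the low-order terms with $n \leq 3$ or $k \leq 2$ missing from $\partial_z G$, which involve the initial values $g_{1,1} = 1$, $g_{2,2} = 1$, and the two values $g_{3,2}$ and $g_{3,3}$ (both computable by enumerating the $c_3 = 5$ connected chord diagrams of size $3$, and which will turn out to be $1/4$ and $3/4$); (ii) the index-shift corrections on the $(n-1)g_{n-1,k-1}$ and $\tfrac{1}{2}g_{n-2,k-1}$ sums, which pick up only the small contributions of $g_{1,1}$ and $g_{2,2}$; and (iii) the special rule $g_{n,2} = 1/(2n)$ for $n \geq 4$, which after multiplication by $\tfrac{n}{2}\,z^{n-1} u^2$ and summation yields $\sum_{n \geq 4} \tfrac{u^2}{2} z^{n-1} = \tfrac{u^2 z^3}{2(1-z)}$. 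This last sum is the origin of the factor $1/(1 - z)$ in \eqref{eq:resG}: after collecting all contributions, $R(z, u) = P(z, u)/(1 - z)$ with $P$ as stated. Pinning down $P$ exactly is the main obstacle, since the computation is entirely mechanical yet very sensitive to the low-order data---one wrong initial value breaks the announced closed form.

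Once the ODE $(1 - uz)\,\partial_z G - \tfrac{uz}{2}\,G = P(z, u)/(1 - z)$ is in hand, the resolution is classical. Dividing by $1 - uz$ and using the identity $\tfrac{uz}{1 - uz} = -1 + \tfrac{1}{1 - uz}$, one computes the integrating factor as $\mu(z, u) = e^{z/2}\,(1 - uz)^{1/(2u)}$. Multiplying through by $\mu$ yields
\[
\frac{d}{dz}\!\left[\mu(z, u)\,G(z, u)\right] \;=\; \frac{P(z, u)\,e^{z/2}}{1 - z}\,(1 - uz)^{1/(2u) - 1},
\]
and integrating from $0$ to $z$---the initial condition $G(0, u) = 0$ kills the constant of integration---before dividing by $\mu(z, u)$ produces exactly formula \eqref{eq:resG}.
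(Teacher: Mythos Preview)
Your approach is exactly the paper's: translate the recurrence into a first-order linear ODE in $z$ and solve by an integrating factor. The paper's proof is terse (``we can check that $G$ satisfies \ldots; we then verify \ldots''), while you spell out where each boundary term comes from; in fact your ODE $(1-uz)\,\partial_z G - \tfrac{uz}{2}\,G = P(z,u)/(1-z)$ is the one that is actually consistent with formula~\eqref{eq:resG}, so your extra care pays off here. One small slip: $c_3 = 4$, not $5$ (from $c_3 = 2(c_1 c_2 + c_2 c_1)$), though your values $g_{3,2}=1/4$ and $g_{3,3}=3/4$ are nonetheless correct since $b_{3,0}=3$.
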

\begin{proof}
Using \eqref{eq:propgn}, we can check that $G(z,u)$ satisfies the linear differential equation 
\[uzG(z,u) + 2 (1-uz) \pd G z (z,u) = \frac{u^2\,z^2}{1-z} + 2u+ 2u^2z+ \frac 1 2 u^2 z^2 - \frac 1 2 z^2 u^3.\]
We then verify that this differential equation is solved by \eqref{eq:resG}.
\end{proof}

\begin{proof}[Proof of Theorem \ref{theo:evfn}]
The sum $\sum_{k \geq 0} k g_{n,k}$ is the $n$th coefficient of the series $\pd G u (z,1)$, where $G$ is the generating function defined in Lemma \ref{lem:Gzu}. We are going to use the transfer theorem on $\pd G u (z,1)$. This series does not have a non-integral expression, but it is still possible to compute its singular expansion.

Write $G(z,u) = h_1(z,u) \int_0^z h_2(x,u) dx,$ 
where 
\[h_1(z,u) = e^{- \frac z 2} \left(1 - u\,z\right)^{- \frac 1 {2u}}, \quad \quad h_2(x,u) =  P(x,u) \, \frac  {e^{\frac x 2}} {1-x} \, \left(1 - u\,x\right)^{\frac 1 {2u} - 1}.\]
We have  \[\pd G u (z,1) = \pd {h_1} u (z,1) \int_0^z h_2(x,1) dx + h_1(z,1) \int_0^z \pd {h_2} u (x,1) dx.\] (Since we only have analytic functions, integration and differentiation with respect to $u$ are swappable.) One can explicitly compute the first part:
\[\pd {h_1} u (z,1) \int_0^z h_2(x,1) dx = \frac {z^2} 2 (1-z)^{-2} + \frac z 2 \ln(1-z) (1-z)^{-1}, \]
which is asymptotically equivalent to $(1-z)^{-2}/2$ when $z$ approaches $1$. Concerning the second part, we calculate $\pd {h_2} u (x,1)$ and observe that
\[\pd {h_2} u (x,1) \sim \frac {e^{\frac 1 2}} 4 (1-x)^{-5/2}.\] 
We then use Theorem VI.9 from \cite[p. 420]{flajolet-sedgewick} to integrate this expansion:
\[\int_0^z \pd {h_2} u (x,1) dx \sim \frac {e^{\frac 1 2}} 6 (1-z)^{-3/2},\]
and hence
\[h_1(z,1) \int_0^z \pd {h_2} u (x,1) dx \sim  \frac {1} 6 (1-z)^{-2}.\]

Finally we have $\pd G u (z,1) \sim \frac 2 3 (1-z)^{-2}$, so by the transfer theorem, we have  
\[ \sum_{k \geq 0} k g_{n,k} \sim \frac 2 3 n.\]
We conclude thanks to Lemma \ref{lem:differon}.
\end{proof}
%
%
%Theorem \ref{theo:evfn} results then from the combination of  and \ref{lem:evgn}.

\section{Conclusion}

In summary, this document establishes numerous exact and asymptotic results on connected chord diagrams. It shows how to compute the  next-to$\,^i$-leading log expansions, along with their asymptotic regimes. It also shows the Gaussian behaviour of many variables, like the number of terminal chords, and yields their means.

From a combinatorial point of view, this entire study is interesting on its own. It develops news methods to analyse parameters in a context which is not favourable to analytic combinatorics (\textit{a priori}). Moreover, it displays a non-Gaussian limit law, which seems to be new, and maybe deserves a deeper study.

Looking at this from a physical perspective, we observe the dominance of $f_0$ and $f_1$, which respectively denote the residue and the constant term of the Laurent expansion of the regularized Feynman integral of the one loop graph. This is particularly striking for the next-to$\,^i$-leading log expansions, whose asymptotic behaviour is governed by $f_0$ and $f_1$ (cf~\eqref{eq:f0f1}). But this dominance can also noted to a lesser extent to an unrestricted uniform distribution. In fact, by Corolla\-ries~\ref{cor:tn} and~\ref{cor:g1n}, the numbers $f_0$ and $f_1$ are on average exponentiated $n - \ln n$ and $\ln n/2$ times in the monomial $f_0^{|C|-k}\prod_{j = 2}^{k} f_{t_j-t_{j-1}}$, which leaves only $\ln n / 2$ extra factors for the other $f_i$ (always on average).

To have more information on these extra factors, it would be interesting to study the distribution of the gaps $t_j-t_{j-1}$ other than $1$. Conjecturally the number of $j$ such that $t_j-t_{j-1}=\ell$, where $\ell$ is fixed, asymptotically behaves like a Gaussian law with a mean and variance proportional to $\ln n/n ^{\ell-1}$. It should display a double regime: one is discrete -- a gap is equal to $1$ with a probability $1/2$; the other is continuous -- a gap conditioned to be different from $1$ should obey to a continuous limit law with mean $2\,n/\ln n$. The nature of the variance would be also interesting to know.

As for the next steps, the authors intend to generalize their results in the light of \cite{HYchord}. Specifically, the generalization should concern any number of primitives and Dyson-Schwinger equations of various shapes (including the QED shape).

%===============================
% BIBLIOGRAPHY
%===============================

\bibliographystyle{plain}
\bibliography{main}

\end{document}